\title{A variant of Mathias forcing that preserves~\ACA}
\author{Fran{\c c}ois G. Dorais}
\date{August 15, 2010\\{\small(Revised April 25, 2012)}}
\newcounter{counter}[section]
\theoremstyle{plain}
\newtheorem{corollary}[counter]{Corollary}
\newtheorem{lemma}[counter]{Lemma}
\newtheorem{proposition}[counter]{Proposition}
\newtheorem{theorem}[counter]{Theorem}
\theoremstyle{definition}
\newtheorem{definition}[counter]{Definition}
\newcommand{\N}{\mathbb{N}}
\newcommand{\R}{\mathbb{R}}
\newcommand{\MN}{\mathcal{N}}
\newcommand{\FN}{\mathcal{N}}
\renewcommand{\models}{\vDash}
\newcommand{\forces}{\Vdash}
\newcommand{\Sforces}{\Vdash_{S}}
\newcommand{\Hforces}{\Vdash_{H}}
\newcommand{\starforces}{\blacktriangleright}
\newcommand{\nstarforces}{\not\blacktriangleright}
\newcommand{\pow}[1]{\mathcal{P}(#1)}
\newcommand{\fin}[1]{\mathcal{P}_{<\infty}(#1)}
\newcommand{\tree}[1]{U(#1)}
\newcommand{\nbhd}[1]{[#1]}
\DeclareMathOperator{\dom}{dom}
\mathchardef\mhyphen="2D
\newcommand{\sub}{\mathop{\dot{\smash{-}}}}
\newcommand{\res}{{\upharpoonright}}
\newcommand{\seq}[1]{(#1)}
\newcommand{\set}[1]{\lbrace#1\rbrace}
\newcommand{\pair}[2]{\langle#1,#2\rangle}
\newcommand{\fst}[1]{\mathsf{1st}(#1)}
\newcommand{\snd}[1]{\mathsf{2nd}(#1)}
\newcommand{\FID}{\mathsf{Fin}}
\newcommand{\ACA}{\ensuremath{\mathsf{ACA}_0}}
\newcommand{\RCA}{\ensuremath{\mathsf{RCA}_0}}
\newcommand{\WKL}{\ensuremath{\mathsf{WKL}_0}}
\newcommand{\COH}{\ensuremath{\mathsf{COH}}}
\newcommand{\Ind}[1]{\ensuremath{\mathsf{I}{#1}}}
\newcommand{\Bnd}[1]{\ensuremath{\mathsf{B}{#1}}}
\newcommand{\lthen}{\rightarrow}
\newcommand{\liff}{\leftrightarrow}
\newcommand{\THEN}{\Longrightarrow}
\newcommand{\IFF}{\Longleftrightarrow}
\begin{document}

\maketitle

\begin{abstract}\noindent
  We present and analyze \(F_\sigma\)-Mathias forcing, which is similar but tamer than Mathias forcing. In particular, we show that this forcing preserves certain weak subsystems of second-order arithmetic such as \ACA\ and \WKL\ + \Ind{\Sigma^0_2}, whereas Mathias forcing does not. We also show that the needed reals for \(F_\sigma\)-Mathias forcing (in the sense of Blass~\cite{Blass}) are just the computable reals, as opposed to the hyperarithmetic reals for Mathias forcing.
\end{abstract}

\section*{Introduction}

Mathias forcing has recently received much attention in the reverse mathematics community for its use in the analysis of Ramsey's Theorem for pairs in subsystems of second-order arithmetic. Using a variant of Mathias forcing, Cholak, Jockusch, and Slaman~\cite{CholakJockuschSlaman} have shown that \(\mathsf{RT}^2_2\) is \(\Pi^1_1\)-conservative over \RCA\ + \Ind{\Sigma^0_2}. They have also shown that every computable coloring \(c:[\N]^2\to\set{0,1}\) has a low-2 homogeneous set. Similar methods have also been used by Dzhafarov and Jockusch~\cite{DzhafarovJockusch} to reprove a result of Seetapun~\cite{SeetapunSlaman} that every computable coloring \(c:[\N]^2\to\set{0,1}\) has a cone avoiding homogeneous set.

However, Blass~\cite{Blass} has shown that if \(G\) is a Mathias generic over a model of \(\mathsf{ZFC},\) then \(G\) computes all hyperarithmetic reals. Consequently, in order to use Mathias forcing in weak subsystems of second-order arithmetic, one must jump through several hoops in order to prevent the generic real from being too close to a true generic. The main purpose of the present paper to remedy this situation by introducing \(F_\sigma\)-Mathias forcing, which is a replacement for Mathias forcing that can safely be used over weak systems of second-order arithmetic. Unlike Mathias reals which compute every hyperarithmetic real, we show that \(F_\sigma\)-Mathias forcing can avoid computing any non-computable real. We also show that \(F_\sigma\)-Mathias forcing preserves \ACA\ as well as \WKL\ + \Ind{\Sigma^0_2}.

The plan of the paper is as follows:
\begin{itemize}
\item In Section~\ref{S:Submeasures} we develop the combinatorics of \(F_\sigma\)-ideals which are necessary to define \(F_\sigma\)-Mathias forcing. Inspired by work of Mazur~\cite{Mazur}, we will use lower semicontinuous submeasures to code \(F_\sigma\)-ideals. An interesting side result of this section is that under \WKL\ + \Bnd{\Sigma^0_2}, every free \(F_\sigma\)-ideal admits such a representation (Theorem~\ref{T:Mazur}).
\item In Section~\ref{S:Forcing}, we define \(F_\sigma\)-Mathias forcing and the associated forcing language and forcing relation. Although our definitions are specialized to \(F_\sigma\)-Mathias forcing, our methodology is very general and can be used to define the forcing language and forcing relation for a wide variety of forcings for adding a real.
\item Section~\ref{S:Witnessing} is the core of the paper. We will establish a series of \emph{witnessing theorems} for the forcing relation (Theorems~\ref{T:Pi2Witnessing} and~\ref{T:Pi3Witnessing}). These are results of the form if a condition forces a statement, then there is an extension of this condition that forces the Skolemization of the statement in question. Such results are instrumental to prove conservation results.
\item In Section~\ref{S:Extension}, we define the forcing extension and we establish preservation theorems for weak subsystems of second-order arithmetic. In particular, we show that if the ground model satisfies \ACA\ or \WKL\ + \Ind{\Sigma^0_2}, then so does the generic extension (Theorems~\ref{T:PresACA} and~\ref{T:PresIS2}, respectively). Partial results for weaker subsystems of second-order arithmetic are also presented (Propositions~\ref{P:PresRCA} and~\ref{P:PresWKL}).
\item In Section~\ref{S:Applications}, we present a few applications of \(F_\sigma\)-Mathias forcing. We first show that \(F_\sigma\)-Mathias generic sets are cohesive for sets in the ground model. Then we show that \(F_\sigma\)-Mathias generic sets can be forced to avoid cones. This extends some results of Dzhafarov and Jockusch~\cite{DzhafarovJockusch}.
\end{itemize}

\noindent
For the remainder of this section, we will present some background and conventions for the paper.

For the purpose of forcing, we will find it convenient to use a functional interpretation of the basic systems \RCA\ and \ACA. Our basic structures are of the form \(\MN = (\N,\FN_1,\FN_2,\dots)\) where \(\N\) is the underlying set and each \(\FN_k\) is a set of functions \(\N^k\to\N\) which together form an algebraic clone: each \(\FN_k\) contains all the constant functions, the projections \(\pi_i(x_1,\dots,x_k) = x_i,\) and if \(f \in \FN_\ell\) and \(g_1,\dots,g_\ell \in \FN_k\) then the superposition \(f(g_1(x_1,\dots,x_k),\dots,g_\ell(x_1,\dots,x_k))\) belongs to \(\FN_k.\) For convenience, we will often think of elements of \(\N\) as nullary functions and we will write \(\FN_0\) instead of \(\N\) when appropriate.

On top of this basic structure, we require closure under \emph{primitive recursion}: there are distinguished \(0 \in \N\) (zero) and \(\sigma \in \FN_1\) (successor) such that for any \(f \in \FN_{k-1}\) and \(g \in \FN_{k+1}\) there is a unique \(h \in \FN_k\) such that \[h(0,\bar{w}) = f(\bar{w}) \quad\mbox{and}\quad h(\sigma(x),\bar{w}) = g(h(x,\bar{w}),x,\bar{w})\] for all \(x, \bar{w} \in \N.\) Note that the uniqueness requirement on \(h\) is crucial since this is the only form of induction in our system. 

Using primitive recursion, we can define the usual arithmetic operations such as addition, multiplication, truncated subtraction (\(x\sub y = \max(x-y,0)\)) together with the usual identities between them. \[\begin{array}{c@{\qquad}c}
  x + (y + z) = (x + y) + z & x + y = y + x  \\ 
  x \cdot (y \cdot z) = (x \cdot y) \cdot z & x \cdot y = y \cdot x \\
  x + 0 = x = 0 + x & x \cdot 1 = x = 1 \cdot x \\
  (x + y) \sub y = x & x \sub (y + z) = (x \sub y) \sub z \\
  x\cdot(y+z) = x\cdot y + x \cdot z & x \cdot (y \sub z) = x \cdot z \sub y \cdot z \\
  x + (y \sub x) = y + (x \sub y) & x \sub (x \sub y) = y \sub (y \sub x)
\end{array}\]
Finally, we will assume the \emph{dichotomy axiom} \[x \sub y = 0 \lor y \sub x = 0,\] which is necessary to show that the relation \(x \leq y\) defined by \(x \sub y = 0\) is a linear ordering of \(\N.\)

Atop the basic axioms described above, we will consider two second-order axioms.
\begin{description}
\item[\normalfont\emph{Uniformization axiom}]\mbox{}\\ For every \(f \in \FN_{k+1}\) such that \(\forall \bar{w}\,\exists x\,{f(x,\bar{w}) = 0},\) there is a \(g \in \FN_k\) such that \(\forall\bar{w}\,{f(g(\bar{w}),\bar{w}) = 0}.\)
\item[\normalfont\emph{Minimization axiom}]\mbox{}\\ For every \(f \in \FN_{k+1}\) there is a \(g \in \FN_k\) such that \(\forall x,\bar{w}\,f(x,\bar{w}) \geq f(g(\bar{w}),\bar{w}).\)
\end{description}
Note that minimization implies uniformization. Uniformization ensures the existence of all general recursive functions; minimization ensures the existence of arithmetically defined functions.

Every functional structure \(\MN\) corresponds to a set-based structure \((\N;\pow{\N};0,1,{+},{\cdot})\) for second-order arithmetic as described in~\cite{Simpson}, where \(\pow{\N}\) consists of all subsets of \(\N\) whose characteristic function is in \(\FN_1.\) The latter structure is a model of \RCA\ if and only if the uniformization axiom holds in \(\MN\); it is a model of \ACA\ if and only if the minimization axiom holds in \(\MN.\) Conversely, given a traditional model \((\N;\pow{\N};0,1,{+},{\cdot})\) of \RCA, we can define \(\FN_k\) to be the class of all functions \(\N^k\to\N\) whose coded graph belongs to \(\pow{\N}\) and the resulting structure is a functional model which satisfies uniformization; a traditional model of \ACA\ similarly corresponds to a functional model which satisfies minimization. Since our choice to adopt functional models is a matter of convenience, we will freely use this translation between functional models and traditional models.

The fact that our basic axioms together with uniformization correspond to \RCA\ was observed by Kohlenbach~\cite{Kohlenbach} (where uniformization is denoted \(\mathrm{QF}\mhyphen\mathrm{AC}^{0,0}\)). Hirschfeldt and Shore noticed what is essentially the same fact in~\cite[Proposition~6.6]{HirschfeldtShore}. The fact that minimization corresponds to \ACA\ can be seen by using it to compute Turing jumps. 

For the remainder of this paper, we work inside a functional model \(\MN.\) Every result has in parentheses the assumptions that the model \(\MN\) needs to satisfy in order for the result to hold. For example, Theorem~\ref{T:Mazur} says that if \(\MN \models \WKL + \Bnd{\Sigma^0_2}\) then every free \(F_\sigma\)-ideal coded in \(\MN\) is the ideal of finite sets for some integer-valued lower semicontinuous sumbmeasure coded in \(\MN.\) To avoid confusion, we will use the term \emph{set} exclusively for collections of first-order objects, and the term \emph{class} exclusively for collections of second-order objects. Internal sets are identified with their characteristic functions.

\section{Submeasures and Free \(F_\sigma\)-Ideals}\label{S:Submeasures}

In this section, we will show how to recast and utilize the classical combinatorial concepts of \(F_\sigma\)-ideals and lower semicontinuous submeasures in second-order arithmetic. For this purpose, we will initially use classical set-theoretic terminology to discuss these objects. Our terminology will be classical for the most part, but the reader should keep in mind that we are working inside a functional model \(\MN.\) For example, the internal powerclass \(\pow{\N}\) should be understood to be the class of functions $\N\to\set{0,1}$ in \(\FN_1.\) Of course, these functions are identified with the subsets of \(\N\) that they characterize and they will be handled that way.

\begin{definition}\label{D:ID}
  A class \(\mathcal{J} \subseteq \pow{\N}\) is a \emph{free ideal} when it satisfies the following three conditions.
  \begin{enumerate}[\upshape(i)]
  \item\label{ID:mono} 
    If \(X \subseteq Y\) and \(Y \in \mathcal{J}\) then \(X \in \mathcal{J}.\)
  \item\label{ID:union} 
    If \(X_0,\dots,X_k \in \mathcal{J}\) (\(k \in \N\)) then \(X_0 \cup \cdots \cup X_k \in \mathcal{J}.\)
  \item\label{ID:free}
    For every \(n \in \N,\) \(\set{0,1,\dots,n} \in \mathcal{J}.\)
  \end{enumerate}
\end{definition}

\noindent
Condition~\eqref{ID:union} is to be understood as requiring \(\mathcal{J}\) to be closed under all internally finite unions, not just the truly finite ones. 

The smallest possible free ideal is the class of all internally finite sets. However, this class satisfies condition~\eqref{ID:union} only when \Bnd{\Sigma^0_2} holds. This is a general phenomenon for \(F_\sigma\)-ideals: \Bnd{\Sigma^0_2} is necessary to show that they are closed under internally finite unions. In the reverse direction, the Weak K{\"o}nig Lemma is often necessary to show that certain sets are internally finite unions of smaller sets. For these reasons, our base theory will generally be \WKL\ + \Bnd{\Sigma^0_2}, which is the minimum necessary to develop a sound theory of free \(F_\sigma\)-ideals.

A convenient way to encode free \(F_\sigma\)-ideals is via lower semicontinuous submeasures. Recall that a submeasure is a map \(\mu:\pow{\N}\to[0,\infty]\) such that \(\mu(\varnothing) = 0\) and \[\mu(X) \leq \mu(X \cup Y) \leq \mu(X) + \mu(Y)\] for all \(X, Y \subseteq \N.\) This map \(\mu\) is lower semicontinuous if the preimages \(\mu^{-1}[0,a]\) are all closed classes in \(\pow{\N}\) (endowed with the usual product topology). It follows that the ideal of \(\mu\)-finite sets \[\FID(\mu) = \bigcup_{n=0}^{\infty} \mu^{-1}[0,n]\] is an \(F_\sigma\)-ideal, and this ideal is free when \(\mu(\set{x}) < \infty\) for every \(x \in \N.\) 

Another convenient property of lower semicontinuous submeasures is that they are completely determined by their values on finite sets. Indeed, we always have
\begin{equation}\label{eq:sublimit}
  \mu(X) = \sup_{n \in \N} \mu(X \cap \set{0,\dots,n-1}).
\end{equation}
This allows us to code lower semicontinuous submeasures in second-order arithmetic.
In the following, we will use \(\fin{\N}\) to denote the set of all codes for internally finite sets (the encoding is immaterial so long as the basic operations are primitive recursive). We will sometimes abuse notation and identify finite sets, which are second-order objects, with their codes, which are first-order objects.

\begin{definition}
  A \emph{code for a} (\emph{lower semicontinuous}) \emph{submeasure} is a function \(\mu:\fin{\N}\to\R\) such that \(\mu(\varnothing) = 0\) and \[\max(\mu(x),\mu(y)) \leq \mu(x \cup y) \leq \mu(x) + \mu(y)\] for all \(x, y \in \fin{\N}.\) In other words, \(\mu:\fin{\N}\to\R\) is a monotone and subadditive function such that \(\mu(\varnothing) = 0.\)
\end{definition}

\noindent
Such a code naturally extends to a lower semicontinuous submeasure \(\mu\) defined by~\eqref{eq:sublimit}. This makes perfect sense in \ACA, but the supremum in~\eqref{eq:sublimit} does not necessarily exist in weaker systems. Nevertheless, one can always make sense of inequalities of the form \(\mu(X) \leq r,\) for any \(r \in [0,\infty],\) via \[\mu(X) \leq r \liff \forall n\,{\mu(X \cap \set{0,\dots,n-1}) \leq r}.\] Similar interpretations can be found for all other types of inequalities. In particular, \[\mu(X) < \infty \liff \exists m\,\forall n\,{\mu(X \cap \set{0,\dots,n-1}) \leq m}\] which allows us to define the class \(\FID(\mu)\) of \(\mu\)-finite sets even in \RCA.

\begin{proposition}[\RCA\ + \Bnd{\Sigma^0_2}]\label{P:Freeness}
  If \(\mu\) is a \textup{(}code for a\textup{)} submeasure, then the class \[\FID(\mu) = \set{X \subseteq \N : \mu(X) < \infty}\] is a free \(F_\sigma\)-ideal.
\end{proposition}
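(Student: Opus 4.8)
The plan is to verify the three defining conditions of a free ideal (Definition~\ref{D:ID}) for the class $\FID(\mu) = \set{X \subseteq \N : \mu(X) < \infty}$, working with the arithmetically defined notion $\mu(X) \leq r \liff \forall n\,{\mu(X \cap \set{0,\dots,n-1}) \leq r}$ throughout so that the argument goes through in \RCA. Recall that $\mu(X) < \infty$ unpacks to $\exists m\,\forall n\,{\mu(X \cap \set{0,\dots,n-1}) \leq m}$, which is a $\Sigma^0_2$ formula; this is why \Bnd{\Sigma^0_2} will be needed for the finite-union condition. Note also that condition~\eqref{ID:free} is immediate, since for each $n$ the finite set $\set{0,\dots,n}$ has $\mu(\set{0,\dots,n} \cap \set{0,\dots,k-1}) \leq \mu(\set{0,\dots,n})$ for all $k$, so $\mu(\set{0,\dots,n}) < \infty$ witnessed by $m = \mu(\set{0,\dots,n})$ (rounded up to an integer).

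For monotonicity~\eqref{ID:mono}, suppose $X \subseteq Y$ and $Y \in \FID(\mu)$, say $\mu(Y \cap \set{0,\dots,n-1}) \leq m$ for all $n$. Since $X \cap \set{0,\dots,n-1} \subseteq Y \cap \set{0,\dots,n-1}$ as finite sets, the monotonicity clause $\mu(x) \leq \mu(x \cup y)$ in the definition of a code for a submeasure — applied with $x = X \cap \set{0,\dots,n-1}$ and $y = (Y \setminus X) \cap \set{0,\dots,n-1}$, whose union is $Y \cap \set{0,\dots,n-1}$ — gives $\mu(X \cap \set{0,\dots,n-1}) \leq \mu(Y \cap \set{0,\dots,n-1}) \leq m$ for every $n$, so $X \in \FID(\mu)$. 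This step uses only the submeasure axioms and is purely quantifier-free once the witness $m$ is fixed.

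The main work, and the only place \Bnd{\Sigma^0_2} enters, is closure under internally finite unions~\eqref{ID:union}. Given $X_0,\dots,X_k \in \FID(\mu)$ coded by a single internally finite sequence, for each $i \leq k$ there is some $m_i$ with $\forall n\,{\mu(X_i \cap \set{0,\dots,n-1}) \leq m_i}$. The assertion ``$\forall i \leq k\,\exists m_i\,\forall n\,\dots$'' is $\Pi^0_1$-bounded over a $\Sigma^0_2$ matrix, so by \Bnd{\Sigma^0_2} there is a uniform bound $M$ with $m_i \leq M$ for all $i \leq k$; hence $\mu(X_i \cap \set{0,\dots,n-1}) \leq M$ for all $i \leq k$ and all $n$. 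Now apply subadditivity of the code $k$ times: for each $n$, writing $S = X_0 \cup \cdots \cup X_k$, $(S \cap \set{0,\dots,n-1}) = \bigcup_{i \leq k}(X_i \cap \set{0,\dots,n-1})$, so $\mu(S \cap \set{0,\dots,n-1}) \leq \sum_{i \leq k} \mu(X_i \cap \set{0,\dots,n-1}) \leq (k+1)M$. Thus $\mu(S) \leq (k+1)M < \infty$, witnessing $S \in \FID(\mu)$. The iterated subadditivity bound is proved by primitive recursion on $k$ (the only form of induction available), which is unproblematic since $k$ is a genuine natural number parameter.

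The main obstacle is purely the bounding step: without \Bnd{\Sigma^0_2} one cannot extract the uniform bound $M$ on the witnesses $m_i$, and indeed — as the surrounding discussion notes — the class of internally finite sets already fails condition~\eqref{ID:union} in the absence of \Bnd{\Sigma^0_2}, so this hypothesis is genuinely necessary and not an artifact of the proof. Everything else is a routine unwinding of the arithmetical definitions of the inequalities $\mu(X) \leq r$ and $\mu(X) < \infty$ together with the monotonicity and subadditivity clauses of a submeasure code. Finally, that $\FID(\mu)$ is an $F_\sigma$-ideal in the descriptive sense follows because each class $\set{X : \mu(X) \leq n}$ is (the interpretation of) a closed condition by lower semicontinuity and $\FID(\mu) = \bigcup_n \set{X : \mu(X) \leq n}$.
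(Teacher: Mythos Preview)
Your proof is correct and follows essentially the same route as the paper's: conditions~(i) and~(iii) are immediate from monotonicity of the code, and for~(ii) you apply \Bnd{\Sigma^0_2} to extract a uniform bound $M$ on the individual witnesses and then use subadditivity to get $\mu\bigl((X_0\cup\cdots\cup X_k)\cap\set{0,\dots,n-1}\bigr)\leq (k+1)M$. The paper's proof is terser (it dismisses~(i) and~(iii) as ``clear'') but the argument is the same.
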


\begin{proof}
  Conditions~\eqref{ID:mono} and~\eqref{ID:free} of Definition~\ref{D:ID} are clear. To verify~\eqref{ID:union}, suppose that \(X_0,\dots,X_k \in \FID(\mu).\) By \Bnd{\Sigma^0_2}, we can find an \(m \in \N\) such that \[\forall n\,\forall i \leq k\,{\mu(X_i \cap \set{0,\dots,n-1}) \leq m}.\] It follows from the subadditivity of \(\mu\) that \[\forall n\,\mu(X \cap \set{0,\dots,n-1}) \leq m(k+1),\] where \(X = X_0 \cup\cdots\cup X_k.\)
\end{proof}

It turns out that all free \(F_\sigma\)-ideals admit representations of this form. This was shown by K.~Mazur~\cite{Mazur} (assuming \(\mathsf{ZFC}\)); we show that the result goes through assuming only \WKL\ + \Bnd{\Sigma^0_2}.

\begin{theorem}[\WKL\ + \Bnd{\Sigma^0_2}]\label{T:Mazur}
  For every free \(F_\sigma\)-ideal \(\mathcal{J}\) there is an integer-valued submeasure \(\mu\) such that \(\mathcal{J} = \FID(\mu).\)
\end{theorem}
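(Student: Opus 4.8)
The plan is to adapt Mazur's argument to \WKL\ + \Bnd{\Sigma^0_2}. Unfolding the code of the \(F_\sigma\)-ideal, write \(\mathcal{J} = \bigcup_{n} C_n\) where each \(C_n\) is a closed class, coded by a tree in \(2^{<\N}.\) The heart of the argument is a preprocessing step: replace \((C_n)\) by a sequence \((C'_n)\) of \emph{hereditary} closed classes which is increasing, satisfies \(C'_n \cup C'_n \subseteq C'_{n+1}\) (i.e.\ \(A,B \in C'_n\) implies \(A \cup B \in C'_{n+1}\)), has \(\bigcup_n C'_n = \mathcal{J},\) and --- crucially --- has each \(C'_n \subseteq \mathcal{J}.\) One obtains this by the recursion \(C'_0 = \mathrm{hc}(C_0),\) \(C'_{n+1} = \mathrm{hc}\!\left(C_{n+1} \cup \{A \cup B : A,B \in C'_n\}\right),\) carried out entirely by primitive recursive operations on the coding trees: \(\mathrm{hc}(T)\) is the tree of all strings that are bitwise \(\leq\) some string of \(T\) of the same length (padded so that \(\varnothing\) is always a path), and the union of two closed classes and the ``pairwise union'' class of a closed class have equally explicit tree codes. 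The role of Weak K\"onig's Lemma here is to verify that these tree operations really do compute the intended closed classes --- for instance, that the paths of \(\mathrm{hc}(T)\) are exactly the subsets of paths of \(T\) --- which is trivial classically but in reverse mathematics would otherwise seem to force us to prune trees, and hence to use \ACA.

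Granting \((C'_n),\) set \(\mu(\varnothing) = 0\) and, for nonempty finite \(a,\) \(\mu(a) = 1 + \min\{k : a \in C'_{2^{k}}\}.\) This is total, since freeness of \(\mathcal{J}\) puts every finite set into \(\mathcal{J} = \bigcup_n C'_n,\) hence into some \(C'_{2^{k}}.\) Monotonicity of \(\mu\) (which subsumes the lower bound \(\max(\mu(a),\mu(b)) \le \mu(a \cup b)\)) is immediate from hereditariness of the \(C'_n,\) and \(\mu(\varnothing) = 0\) holds by fiat. For subadditivity, if \(\mu(a) = 1 + j \leq 1 + k = \mu(b)\) then \(a,b \in C'_{2^{k}},\) so \(a \cup b \in C'_{2^{k}+1} \subseteq C'_{2^{k+1}}\) and \(\mu(a \cup b) \leq k + 2 \leq \mu(a) + \mu(b)\); the exponential spacing of the indices is there precisely to absorb the off-by-one in \(C'_n \cup C'_n \subseteq C'_{n+1}.\) Thus \(\mu\) is a code for a submeasure, and it is integer-valued by construction.

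It remains to check \(\FID(\mu) = \mathcal{J}.\) For a hereditary closed class \(C,\) the conditions ``\(X \in C\)'' and ``every finite \(a \subseteq X\) lies in \(C\)'' are equivalent. Since also \(\mu(a) \leq K+1 \liff a \in C'_{2^{K}}\) for every finite \(a\) and every \(K,\) the condition \(\mu(X) < \infty\) --- some \(K\) with \(\mu(X \cap \{0,\dots,n-1\}) \leq K+1\) for all \(n\) --- is equivalent to \(X \in C'_{2^{K}}\) for some \(K,\) i.e.\ to \(X \in \bigcup_n C'_n = \mathcal{J}.\) So, modulo the preprocessing step, we are done.

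Consequently the main obstacle is the preprocessing step --- more precisely the inclusion \(C'_n \subseteq \mathcal{J}\) --- inside \WKL\ + \Bnd{\Sigma^0_2}. Classically this is immediate because \(\mathcal{J}\) is closed under hereditary closure and pairwise union, but the argument silently uses full sequential compactness. In the weak setting one argues instead that a set all of whose finite restrictions carry a ``small'' certificate of membership in the explicitly built \(C'_n\) is, by a Weak K\"onig's Lemma argument, an internally finite union of sets each lying in some \(C_m,\) whence in \(\mathcal{J}\) by condition~\eqref{ID:union} of Definition~\ref{D:ID}; here \Bnd{\Sigma^0_2} is what bounds the indices \(m\) --- and thereby the number of pieces --- produced in the reassembly. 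One must also take care that the relations \(a \in C'_n\) used to define \(\mu\) are genuinely decidable from the given code (which is sensitive to how \(F_\sigma\)-ideals are coded), and that \(C'_n \subseteq \mathcal{J}\) is verified instance by instance rather than by an induction on the recursion defining \((C'_n)\) that would outstrip \Bnd{\Sigma^0_2}.
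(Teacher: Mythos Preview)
Your approach is sound and genuinely different from the paper's. Where you first preprocess the closed classes into a hereditary increasing sequence with $C'_n \cup C'_n \subseteq C'_{n+1}$ and then read off $\mu$ by simple membership (using exponential spacing of indices to absorb the off-by-one), the paper builds subadditivity directly into $\mu$: it sets $\theta(x)$ to be the least $i$ with $x$ in (its analogue of) $C_i$ and then defines $\mu(x) = \min\{\theta(z_1) + \cdots + \theta(z_k) : z_1,\dots,z_k \text{ a partition of } x\}$, so that subadditivity is immediate and no recursion on the closed classes is needed. Both proofs invoke \WKL\ at essentially the same point --- to pass from ``every finite restriction of $X$ admits a bounded decomposition certificate'' to an actual decomposition of $X$ into internally finitely many pieces from $\bigcup_i C_i$ --- and both then finish via condition~(ii) of Definition~\ref{D:ID}. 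Your version is closer to Mazur's original; the paper's avoids the exponential blowup and the recursive tree construction at the cost of a slightly less transparent definition of $\mu$.

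Two small remarks. First, your subadditivity step tacitly assumes that ``$a \in C'_{2^k}$'' is stable under padding $\chi_a$ with trailing zeros (so that $a$ and $b$ can be compared at a common length before taking their bitwise OR); this is not automatic from ``bitwise $\leq$ some string of $T$ of the same length'' when $T$ has dead ends, but is easily arranged by comparing via $\sigma^{-1}(1) \subseteq \tau^{-1}(1)$ instead, which is in effect what the paper does by working with finite sets rather than strings throughout. Second, your attribution of \Bnd{\Sigma^0_2} to ``bounding the indices $m$'' in the reassembly is off: once $X \in C'_n$, both the indices and the number of pieces are already bounded by $n$ and $2^n$ from the depth of the recursion, so that step needs only \WKL\ and $\Sigma^0_1$-induction. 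In both arguments, \Bnd{\Sigma^0_2} is really there to make the statement meaningful --- it is what guarantees that $\FID(\mu)$ is an ideal (Proposition~\ref{P:Freeness}) --- rather than to drive the construction.
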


\noindent
For our purposes, we will need a slightly more general result which follows from Theorem~\ref{T:Mazur} but whose direct proof is essentially the same.

\begin{proposition}[\WKL\ +  \Bnd{\Sigma^0_2}]\label{P:Submeasure}
  Let \(\seq{T_i}_{i=0}^\infty\) be a sequence of binary trees \textup{(}i.e., codes for closed classes\textup{)}. There is an integer-valued submeasure \(\mu:\fin{\N}\to\N\) such that \(\FID(\mu)\) is the smallest free ideal that contains the \(F_\sigma\) class \(\bigcup_{i=0}^\infty [T_i].\) 
\end{proposition}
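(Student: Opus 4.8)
The plan is to construct $\mu$ directly as an infimum over "covers" of a finite set by pieces that are cheap to cover. Fix the sequence $\seq{T_i}_{i=0}^\infty$ of binary trees. For a finite set $x \in \fin{\N}$ and $i \in \N$, call $x$ an \emph{$i$-block} if there is some $\sigma \in T_i$ with $x \subseteq \set{j : \sigma(j) = 1}$, i.e., $x$ is contained in (the $1$-set of) some node of $T_i$; by König's Lemma this is equivalent to saying that $x$ is a subset of some element of $[T_i]$. Since $T_i$ is a tree this property is primitive recursive in $x$ and $i$ (one just checks finitely many nodes of $T_i$ at levels up to $\max(x)+1$). Now define, for $x \in \fin{\N}$,
\[
  \mu(x) = \min\set{ k+1 : x = y_0 \cup \cdots \cup y_k,\ \text{each } y_j \text{ is an } i_j\text{-block for some } i_j \leq \max(x) }
\]
together with $\mu(\varnothing) = 0$. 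The outer bound $i_j \le \max(x)$ (and the fact that there are at most $|x|$ pieces in a minimal cover) makes this a genuine minimum over a finite search, computable by an \ACA-style function of $x$; one must check this is available from \WKL\ $+$ $\Bnd{\Sigma^0_2}$ alone — it is, because the search range is bounded explicitly by $\max(x)$, so no $\Sigma^0_2$ minimization is actually needed, just a bounded search, which \RCA\ handles.

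Next I would verify that $\mu$ is a code for a submeasure: $\mu(\varnothing)=0$ by fiat; monotonicity ($x \subseteq y \Rightarrow \mu(x) \le \mu(y)$) because any cover of $y$ restricts to a cover of $x$ by intersecting each piece with $x$ (a subset of an $i$-block is an $i$-block, and $i_j \le \max(y)$ may exceed $\max(x)$, so here I must instead argue monotonicity by a short separate lemma, or — cleaner — drop the bound $i_j \le \max(x)$ from the definition and instead prove that the infimum is attained within that range, which restores monotonicity for free); and subadditivity $\mu(x \cup y) \le \mu(x) + \mu(y)$ by concatenating a cover of $x$ with a cover of $y$. Then I would show $\FID(\mu)$ is the smallest free ideal containing $\bigcup_i [T_i]$. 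For the inclusion that $\bigcup_i[T_i] \subseteq \FID(\mu)$: if $X \in [T_i]$ then every finite $X \cap \set{0,\dots,n-1}$ is an $i$-block, so $\mu(X \cap \set{0,\dots,n-1}) \le 1$ for all $n$, hence $\mu(X) \le 1 < \infty$; and $\FID(\mu)$ is a free $F_\sigma$-ideal by Proposition~\ref{P:Freeness}. For minimality, suppose $\mathcal{J}$ is any free ideal with $\bigcup_i[T_i] \subseteq \mathcal{J}$; I want $\FID(\mu) \subseteq \mathcal{J}$. Take $X$ with $\mu(X) < \infty$, say $\mu(X \cap \set{0,\dots,n-1}) \le m$ for all $n$. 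The point is that a finite set of $\mu$-value $\le m$ is a union of at most $m$ pieces, each a subset of some $[T_i]$, hence each in $\mathcal{J}$ by monotonicity, hence the finite set is in $\mathcal{J}$ by condition~\eqref{ID:union}. But $X$ itself is an increasing union of these finite sets, and a free ideal need not be closed under increasing unions — so the naive argument fails, and this is the crux.

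The main obstacle, then, is exactly this last step: recovering $X \in \mathcal{J}$ from the fact that all its finite initial segments lie in $\mathcal{J}$ with uniformly bounded "width" $m$. This is where \WKL\ enters. I would argue as follows: for each $n$, fix a minimal cover of $X \cap \set{0,\dots,n-1}$ into $\le m$ blocks with indices $\le n$; organize these choices into a finitely-branching tree (at stage $n$ we record which $\le m$ indices $i^{(n)}_1,\dots,i^{(n)}_m$ are used and, via the trees $T_{i}$, which node of each is witnessing), and apply König's Lemma to extract an infinite branch. This branch yields $m$ fixed indices $i_1,\dots,i_m$ and, for each $\ell$, an element $Y_\ell \in [T_{i_\ell}]$ such that $X = Y_1 \cup \cdots \cup Y_m$ (after passing to the limit of the witnessing nodes). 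Since each $Y_\ell \in [T_{i_\ell}] \subseteq \mathcal{J}$ and $\mathcal{J}$ is closed under finite unions, $X \in \mathcal{J}$. Some care is needed to set up the König's Lemma tree so that branches genuinely decode to a single assignment of indices valid at all levels simultaneously — this is a standard but slightly fiddly "stabilization" argument, and making it work with only $\Bnd{\Sigma^0_2}$ (rather than $\Ind{\Sigma^0_2}$) available for the bookkeeping is the delicate point. I expect the bounded search defining $\mu$ and the finite-union closure arguments to be routine; the König's Lemma extraction giving minimality is the heart of the proof and should be written out in full.
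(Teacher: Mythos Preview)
Your definition of $\mu$ has a genuine gap: it records only the \emph{number} of blocks in a cover, not the indices $i_j$ of the trees used, and this makes $\FID(\mu)$ strictly larger than the target ideal. Concretely, take $T_i$ to be the tree of all $\sigma\in 2^{<\infty}$ with $\sigma(j)=0$ for $j>i$, so that $[T_i]=\{Y:Y\subseteq\{0,\dots,i\}\}$ and the $i$-blocks are exactly the subsets of $\{0,\dots,i\}$. Then $\{0,\dots,n-1\}$ is itself an $(n-1)$-block with $n-1\le\max(\{0,\dots,n-1\})$, so your $\mu(\{0,\dots,n-1\})=1$ for every $n$, hence $\mu(\N)=1$ and $\N\in\FID(\mu)$; but the smallest free ideal containing $\bigcup_i[T_i]$ is just the ideal of finite sets. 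Your proposed fix (drop the bound on $i_j$ and argue the infimum is attained in that range) does not help---the counterexample works with or without the bound---and it also leaves $\mu$ undefined on any $x$ that no $T_i$ can see. The ``stabilization'' problem you flag in the K\"onig's Lemma step is exactly this phenomenon: at level $n$ the cover uses index $n-1$, so no branch can settle on fixed indices $i_1,\dots,i_m$.

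The paper repairs this by making $\mu$ charge for the tree index, not just the piece count. It sets $C_i$ to be the downward-closed set of finite $x$ with either $\max(x)<i$ or $x\subseteq\tau^{-1}(1)$ for some $\tau\in\bigcup_{j<i}T_j$, lets $\theta(x)$ be the least $i$ with $x\in C_i$ (so $\theta(x)\le\max(x)+1$ always), and defines $\mu(x)=\min\sum_\ell\theta(z_\ell)$ over partitions of $x$. Now $\mu(X)\le i$ simultaneously bounds the number of pieces by $i$ \emph{and} forces each piece into the single closed class $[C_i]$; the \WKL\ tree for the decomposition then lives over the fixed alphabet $\{0,\dots,i\}$ and is infinite simply because every level witnesses $\mu\le i$. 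A second \WKL\ application shows each infinite piece of $[C_i]$ sits inside some $[T_j]$ with $j<i$. Your overall architecture---define $\mu$ via minimal covers, then use \WKL\ to pass from uniform bounds on initial segments to a global decomposition---is the right one, but the cost function must couple the number of pieces to a bound on which trees may be used.
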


\begin{proof}
  Let \(C_0 = \set{\varnothing}\) and, for each \(i \geq 1,\) let \(C_i\) be the set of all \(x \in \fin{\N}\) such that either \(\max(x) < i,\) or \(x \subseteq \tau^{-1}(1)\) for some \(\tau \in \bigcup_{j<i} T_j.\) Note that if there is any such \(\tau\) then there is one with \(|\tau| = \max(x)+1,\) so this is really a finite search. Define \[[C_i] = \set{X \subseteq \N : \forall n\,{X \cap \set{0,\dots,n-1} \in C_i}}.\] Note that \([C_i]\) is a monotone closed subclass of \(\pow{\N}\) and that \(\set{0,\dots,i-1} \in [C_i],\) for every \(i.\)

  For \(x \in \fin{N},\) define \(\theta(x)\) to be the first \(i\) such that \(x \in C_i.\) Since we know that \(x \in C_{\max(x)+1},\) this is again a finite search. Now define \(\mu(\varnothing) = 0\) and \[\mu(x) = \min\set{\theta(z_1) + \cdots + \theta(z_k) : \mbox{$z_1,\dots,z_k$ a partition of $x$}}.\] Since there are only finitely many partitions of the finite set \(x,\) this is again a finite search. It is easy to check that \(\mu\) is (a code for) an integer-valued submeasure.

  To show that \(\FID(\mu)\) is the smallest free ideal that contains the \(F_\sigma\)-class \(\bigcup_{i=1}^\infty [T_i],\) we need three facts.

  \begin{lemma}[\RCA]\label{L:Submeasure:1}
    If \(X \in [C_i]\) then \(\mu(X) \leq i.\)
  \end{lemma}

  \begin{proof}
    For each \(n,\) we have \[\mu(X \cap \set{0,\dots,n-1}) \leq \theta(X \cap \set{0,\dots,n-1}) \leq i. \qedhere\]
  \end{proof}

  \begin{lemma}[\WKL]\label{L:Submeasure:2}
    If \(\mu(X) \leq i\) then there are \(X_1,\dots,X_i \in [C_i]\) such that \(X = X_1 \cup \cdots \cup X_i.\)
  \end{lemma}

  \begin{proof}
    Let \(S\) be the tree of all \(\sigma \in \set{0,\dots,i}^{<\infty}\) such that \(\dom(\sigma) - X = \sigma^{-1}(0)\) and \(\sigma^{-1}(1),\dots,\sigma^{-1}(i) \in C_i.\) It suffices to show that \(S\) is infinite.

    Given \(n,\) we know that \(\mu(X \cap \set{0,\dots,n-1}) \leq i.\) By definition of \(\mu,\) this means that there is a partition \(X \cap \set{0,\dots,n-1} = z_1 \cup \cdots \cup z_k\) such that \(\theta(z_1) + \cdots + \theta(z_k) \leq i.\) It follows that \(k \leq i\) and that \(z_1,\dots,z_k \in C_i.\) This immediately shows that \(S\) has an element with length \(n.\)
  \end{proof}

  Together, these two facts show that \(\FID(\mu)\) is the smallest free ideal that contains the \(F_\sigma\)-class \(\bigcup_{i=0}^\infty [C_i].\) The last fact relates this to the \(F_\sigma\)-class \(\bigcup_{i=0}^\infty [T_i].\)

  \begin{lemma}[\WKL]\label{L:Submeasure:3}
    If \(X \subseteq \N\) is infinite, then \(X \in [C_i]\) iff there are a \(j < i\) and a \(Y \in T_j\) such that \(X \subseteq Y.\)
  \end{lemma}

  \begin{proof}
    We only prove the forward implication since the converse is clear. Fix an \(i\) and let \(T = \bigcup_{j<i} T_j.\)

    Suppose \(X \in [C_i]\) is infinite and let \[T_X = \set{\tau \in T : X \cap \dom(\tau) \subseteq \tau^{-1}(1)}.\] By definition of \(C_i,\) this is an infinite subtree of \(T.\) If \(Y \in [T_X]\) then \(X \subseteq Y,\) so it suffices to show that \(Y \in [T_j]\) for some \(j < i.\) For each \(n\), let \(j_n\) be the first \(j < i\) such that \(\chi_Y \res n \in T_j.\) Since the trees \(T_j\) are downward closed, the sequence \(\seq{j_n}_{n=0}^\infty\) is nondecreasing. So there is a \(j < k\) such that \(j_n = j\) for all sufficiently large \(n\) and then \(Y \in T_j,\) as required.
  \end{proof}
  
  \noindent
  Note that we would need \Ind{\Sigma^0_2} to know exactly which of the sets \(X_1,\dots,X_i\) of Lemma~\ref{L:Submeasure:2} are infinite. However, this doesn't matter since any free ideal that contains \(\bigcup_{i=0}^\infty [T_i]\) must also contain all of the sets \(X_1,\dots,X_i.\) Therefore, \(\FID(\mu)\) is indeed the smallest free ideal that contains the \(F_\sigma\)-class \(\bigcup_{i=0}^\infty [T_i].\)
\end{proof}

Before we finish this discussion of submeasures and their associated ideals, note that the class of submeasures forms a lattice under the pointwise ordering. The joins and meets of this lattice are the following.

\begin{definition}\label{D:Lattice}
  Let \(\mu\) and \(\nu\) be two \textup(codes for\textup) submeasures on \(\N.\)
  \begin{itemize}
  \item \(\mu\vee\nu\) is the submeasure defined by \((\mu\vee\nu)(x) = \max(\mu(x),\nu(x))\) for all \(x \in \fin{\N}.\)
  \item \(\mu\wedge\nu\) is the submeasure defined by \[(\mu\wedge\nu)(x) = \min\set{\mu(y) + \nu(z): x = y \cup  z}\] for all \(x \in \fin{\N}.\)
  \end{itemize}
\end{definition}

\noindent
The meet operation will be very useful since it gives a way of computing the \(F_\sigma\)-ideal generated by two \(F_\sigma\)-ideals.

\begin{proposition}[\WKL\ + \Bnd{\Sigma^0_2}]\label{P:Meet}
  Let \(\mu\) and \(\nu\) be two submeasures on \(\N.\) Then \(\FID(\mu\wedge\nu)\) is the \(F_\sigma\)-ideal generated by \(\FID(\mu) \cup \FID(\nu).\) In other words, \((\mu\wedge\nu)(A) < \infty\) if and only if \(A = B \cup C\) where \(\mu(B) < \infty\) and \(\nu(C) < \infty.\)
\end{proposition}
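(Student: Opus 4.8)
The plan is to prove the ``in other words'' equivalence first and then read off the statement about ideals from it, together with Proposition~\ref{P:Freeness}. As a preliminary, I would note that $\mu\wedge\nu$ is indeed a code for a submeasure: applying the monotonicity and subadditivity of $\mu$ and $\nu$ to the pieces of an optimal cover of $x$ (resp.\ of $x\cup x'$) gives monotonicity and subadditivity of $\mu\wedge\nu$, and $(\mu\wedge\nu)(\varnothing)=0$ is immediate. This much is available in the basic theory.

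For the easy direction, suppose $A=B\cup C$ with $\mu(B)\le m$ and $\nu(C)\le m'$ (reading these inequalities through the finite-approximation interpretation). For each $n$, the sets $B\cap\set{0,\dots,n-1}$ and $(A\setminus B)\cap\set{0,\dots,n-1}\subseteq C\cap\set{0,\dots,n-1}$ form a cover of $A\cap\set{0,\dots,n-1}$, so by the definition of $\wedge$ and monotonicity of $\mu,\nu$ we get $(\mu\wedge\nu)(A\cap\set{0,\dots,n-1})\le m+m'$; hence $(\mu\wedge\nu)(A)\le m+m'<\infty$. This uses nothing beyond \RCA.

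The substantive direction is the converse, and it runs parallel to the proof of Lemma~\ref{L:Submeasure:2}, using \WKL\ to glue the finite decompositions together. Assume $(\mu\wedge\nu)(A)<\infty$ and fix $m$ with $(\mu\wedge\nu)(A\cap\set{0,\dots,n-1})\le m$ for all $n$. Let $S$ be the tree of all $\sigma\in\set{0,1}^{<\infty}$ such that, writing $B_\sigma=A\cap\sigma^{-1}(0)$ and $C_\sigma=A\cap\sigma^{-1}(1)$, both $\mu(B_\sigma)\le m$ and $\nu(C_\sigma)\le m$. Monotonicity of $\mu$ and $\nu$ makes $S$ closed under initial segments, and for each $n$ an optimal (disjoint) cover $y\cup z=A\cap\set{0,\dots,n-1}$ realizing $(\mu\wedge\nu)(A\cap\set{0,\dots,n-1})\le m$ --- which exists since there are only finitely many covers to search through --- yields, via $\sigma(k)=0$ for $k\in y$ and $\sigma(k)=1$ otherwise, an element of $S$ of length $n$. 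Thus $S$ is an infinite binary tree, so by \WKL\ it has a path $f$; setting $B=A\cap f^{-1}(0)$ and $C=A\cap f^{-1}(1)$ we get $A=B\cup C$ with $\mu(B)\le m$ and $\nu(C)\le m$, because $B\cap\set{0,\dots,n-1}=B_{f\res n}$ and $C\cap\set{0,\dots,n-1}=C_{f\res n}$ for every $n$.

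Finally, to obtain the ideal statement: by Proposition~\ref{P:Freeness} (this is where \Bnd{\Sigma^0_2} enters) $\FID(\mu\wedge\nu)$ is a free $F_\sigma$-ideal; it contains $\FID(\mu)$ and $\FID(\nu)$ since the cover $A=A\cup\varnothing$ gives $(\mu\wedge\nu)(A)\le\mu(A)$ and symmetrically; and it is the least ideal with this property, because any ideal $\mathcal J\supseteq\FID(\mu)\cup\FID(\nu)$ must, by the equivalence just proved, contain $B\cup C=A$ whenever $(\mu\wedge\nu)(A)<\infty$. I expect the \WKL\ gluing to be the only real obstacle, and within it the point requiring care is the infinitude of $S$, which rests on the fact that the minimum in the definition of $(\mu\wedge\nu)(A\cap\set{0,\dots,n-1})$ is attained by an actual finite cover.
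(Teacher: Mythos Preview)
Your proof is correct. The paper leaves this proposition to the reader, so there is nothing to compare against; the compactness argument via \WKL\ that you give, consciously modeled on Lemma~\ref{L:Submeasure:2}, is exactly what is intended and all the pieces (monotonicity to get downward closure of $S$, the finite search over covers to populate each level, and the reading-off of $B$ and $C$ from a path) are in order.

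One minor technicality you may want to make explicit: since $\mu$ and $\nu$ are real-valued rather than integer-valued, the defining condition ``$\mu(B_\sigma)\le m\ \land\ \nu(C_\sigma)\le m$'' for your tree $S$ is a priori only $\Pi^0_1$, so $S$ need not literally exist as a set in the model. The routine fix is to pass to the recursive tree that checks only the first $|\sigma|$ rational approximants of the relevant reals; this tree has the same infinite paths as $S$, and your argument that $S$ has a node at every level (via an optimal cover of $A\cap\set{0,\dots,n-1}$) carries over verbatim. This is the same manoeuvre implicit throughout Section~\ref{S:Submeasures}, and does not affect the substance of your argument.
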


\noindent
The proof of this fact is left to the reader. The fact that \(\FID(\mu\vee\nu) = \FID(\mu)\cap\FID(\nu)\) is also easy to check but we will have little use for it.

Finally, note that countable joins and meets
\[\bigvee_{n=0}^\infty \mu_n = \sup_{n \in \N} \mu_0\vee\cdots\vee\mu_n \quad\mbox{and}\quad \bigwedge_{n=0}^\infty \mu_n = \inf_{n \in \N} \mu_0\wedge\cdots\wedge\mu_n\] are always welldefined in \ACA, but they do not always correspond exactly to the expected operations on \(F_\sigma\)-ideals. The following fact (and variants) will be used regularly to unite a countable family of \(F_\sigma\)-ideals.

\begin{proposition}[\WKL\ + \Bnd{\Sigma^0_2}]\label{P:InfMeet}
  Let \(\seq{\mu_n}_{n=0}^\infty\) be a sequence of submeasures on \(\N.\) Then \(\FID(\mu)\) is the \(F_\sigma\)-ideal generated by \(\bigcup_{n=0}^\infty \FID(\mu_n),\) where \[\mu = \bigwedge_{n=0}^\infty \mu_n \vee n.\]
\end{proposition}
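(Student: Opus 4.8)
The plan is to prove three things about $\mu=\bigwedge_{n=0}^\infty\mu_n\vee n$: that $\FID(\mu)$ is a free $F_\sigma$-ideal; that $\FID(\mu_n)\subseteq\FID(\mu)$ for every $n$; and that $\FID(\mu)\subseteq\mathcal{J}$ whenever $\mathcal{J}$ is a free ideal with $\FID(\mu_n)\subseteq\mathcal{J}$ for all $n$. Together these say precisely that $\FID(\mu)$ is the smallest free ideal containing $\bigcup_{n=0}^\infty\FID(\mu_n)$, which (as in Proposition~\ref{P:Meet}) is what is meant by the $F_\sigma$-ideal generated by that union.

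First I would reduce to integer-valued submeasures and pin down the meaning of $\mu$, since arbitrary countable meets require \ACA. Replacing each $\mu_n$ by $x\mapsto\lceil\mu_n(x)\rceil$ — an integer-valued submeasure with the same $\mu$-finite sets, and which, by a short elementary calculation, leaves $\FID(\mu)$ unchanged as well — we may assume every $\mu_n:\fin{\N}\to\N$. Put $\nu_N=(\mu_0\vee 0)\wedge\cdots\wedge(\mu_N\vee N)$, so $\seq{\nu_N(x)}$ is non-increasing in $N$. The role of the ``$\vee n$'' is that in any cover $x=z_0\cup\cdots\cup z_N$ realizing $\nu_N(x)$ a nonempty piece $z_n$ contributes at least $n$ to the sum, so only pieces at indices $\le\nu_N(x)\le\nu_0(x)=\mu_0(x)$ can be nonempty; hence $\seq{\nu_N(x)}$ is constant from $N=\mu_0(x)$ on, and $\mu(x):=\inf_N\nu_N(x)=\nu_{\mu_0(x)}(x)$ is computed by a finite minimization. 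Thus $\mu$ is a genuine integer-valued submeasure code ($\mu(\varnothing)=0$, with monotonicity and subadditivity inherited from the finite meets $\nu_N$), and Proposition~\ref{P:Freeness} makes $\FID(\mu)$ a free $F_\sigma$-ideal. Finally $\mu\le\mu_n\vee n$ pointwise, as the countable meet lies below each of its factors, while $\FID(\mu_n\vee n)=\FID(\mu_n)$ because adding the constant $n$ does not affect which sets have finite mass; hence $\FID(\mu)\supseteq\FID(\mu_n)$ for every $n$. This disposes of the first two points.

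For the third, let $\mathcal{J}$ be a free ideal containing every $\FID(\mu_n)$, let $X\in\FID(\mu)$, and fix $m\in\N$ with $\mu(X)\le m$ (i.e.\ $\mu(X\cap\set{0,\dots,k-1})\le m$ for all $k$). I would run a compactness argument patterned on Lemma~\ref{L:Submeasure:2}: let $S$ be the tree of all $\sigma\in\set{\ast,0,1,\dots,m}^{<\infty}$ such that $\sigma(j)=\ast$ exactly when $j\notin X$ (for $j\in\dom(\sigma)$) and $\sum_{n=0}^m(\mu_n\vee n)(\sigma^{-1}(n))\le m$; since the $\mu_n$ are now integer-valued this is a decidable condition, and monotonicity of the $\mu_n\vee n$ makes $S$ downward closed. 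Moreover $S$ is infinite: for each $k$, writing $x_k=X\cap\set{0,\dots,k-1}$, the cover of $x_k$ achieving $\mu(x_k)\le m$ uses only indices $\le\mu(x_k)\le m$ (the contribution bound), so after padding it out to indices $0,\dots,m$ and disjointifying it defines an element of $S$ of length $k$. By the Weak K{\"o}nig Lemma $S$ has an infinite path $f$; setting $Y_n=f^{-1}(n)$ for $n\le m$ gives $X=Y_0\cup\cdots\cup Y_m$ with $(\mu_n\vee n)(Y_n)\le m<\infty$, hence $Y_n\in\FID(\mu_n\vee n)=\FID(\mu_n)\subseteq\mathcal{J}$. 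Since $\mathcal{J}$ is closed under internally finite unions, $X\in\mathcal{J}$, and therefore $\FID(\mu)\subseteq\mathcal{J}$, as required.

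The step I expect to be the crux is the infinitude of $S$, and within it the observation that bounded $\mu$-mass confines the witnessing covers to uniformly bounded (here $\le m$) indices: this is exactly where the ``$\vee n$'' device does its work, allowing the finitary decompositions to be amalgamated over a single finitely branching tree and keeping the whole construction inside \WKL\ rather than \ACA, which the naive $\bigwedge_n\mu_n$ would demand. The remaining ingredients are comparatively routine — well-definedness of $\mu$ as a submeasure code is the finite-search point above, and \Bnd{\Sigma^0_2} enters only through Proposition~\ref{P:Freeness}, to guarantee that $\FID(\mu)$ and the $\FID(\mu_n)$ really are closed under internally finite unions.
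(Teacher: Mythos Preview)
The paper does not supply a proof of Proposition~\ref{P:InfMeet}; like Proposition~\ref{P:Meet}, it is left to the reader. Your argument is correct and is precisely the one the paper's earlier material points toward: the observation that the finite meets $\nu_N(x)$ stabilize (because a nonempty piece at index $n$ contributes at least $n$ to the sum) is exactly why the ``$\vee n$'' device makes the countable meet available below \ACA, and your tree argument for the inclusion $\FID(\mu)\subseteq\mathcal{J}$ is a straightforward transcription of Lemma~\ref{L:Submeasure:2} to this setting.

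One minor caution: the passage to integer-valued submeasures via $x\mapsto\lceil\mu_n(x)\rceil$ is not quite licit as written, since the ceiling function is not total computable on reals in \RCA. The reduction is, however, unnecessary. For the stabilization you only need some integer $N_0\geq\mu_0(x)$, which can always be found from a rational approximation; and for the tree $S$ the condition $\sum_{n\leq m}(\mu_n\vee n)(\sigma^{-1}(n))\leq m$ is $\Pi^0_1$ in $\sigma$, so $S$ can be replaced by a recursive tree with the same infinite branches in the usual way. With these cosmetic adjustments your proof goes through unchanged.
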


\noindent
Here and henceforth, we use the constant \(n\) as a convenient abbreviation for the submeasure that assigns measure \(n\) to every nonempty set.

\section{\(F_\sigma\)-Mathias Forcing}\label{S:Forcing}

Having discussed how to handle free \(F_\sigma\)-ideals in second-order arithmetic, we are now ready to describe \(F_\sigma\)-Mathias forcing. Like traditional Mathias forcing, conditions contain a finite part \(a\) and an infinite part \(A \supseteq a.\) These represent the commitment that the generic \(G\) will satisfy \(a \subseteq G \subseteq A.\) In addition, \(F_\sigma\)-Mathias conditions contain a third component, a submeasure \(\mu\) such that \(\mu(A) = \infty.\) This last part represents the commitment that the generic \(G\) will be \(\mu\)-infinite.

\begin{definition}\label{D:FsigmaMathiasFocing}
  \(F_\sigma\)-Mathias forcing is defined as follows.
  \begin{itemize}
  \item Conditions are triples \((a,A,\mu)\) where \(a \in \fin{\N},\) \(a \subseteq A \subseteq \N,\) and \(\mu\) is a (code for a lower semicontinuous) submeasure such that \(\mu(A) = \infty.\) 
  \item The ordering is given by \((b,B,\nu) \leq (a,A,\mu)\) iff \(a \subseteq b \subseteq A,\) \(B \subseteq A,\) and \(\mu \leq \nu.\)
  \end{itemize}
\end{definition}

\noindent
Each condition \((a,A,\mu)\) represents the commitment that the generic real belongs to the \(G_\delta\) class \[\nbhd{a,A,\mu} = \set{B \subseteq \N : a \subseteq B \subseteq A \land \mu(B) = \infty}.\] Together, these classes \(\nbhd{a,A,\mu}\) form a basis for a topology on \(\pow{\N}.\) This topology is finer than the Ellentuck topology~\cite{Ellentuck}, which arises in the same manner for traditional Mathias forcing. We will refer to this finer topology as the \emph{Daguenet topology}, in honor of Maryvonne Daguenet who first studied the corresponding topology on \(\beta\N\)~\cite{Daguenet}.

Given a condition \((a,A,\mu)\) we define \[\tree{a,A} = \set{ \tau \in 2^{<\infty} : a \cap \dom(\tau) \subseteq \tau^{-1}(1) \subseteq A }.\] Note that \(\tree{a,A}\) is a tree and that \(\nbhd{a,A,\mu}\) is a dense \(G_\delta\) subclass of the closed class \([\tree{a,A}].\)

Like traditional Mathias forcing, \(F_\sigma\)-Mathias forcing satisfies Baumgartner's Axiom~A~\cite{Baumgartner}. For $s \in \N$, define \((b,B,\nu) \leq_s (a,A,\mu)\) if \((b,B,\nu) \leq (a,A,\mu),\) \(\nu \wedge s = \mu \wedge s,\) and \(\nu(b) \geq s.\) A sequence \(\seq{a_s,A_s,\mu_s}_{s=0}^\infty\) of conditions such that \((a_{s+1},A_{s+1},\mu_{s+1}) \leq_s (a_s,A_s,\mu_s)\) for each \(s\) is called a \emph{fusion sequence}. Given such a sequence, the infimum \(\mu = \bigwedge_{s=0}^\infty \mu_s\) is a well-defined submeasure and \(\mu(A) = \infty,\) where \(A = \bigcup_{s=0}^\infty a_s.\) Moreover, each \((a_s,A,\mu)\) is a condition such that \((a_s,A,\mu) \leq_s (a_s,A_s,\mu_s).\)

This Fusion Lemma differs slightly from Baumgartner's condition~(3). It is not difficult to modify our definition of the partial order ${\leq_s}$ to satisfy Baumgartner's definition, but we chose a version that works better in our more restricted context. Baumgartner's final condition~(4) doesn't make much sense in our context since it involves quantification over third-order objects; condition~(4$'$) is more meaningful but still awkward to work with. We haven't found a useful variant of this condition in our context, but Lemma~\ref{L:StarFusion} captures the useful part of this condition.

\subsection{Forcing Language}

We will now develop the basic machinery necessary to define the internal forcing language. The base level of this are the forcing names, which are the terms of the forcing language.

\begin{definition}\label{D:Names}
  A \emph{partial \(k\)-ary name} is a \(\Sigma^0_1\) set \(F \subseteq 2^{<\infty}\times\N^{k+1}\) such that:
  \begin{itemize}
  \item If \((\tau,\bar{x},y) \in F\) and \(\tau \subseteq \tau'\) then \((\tau',\bar{x},y) \in F.\)
  \item If \((\tau,\bar{x},y), (\tau,\bar{x},y') \in F\) then \(y = y'.\)
  \end{itemize}
  The \emph{domain of \(F\)} is the \(G_\delta\) class \[\dom(F) = \set{ X \subseteq \N : \forall\bar{x}\,\exists \tau,y\,(\tau \subseteq \chi_X \land (\tau,\bar{x},y) \in F) }.\] Given \(X \in \dom(F),\) the \emph{evaluation} \[F^X(\bar{x}) = y\ \IFF\ \exists \tau\,(\tau \subseteq \chi_X \land (\tau,\bar{x},y) \in F)\] is a total \(k\)-ary function.
\end{definition}

The reader will recognize these names as Turing functionals (relative to ground model oracles). The first coordinate should then be thought as finite amount of information from the generic real. When \(G\) is the \(F_\sigma\)-Mathias generic real and \(G \in \dom(F)\), the evaluation \(F^G\) is the intended interpretation of the name \(F\) in the generic extension. The basic projections, constants, and indeed all ground model functions \(F\) have \emph{canonical names} \(\check{F}\) defined by \[(\tau,\bar{x},y) \in \check{F}\ \IFF\ y = F(\bar{x}),\] which invariably evaluate to \(F.\)

In a typical language, the basic terms are composed to form the class of all terms. This is not so for the forcing language since composition and other operations can be done directly at the semantic level. If \(F\) is a partial \(\ell\)-ary name and \(F_1,\dots,F_\ell\) and are partial \(k\)-ary names then the superposition \(H = F (F_1,\dots,F_\ell)\) is defined by \[(\tau,\bar{x},z) \in H\ \IFF\ \exists \bar{y}\,((\tau,\bar{x},y_1) \in F_1 \land\cdots\land (\tau,\bar{x},y_\ell) \in F_\ell \land (\tau,\bar{y},z) \in F).\] This is a partial \(k\)-ary name and \[\dom(H) \supseteq \dom(F_1) \cap \cdots \cap \dom(F_\ell) \cap \dom(F).\] Primitive recursion can be handled in a similar way. Given partial a \((k-1)\)-ary name \(F_0\) and a \((k+1)\)-ary name \(F\), we the \(k\)-ary name \(H\) is defined by \((\tau,\bar{x},y,z) \in H\) iff there is a finite sequence \(\seq{z_0,\dots,z_y}\) with \(z = z_y\) such that \((\tau,\bar{x},z_0) \in F_0\) and \((\tau,\bar{x},i,z_i,z_{i+1}) \in F\) for every \(i < y\). This is a partial \(k\)-ary name and \[\dom(H) \supseteq \dom(F_0) \cap \dom(F).\] 
(Note that \(\Sigma^0_1\)-induction is necessary to establish this last fact, whereas none is needed to establish the corresponding fact for superposition.)
Other recursive operations will be handled later in Corollary~\ref{C:Sigma1Uniformization}. Meanwhile, we can proceed by defining the formulas of the forcing language.

\begin{definition}\label{D:Formulas}
  The \emph{formulas} of the forcing language are defined in the usual manner as the smallest family which is closed under the following formation rules.
  \begin{itemize}
  \item If \(F\) is a partial \(k\)-ary name, \( F'\) is a partial \(k'\)-ary name, and \(\bar{v} = v_1,\dots,v_k\), \(\bar{v}' = v_1',\dots,v_{k'}'\) are variable symbols then \(F(\bar{v}) = F'(\bar{v}')\) is a formula.
  \item If \(\phi\) is a formula then so is \(\lnot\phi.\)
  \item If \(\phi\) and \(\psi\) are formulas then so is \(\phi\land\psi.\)
  \item If \(\phi\) is a formula and \(x\) is a variable symbol, then \(\forall x\,\psi\) is also a formula.
  \end{itemize}
  Free and bound variables are defined in the usual manner. The \emph{sentences} of the forcing language are formulas without free variables.
\end{definition}

\noindent
Although not present in the formal language, we will often use \(\lor,\) \(\lthen,\) \(\liff\) and \(\exists\) as abbreviations:
\[\begin{array}{c@{\qquad}c}
  \phi \lor \psi \equiv \lnot(\lnot\phi\land\lnot\psi), &
  \phi \lthen \psi \equiv \lnot(\phi\land\lnot\psi), \\
  \phi \liff \psi \equiv (\phi\lthen\psi)\land(\psi\lthen\phi), &
  \exists v\,\phi \equiv \lnot\forall v\,\lnot\phi.
\end{array}\]
The language does not include second-order variable symbols. We will not have any need for second-order quantification, so it would be unnecessary tedium to introduce such variables.\footnote{We will use one second order variable in Section~\ref{S:Witnessing}, which will be handled in an \emph{ad hoc} manner. The interested reader can use this model to extend the language.}

Names are intended to represent functions in the generic extension. Not all names are equally meaningful in this way. The canonical names all have perfectly reasonable meaning, but the empty name has no reasonable interpretation. Locality is the notion that distinguishes meaningful names from pathological ones.

\begin{definition}\label{D:Locality}
  Let \((a,A,\mu)\) be an \(F_\sigma\)-Mathias condition.
  \begin{itemize}
  \item We say that \(F\) is an \emph{\((a,A,\mu)\)-local name} if \(\nbhd{b,B,\nu} \cap \dom(F) \neq \varnothing\) for every extension \((b,B,\nu) \leq (a,A,\mu).\)
  \item We say that \(\phi\) is an \emph{\((a,A,\mu)\)-local formula} of the forcing language if every name that occurs in \(\phi\) is \((a,A,\mu)\)-local.
  \end{itemize}
\end{definition}

\noindent
Note that \(F\) is \((a,A,\mu)\)-local precisely when \(\dom(F)\cap\nbhd{a,A,\mu}\) is dense in \(\nbhd{a,A,\mu}\) with respect to the Daguenet topology. Thus canonical names are always \((a,A,\mu)\)-local while the empty name is never \((a,A,\mu)\)-local.

\subsection{The Forcing Relation}

We are now ready to define the forcing relation. The definition for atomic sentences will be motivated when we discuss the forcing extension. The remaining cases follow the classical definition of forcing.

\begin{definition}\label{D:ForcingRelation}
  The forcing relation \((a,A,\mu) \forces \theta\) is defined by induction on the complexity of the \((a,A,\mu)\)-local sentence \(\theta\) as follows. Assume all names that occur in sentences below are \((a,A,\mu)\)-local.
  \begin{itemize}
  \item \((a,A,\mu) \forces F = F'\) iff, for all \(\tau \in \tree{a,A}\) and \(y, y' \in \N,\) if \((\tau,y) \in F\) and \((\tau,y') \in F'\) then \(y = y'.\)
  \item \((a,A,\mu) \forces \phi\land\psi\) iff \((a,A,\mu) \forces \phi\) and \((a,A,\mu) \forces \psi.\)
  \item \((a,A,\mu) \forces \forall v\,\phi(v)\) iff \((a,A,\mu) \forces \phi(x),\) for all \(x \in \N.\)
  \item \((a,A,\mu) \forces \lnot\phi\) iff there is no \((b,B,\nu) \leq (a,A,\mu)\) such that \((b,B,\nu) \forces \phi.\)
  \end{itemize}
\end{definition}

\noindent
The meaning of the forcing relation for the abbreviations defined above can be computed as usual.

\begin{proposition}[\RCA]\label{P:ForcingRelation}
  Assume all names that occur in the sentences below are \((a,A,\mu)\)-local.
  \begin{itemize}
  \item \((a,A,\mu) \forces F \neq F'\) iff, for all \(\tau \in \tree{a,A}\) and \(y,y' \in \N,\) if \((\tau,y) \in F\) and \((\tau,y') \in F'\) then \(y \neq y'.\)
  \item \((a,A,\mu) \forces \phi\lor\psi\) iff for every \((b,B,\nu) \leq (a,A,\mu)\) there is a \((c,C,\kappa) \leq (b,B,\nu)\) such that either \((c,C,\kappa) \forces \phi\) or \((c,C,\kappa) \forces \psi.\)
  \item \((a,A,\mu) \forces \phi\lthen\psi\) iff for every \((b,B,\nu) \leq (a,A,\mu)\) such that \((b,B,\nu) \forces \phi,\) there is a \((c,C,\kappa) \leq (b,B,\nu)\) such that \((c,C,\kappa) \forces \psi.\)
  \item \((a,A,\mu) \forces \exists v\,\phi(v)\) iff for every \((b,B,\nu) \leq (a,A,\mu)\) there are a \((c,C,\kappa) \leq (b,B,\nu)\) and a \(x \in \N\) such that \((c,C,\kappa) \forces \phi(x).\)
  \item \((a,A,\mu) \forces \lnot\lnot\phi\) iff \((a,A,\mu) \forces \phi.\)
 \end{itemize}
\end{proposition}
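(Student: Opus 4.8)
The plan is to split the five equivalences into the three purely formal ones and the two with content. For \(\phi\lor\psi\), \(\phi\lthen\psi\), and \(\exists v\,\phi(v)\) I would simply expand the abbreviations \(\phi\lor\psi\equiv\lnot(\lnot\phi\land\lnot\psi)\), \(\phi\lthen\psi\equiv\lnot(\phi\land\lnot\psi)\), and \(\exists v\,\phi\equiv\lnot\forall v\,\lnot\phi\), and then apply the \(\lnot\)-, \(\land\)-, and \(\forall\)-clauses of Definition~\ref{D:ForcingRelation} in succession. For instance, \((a,A,\mu)\forces\lnot(\phi\land\lnot\psi)\) unwinds to: there is no \((b,B,\nu)\le(a,A,\mu)\) that both forces \(\phi\) and has no extension forcing \(\psi\); contraposing the inner implication yields exactly the stated clause for \(\phi\lthen\psi\). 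The clauses for \(\lor\) and \(\exists\) are the same bookkeeping, and none of this uses anything beyond the definitions.

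For \(F\ne F'\) and \(\lnot\lnot\phi\) I would first establish, by a simultaneous induction on the structure of the local sentence \(\phi\), two standard forcing facts: \textbf{monotonicity}, i.e.\ \((a,A,\mu)\forces\phi\) and \((b,B,\nu)\le(a,A,\mu)\) imply \((b,B,\nu)\forces\phi\); and the \textbf{density lemma}, i.e.\ \((a,A,\mu)\forces\phi\) if and only if every \((b,B,\nu)\le(a,A,\mu)\) has an extension \((c,C,\kappa)\le(b,B,\nu)\) with \((c,C,\kappa)\forces\phi\). The inductive steps at \(\land\), \(\forall\), and \(\lnot\) are routine (for \(\land\) and \(\forall\) one uses that a family of conditions containing a family dense below a given condition is itself dense below that condition; for \(\lnot\) one uses monotonicity of the subformula). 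Granting the density lemma, the \(\lnot\lnot\)-clause is then immediate: applying the \(\lnot\)-clause of Definition~\ref{D:ForcingRelation} twice shows that \((a,A,\mu)\forces\lnot\lnot\phi\) is literally the assertion that every \((b,B,\nu)\le(a,A,\mu)\) has an extension forcing \(\phi\), which the density lemma identifies with \((a,A,\mu)\forces\phi\).

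The real content sits in the base case of this induction, which uses the same ingredients as the \(F\ne F'\)-clause, so I would prove that clause carefully. Suppose first that the stated condition holds: every \(\tau\in\tree{a,A}\) at which both \(F\) and \(F'\) are defined satisfies \(F(\tau)\ne F'(\tau)\). To see that no extension of \((a,A,\mu)\) forces \(F=F'\), fix \((b,B,\nu)\le(a,A,\mu)\). Since \(F\) is \((a,A,\mu)\)-local it is \((b,B,\nu)\)-local, so I can choose \(X\in\nbhd{b,B,\nu}\cap\dom(F)\) and hence a string \(\tau_0\subseteq\chi_X\) with \((\tau_0,y)\in F\); refining \((b,B,\nu)\) to \((b\cup\tau_0^{-1}(1),\,B\setminus\tau_0^{-1}(0),\,\nu)\) — again a condition, since deleting the finitely many points of \(\tau_0^{-1}(0)\) leaves the reservoir of infinite \(\nu\)-measure — and applying the locality of \(F'\) there, I obtain \(X'\) in the refined neighborhood with \(X'\in\dom(F')\) and a string \(\tau_1\subseteq\chi_{X'}\) extending \(\tau_0\) with \((\tau_1,y')\in F'\). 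Then \((\tau_1,y)\in F\) as well and \(\tau_1\in\tree{b,B}\), so \(y\ne y'\) by hypothesis and \((b,B,\nu)\) does not force \(F=F'\). Conversely, if the stated condition fails there is \(\tau\in\tree{a,A}\) with \((\tau,y)\in F\), \((\tau,y')\in F'\), and \(y=y'\); refining \((a,A,\mu)\) by \(\tau\) in the same way produces a condition whose associated tree contains only strings comparable with \(\tau\), on every one of which \(F\) and \(F'\) (where defined) take the common value \(y\); hence that condition forces \(F=F'\), and \((a,A,\mu)\nforces F\ne F'\). The density-lemma base case for atomic \(F=F'\) is handled by the same two devices: refining by a ``bad'' string \(\tau\) shows that a conflict, once present in \(\tree{a,A}\), propagates to every extension.

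I expect the two-stage use of locality above to be the main obstacle: a single initial segment of the generic need not witness convergence of both names, so one must pin down a segment on which \(F\) converges and only afterwards invoke the locality of \(F'\) \emph{relative to the refined condition}. Along the way I would also check a few routine closure facts — that refining a condition by a string \(\tau\in\tree{a,A}\) (adjoining \(\tau^{-1}(1)\) to the stem and deleting \(\tau^{-1}(0)\) from the reservoir) again gives a condition and an extension, and that the refined tree \(\tree{a\cup\tau^{-1}(1),\,A\setminus\tau^{-1}(0)}\) does consist only of strings comparable with \(\tau\) — but all of these are immediate from Definition~\ref{D:FsigmaMathiasFocing} and the definition of \(\tree{a,A}\).
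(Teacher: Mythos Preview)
Your proposal is correct and is exactly the kind of argument the paper has in mind; the paper itself gives no details beyond ``The verifications of the above are straightforward,'' and your unfolding of the abbreviations for \(\lor\), \(\lthen\), \(\exists\) together with the monotonicity/density-lemma induction for \(\lnot\lnot\) and the locality-plus-refinement argument for \(F\neq F'\) is the standard way to carry this out. In particular, your two-stage use of locality (pin down a value of \(F\), refine, then invoke locality of \(F'\) on the refined condition) and your observation that the tree of a \(\tau\)-refinement consists only of strings comparable with \(\tau\) are precisely the points that make the atomic cases go through.
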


\noindent
The verifications of the above are straightforward.

According to the definition given, the complexity of the forcing relation is highly complex even for simple sentences. For example, the complexity of \((a,A,\mu) \forces \exists w\,{F(w) = 0}\) is technically \(\Pi^1_2.\) We will spend much time reducing this complexity so that we can ``comprehend'' statements in the forcing extension from the ground model. First and foremost, we need to understand the \(\Pi^0_1\) forcing relation. In Proposition~\ref{P:Pi1Forcing}, we will show that this relation is itself \(\Pi^0_1,\) but first we handle the bounded forcing relation.

\begin{proposition}[\RCA]\label{P:BoundedForcing}
  For every bounded formula \(\phi(\bar{v})\) of the forcing language, there is a partial name \(T_\phi(\bar{v})\) such that, for every condition \((a,A,\mu),\) if \(\phi(\bar{v})\) is \((a,A,\mu)\)-local then so is \(T_\phi(\bar{v})\) and \[(a,A,\mu) \forces \forall\bar{v}\,(\phi(\bar{v}) \liff T_\phi(\bar{v}) = 0).\]
\end{proposition}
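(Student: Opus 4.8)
The plan is to argue by induction on the structure of the bounded formula $\phi(\bar v)$, producing $T_\phi$ explicitly --- uniformly in (a code for) $\phi$ --- so that $T_\phi(\bar v)$ ``computes the truth value'' of $\phi(\bar v)$, outputting $0$ for true and $1$ for false. For an atomic formula $F(\bar v)=F'(\bar v')$ I put $(\tau,\bar x,z)\in T_\phi$ just in case $(\tau,\bar x,y)\in F$ and $(\tau,\bar x',y')\in F'$ for suitable matching sub-tuples and some $y,y'$, with $z=0$ if $y=y'$ and $z=1$ otherwise; for $\lnot\psi$ I flip the output of $T_\psi$; for $\psi\land\psi'$ I take the maximum of the outputs of $T_\psi$ and $T_{\psi'}$; and for a bounded quantifier $\forall x<G(\bar v')\,\psi(x,\bar v)$ I put $(\tau,\bar x,z)\in T_\phi$ just in case $(\tau,\bar x',w)\in G$ for some $w$, there is for each $x<w$ a $z_x$ with $(\tau,(x,\bar x),z_x)\in T_\psi$, and $z=\max\set{z_x:x<w}$ (so $z=0$ when $w=0$). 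Each $T_\phi$ so defined is readily checked to be a $\Sigma^0_1$ set, functional, and closed upward in the string coordinate --- hence a partial name, with all outputs in $\set{0,1}$; in the bounded-quantifier clause one pulls the convergence witnesses outside the bounded quantifier to see the definition is $\Sigma^0_1$, which is the only place a little $\Sigma^0_1$-induction enters.

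Two things must be maintained through the induction: that $T_\phi$ is $(a,A,\mu)$-local whenever $\phi$ is, and that $(a,A,\mu)\forces\forall\bar v(\phi(\bar v)\liff T_\phi(\bar v)=0)$. A short structural induction shows that $\dom(T_\phi)$ contains the (finite) intersection of the domains $\dom(F)$ over all names $F$ occurring in $\phi$, so locality preservation reduces to a \emph{Density Lemma}: if $F_1,\dots,F_n$ are all $(a,A,\mu)$-local, then $\dom(F_1)\cap\dots\cap\dom(F_n)$ meets $\nbhd{b,B,\nu}$ for every $(b,B,\nu)\leq(a,A,\mu)$. For the forcing biconditional I would prove the stronger \emph{Claim}: for every bounded $\phi(\bar v)$, every condition $(b,B,\nu)$ at which all names occurring in $\phi$ are local, and every $\bar x$, one has $(b,B,\nu)\forces\phi(\bar x)$ iff $(b,B,\nu)\forces T_\phi(\bar x)=0$. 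The Proposition is immediate from this: given $(a,A,\mu)$ at which $\phi$ is local, the Claim applies at every extension, and because the two sides of the biconditional hold or fail \emph{at the very same condition}, the $\lthen$-clauses of Proposition~\ref{P:ForcingRelation} are met without passing to any extension, so $(a,A,\mu)$ forces $\phi(\bar x)\liff T_\phi(\bar x)=0$ for each $\bar x$ and hence the universal sentence.

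The common tool behind both is a \emph{Freezing Lemma}: if $F$ is a partial name and $\nbhd{a,A,\mu}$ contains a point at which $F$ converges at the input $\bar y$ (i.e.\ where $\exists\tau\subseteq\chi_X\,\exists z\,(\tau,\bar y,z)\in F$), then there is a condition $(b,B,\mu)\leq(a,A,\mu)$, \emph{with the same submeasure $\mu$}, whose entire basic cone $[\tree{b,B}]$ consists of points where $F$ converges at $\bar y$; one takes $b=\tau^{-1}(1)$ and $B=b\cup(A\setminus|\tau|)$ for a sufficiently long witnessing initial segment $\tau$, using that $B$ is cofinite in $A$ so $\mu(B)=\infty$, and that $[\tree{b,B}]=\set{Y:b\subseteq Y\subseteq B}$ forces $\chi_Y\res|\tau|=\tau$. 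Iterating this finitely shows, for local $F_1,\dots,F_n$ and inputs $\bar y_1,\dots,\bar y_n$, that below any condition there is a sub-condition whose whole cone lies in the intersection of the corresponding convergence classes; the Density Lemma then follows by a fusion over all (name, input) pairs, at each stage freezing around the current pair and then padding the finite part inside the (still unchanged) infinite part until its $\nu$-measure exceeds the stage, so that the union $X$ of the finite parts lies in $\nbhd{b,B,\nu}\cap\bigcap_i\dom(F_i)$. The essential point making this work over \RCA\ is that freezing never alters the submeasure: the fusion sequence has constant submeasure $\nu$, its ``infimum'' is just $\nu$, and nothing like \ACA\ (nor even the well-definedness of a genuine countable infimum) is needed; all searches involved are effective. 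A second, much-used consequence of freezing is that once one has frozen around a string $\tau$, that same $\tau$ lies in $\tree{d,D'}$ for \emph{every} further extension $(d,D',\lambda)$, since such a $d$ must agree with $\tau$ below $|\tau|$ and satisfy $d\subseteq D'$; this is the mechanism for carrying a ``bad'' witnessing string down an arbitrary tower of extensions.

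Granting these, the Claim follows by induction on $\phi$, unwinding Definition~\ref{D:ForcingRelation} and Proposition~\ref{P:ForcingRelation}. The atomic and conjunction cases are essentially transcriptions of the definitions (the conjunction case using one freezing together with the Density Lemma to pin both conjuncts' names down on a common string); the negation case combines the induction hypothesis at arbitrary extensions with the Density Lemma in one direction and a single freezing in the other. I expect the bounded-quantifier case to be the real obstacle. There, from a string $\tau\in\tree{b,B}$ with $(\tau,\bar x,1)\in T_\phi$ --- so $(\tau,\bar x',w)\in G$ and some $x_0<w$ receives output $1$ from $T_\psi$ at $\tau$ --- one freezes around $\tau$ to obtain a condition forcing ``$x_0<G(\bar x')$'' which, by the persistence of $\tau$, has no extension forcing $T_\psi(x_0,\bar x)=0$, hence (induction hypothesis) none forcing $\psi(x_0,\bar x)$, so $\phi(\bar x)$ is not forced; conversely, a failure of $\psi(x_0,\bar x)$ below a condition forcing ``$x_0<G(\bar x')$'' must be converted, by repeated freezing, into a single string at which $G$ converges at $\bar x'$ (to some value $w$) \emph{and} $T_\psi$ converges at every $(x,\bar x)$ with $x<w$, producing $(\tau,\bar x,1)\in T_\phi$ and contradicting the hypothesis that $T_\phi(\bar x)=0$ is forced. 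Coordinating the bound name $G$ with these finitely many instances of $T_\psi$, and keeping the variable sub-tuples straight in the atomic clause, are the fiddly points; everything else is bookkeeping.
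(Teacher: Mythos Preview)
Your proposal is correct and follows the same inductive strategy as the paper, which also defines $T_\phi$ by structural recursion on $\phi$ and then dismisses the verifications as ``straightforward.'' The paper's clauses differ only cosmetically from yours: it takes $T_\phi = |F-F'|$ for atomics, $1\sub T_\psi$ for negation, $T_\psi+T_\theta$ for conjunction, and $\sum_{w\leq F(\bar v)}T_\psi(\bar v,w)$ for bounded universal quantification, using the composition and primitive-recursion closure of names established just before Definition~\ref{D:Formulas} rather than building $T_\phi$ as a $\Sigma^0_1$ set by hand; these are interdefinable with your $\{0,1\}$-valued truth tables, and your remark about needing a little $\Sigma^0_1$-induction in the bounded-quantifier clause matches the paper's parenthetical after the primitive-recursion closure.

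The genuine content you add is the Freezing/Density machinery establishing that a composite of finitely many $(a,A,\mu)$-local names is again $(a,A,\mu)$-local. The paper never isolates this, and it is exactly what is hidden in the word ``straightforward'': one needs that $\nbhd{b,B,\nu}\cap\bigcap_i\dom(F_i)\neq\varnothing$ for every extension, which is a Baire-category statement about the Daguenet topology. Your constant-submeasure fusion (freeze, then pad the finite part to raise its $\nu$-measure, iterate over all name--input pairs) is the right \RCA-level substitute for Baire category, and a near-identical construction appears later in the paper's proof of Proposition~\ref{P:FunTotal} (second alternative), so you are on solid ground. Your reduction of the forcing biconditional to the stronger Claim, and the inductive verification of the Claim via freezing, are likewise correct elaborations of what the paper suppresses.
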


\begin{proof}
  The partial names \(T_\phi(\bar{v})\) are defined by induction on the complexity of \(\phi\) as follows.
  \begin{itemize}
  \item If \(\phi(\bar{v}) \equiv F(\bar{v}) = F'(\bar{v})\) then \(T_\phi(\bar{v}) = |F(\bar{v}) - F'(\bar{v})|.\)
  \item If \(\phi(\bar{v}) \equiv \lnot\psi(\bar{v})\) then \(T_\phi(\bar{v}) = 1 \sub T_\psi(\bar{v}).\)
  \item If \(\phi(\bar{v}) \equiv \psi(\bar{v}) \land \theta(\bar{v})\) then \(T_\phi(\bar{v}) = T_\psi(\bar{v}) + T_\theta(\bar{v}).\)
  \item If \(\phi(\bar{v}) \equiv \forall w \leq F(\bar{v})\,\psi(\bar{v},w)\) then \(T_\phi(\bar{v}) = \sum_{w \leq F(\bar{v})} T_\psi(\bar{v},w).\)
  \end{itemize}
  Verifications are straightforward.
\end{proof}

\noindent
We can now show that the \(\Pi^0_1\) forcing relation is \(\Pi^0_1.\) 

\begin{proposition}[\RCA]\label{P:Pi1Forcing}
  If \(\phi(\bar{v})\) is a \(\Pi^0_1\) formula of the forcing language, then there is a \(\Pi^0_1\) formula \(\widehat{\phi}(a,A;\bar{v})\) such that, for every condition \((a,A,\mu)\) and all \(\bar{x} \in \N,\) if \(\phi(\bar{x})\) is \((a,A,\mu)\)-local, then \[(a,A,\mu) \forces \phi(\bar{x})\ \IFF\ \widehat{\phi}(a,A;\bar{x}).\]
\end{proposition}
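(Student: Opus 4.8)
The plan is to peel off the single unbounded universal quantifier and reduce everything to the atomic forcing relation via Proposition~\ref{P:BoundedForcing}. Combining several universal quantifiers into one via a pairing function if necessary, we may assume $\phi(\bar v) \equiv \forall w\,\psi(\bar v, w)$ with $\psi$ bounded. Let $T = T_\psi$ be the partial name given by Proposition~\ref{P:BoundedForcing}: it is $(a,A,\mu)$-local whenever $\phi$ is, since $\psi$ contains the same names as $\phi$, and
\[(a,A,\mu)\forces\forall\bar v\,\forall w\,\bigl(\psi(\bar v,w)\liff T(\bar v,w)=\check 0\bigr),\]
where $\check 0$ is the canonical name for the constant $0$.

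Two easy lemmas about the forcing relation are needed. First, \emph{downward persistence}: if $(a,A,\mu)\forces\theta$ and $(b,B,\nu)\le(a,A,\mu)$ then $(b,B,\nu)\forces\theta$; this is immediate by induction on $\theta$ from Definition~\ref{D:ForcingRelation}, using $\tree{b,B}\subseteq\tree{a,A}$ in the atomic case. Second, the \emph{transfer property}: if $p\forces\alpha\liff\beta$ then $p\forces\alpha$ if and only if $p\forces\beta$; granting downward persistence this follows from the clauses for $\lthen$ and $\lnot\lnot$ in Proposition~\ref{P:ForcingRelation}. Instantiating the displayed sentence at $\bar x,z$ and applying the transfer property gives, for every condition $(a,A,\mu)$ at which $\phi(\bar x)$ is local and every $z$,
\[(a,A,\mu)\forces\psi(\bar x,z)\quad\IFF\quad(a,A,\mu)\forces T(\bar x,z)=\check 0.\]

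It remains to unwind both sides. By the $\forall$-clause of Definition~\ref{D:ForcingRelation}, $(a,A,\mu)\forces\phi(\bar x)$ iff $(a,A,\mu)\forces\psi(\bar x,z)$ for all $z\in\N$. By the atomic clause of Definition~\ref{D:ForcingRelation}, and since $(\tau,y)\in\check 0$ only when $y=0$, we get $(a,A,\mu)\forces T(\bar x,z)=\check 0$ iff $(\tau,\bar x,z,y)\in T$ implies $y=0$ for all $\tau\in\tree{a,A}$ and all $y$. Combining,
\[(a,A,\mu)\forces\phi(\bar x)\quad\IFF\quad\forall z\,\forall\tau\,\forall y\,\bigl(\tau\in\tree{a,A}\land(\tau,\bar x,z,y)\in T\ \lthen\ y=0\bigr),\]
and I would take the right-hand side as $\widehat\phi(a,A;\bar x)$. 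It is $\Pi^0_1$: the predicate $\tau\in\tree{a,A}$ is bounded (hence $\Delta^0_0$) in the parameters $a,A$, and $(\tau,\bar x,z,y)\in T$ is $\Sigma^0_1$ because $T$ is a name, so the matrix is $\Pi^0_1$ and survives the universal number quantifiers.

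The points needing (routine) attention are the locality bookkeeping — that $T(\bar x,z)$ and $\check 0$ are $(a,A,\mu)$-local so that the atomic clause applies, which holds because substituting numerals only enlarges the domain of a name and $\check 0$ is canonical — and the derivations of the two lemmas from the bare definitions. I expect the only mild obstacle to be extracting the transfer property cleanly from Definition~\ref{D:ForcingRelation} and Proposition~\ref{P:ForcingRelation}; the rest is a direct computation.
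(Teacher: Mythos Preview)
Your proof is correct and follows essentially the same route as the paper's: reduce the bounded matrix to the form \(T(\bar v,w)=0\) via Proposition~\ref{P:BoundedForcing}, then unwind the atomic and universal clauses of Definition~\ref{D:ForcingRelation} to obtain the displayed \(\Pi^0_1\) formula. The paper simply writes ``\(\phi(\bar v)\) in the form \(\forall w\,(T(\bar v,w)=0)\)'' and reads off the answer, tacitly using the transfer step you make explicit with your downward-persistence and transfer lemmas; your version is the more honest one.
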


\begin{proof}
  Write \(\phi(\bar{v})\) in the form \(\forall w\,(T(\bar{v},w) = 0),\) where \(T(\bar{v},w)\) is a partial name as in Proposition~\ref{P:BoundedForcing} for \(\phi_0(\bar{x},y).\) Then, by Definition~\ref{D:ForcingRelation}, \((a,A,\mu) \forces \phi(\bar{x})\) iff \[\forall \tau,y,z\,(\tau \in \tree{a,A} \land (\tau,\bar{x},y,z) \in T \lthen z = 0),\] which is the desired \(\Pi^0_1\) formula \(\widehat{\phi}(a,A;\bar{x}).\)
\end{proof}

\noindent
Note that the question of locality was conveniently factored out in Proposition~\ref{P:Pi1Forcing}. This is necessary since locality is generally \(\Pi^1_2.\) However, the the following lemma can be used to reduce the complexity of locality.

\begin{proposition}[\WKL\ + \Bnd{\Sigma^0_2}]\label{P:FunTotal}
  For every partial \(k\)-ary name \(F\) there is a sequence of submeasures \(\seq{\vartheta_{a}:a\in\fin{\N}}\) such that the following hold for every condition \((a,A,\mu).\)
  \begin{itemize}
  \item If \((\mu\wedge\vartheta_b)(A) < \infty\) for some \(a \subseteq b \subseteq A\) then there is an extension \((b,B,\mu) \leq (a,A,\mu)\) such that \(\nbhd{b,B,\mu} \cap \dom(F) = \varnothing.\) 
  \item If \((\mu\wedge\vartheta_b)(A) = \infty\) for every \(a \subseteq b \subseteq A\) then \(\nbhd{a,A,\mu} \cap \dom(F) \neq \varnothing.\) 
  \end{itemize}
\end{proposition}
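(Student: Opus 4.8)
The plan is to build the submeasures $\vartheta_b$ directly from the $\Sigma^0_1$ set $F$, so that $\vartheta_b$-finiteness of an infinite part certifies --- through a \emph{single} input at which $F$ stays undefined --- that $F$ is nowhere defined on the associated neighbourhood. Fix an enumeration $\seq{\bar{x}_i}_{i=0}^\infty$ of $\N^k$ and a primitive recursive enumeration of $F$. For a finite set $b$ and $i\in\N$, let $R^b_i\subseteq\fin{\N}$ be the set of all $\tau^{-1}(1)\setminus b$ such that $(\tau,\bar{x}_i,y)\in F$ for some $y$ and $b\cap\dom(\tau)\subseteq\tau^{-1}(1)$, together with a ``clock'' recording when each such set first appears. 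The role of $R^b_i$ is combinatorial: if $v\in R^b_i$ via $\tau$ and $v\subseteq B$, then $(b\cup\tau^{-1}(1),\,B\setminus\tau^{-1}(0),\,\mu)$ is a condition refining $(b,B,\mu)$ on which $F$ is defined at $\bar{x}_i$, the only change to the infinite part being deletion of the finite set $\tau^{-1}(0)$. I then take $T^b_i$ to be the tree of $\sigma\in 2^{<\infty}$ whose positive part $\sigma^{-1}(1)$ contains no $v\in R^b_i$ with clock at most $|\sigma|$; clocking makes the matrix primitive recursive, so this is a bona fide binary tree (uniformly in $b,i$) and $\nbhd{T^b_i}=\set{B:v\not\subseteq B\text{ for every }v\in R^b_i}$. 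Applying Proposition~\ref{P:Submeasure} to $\seq{T^b_i}_{i=0}^\infty$, uniformly in $b$, produces the required sequence $\seq{\vartheta_b:b\in\fin{\N}}$ of integer-valued submeasures with $\FID(\vartheta_b)$ the smallest free ideal containing $\bigcup_i\nbhd{T^b_i}$. The single fact to record: if $b\subseteq X\subseteq B$ and $B\in\nbhd{T^b_i}$ for some $i$, then $X\notin\dom(F)$ --- a string $\tau\subseteq\chi_X$ witnessing convergence of $F$ at $\bar{x}_i$ would place $\tau^{-1}(1)\setminus b$ in $R^b_i$ as a subset of $B$. So $B\in\nbhd{T^b_i}$ for a single $i$ already forces $\nbhd{b,B,\mu}\cap\dom(F)=\varnothing$.

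For the first clause I would argue as follows. Given $(\mu\wedge\vartheta_b)(A)<\infty$ with $a\subseteq b\subseteq A$, use Proposition~\ref{P:Meet} to write $A=B_0\cup C$ with $\mu(B_0)<\infty$ and $\vartheta_b(C)<\infty$; since $\mu(A)=\infty$, necessarily $\mu(C)=\infty$. Choosing $i$ with $\vartheta_b(C)\leq i$ and invoking the proof of Proposition~\ref{P:Submeasure} (Lemmas~\ref{L:Submeasure:2} and~\ref{L:Submeasure:3}), $C$ becomes an internally finite union $C_1\cup\cdots\cup C_i$ in which every infinite piece lies below a member of some $\nbhd{T^b_j}$, $j<i$. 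By subadditivity some $C_\ell$ has $\mu(C_\ell)=\infty$ and is therefore in $\nbhd{T^b_j}$ for some $j$; setting $B=C_\ell\cup b$ gives $b\subseteq B\subseteq A$, $\mu(B)=\infty$, and $B\in\nbhd{T^b_j}$ (the members of $R^b_j$ are disjoint from $b$, so adjoining $b$ cannot create a subset from $R^b_j$). Then $(b,B,\mu)\leq(a,A,\mu)$ and $\nbhd{b,B,\mu}\cap\dom(F)=\varnothing$, as needed.

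For the second clause, suppose $(\mu\wedge\vartheta_b)(A)=\infty$ for every $a\subseteq b\subseteq A$; I would produce $X\in\nbhd{a,A,\mu}\cap\dom(F)$ by a descending chain of conditions whose infinite part is always $A$ minus a finite set --- by subadditivity this stays of infinite $\mu$-measure and lets me keep $\mu$ fixed. Starting from the stem $a$, at stage $i$ with current stem $b_i$ and finite deletion set $G_i$ (so $a\subseteq b_i\subseteq A\setminus G_i$), I first ``reach'' $\bar{x}_i$: pick $v\in R^{b_i}_i$ with $v\subseteq A\setminus G_i$, let $\tau$ be a witness, move $\tau^{-1}(1)$ into the stem and $\tau^{-1}(0)$ into the deletion set; then I enlarge the stem by a finite subset of the remaining infinite part of $\mu$-measure exceeding $i+1$, obtaining $b_{i+1},G_{i+1}$. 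If the ``reach'' step ever fails at stage $i$, then $A\setminus G_i$ contains no member of $R^{b_i}_i$, i.e.\ $A\setminus G_i\in\nbhd{T^{b_i}_i}\subseteq\FID(\vartheta_{b_i})$, and since $G_i$ is finite Proposition~\ref{P:Meet} yields $(\mu\wedge\vartheta_{b_i})(A)<\infty$ --- contradicting the hypothesis at $b=b_i$. Hence the construction never stalls, and $X=\bigcup_i b_i$ has $a\subseteq X\subseteq A$, $\mu(X)=\infty$ (from the measure-increasing steps), and $X\in\dom(F)$ (at $\bar{x}_i$ the string chosen at stage $i$ is an initial segment of $\chi_X$, since its zero-positions were banished to a deletion set that no later stem re-enters, and none lay in $b_i$).

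The step I expect to be the real obstacle is the one inside the first clause: showing that $\vartheta_b$-finiteness of $C$ forces infinite $\mu$-measure \emph{into a single} $\nbhd{T^b_i}$, rather than merely spreading it across finitely many of them. A single missed input is what empties $\dom(F)$ from a neighbourhood, and it is precisely the decomposition-and-averaging distilled from Lemmas~\ref{L:Submeasure:2}--\ref{L:Submeasure:3} that promotes ``finitely many'' to ``one''. By comparison, arranging that each $\vartheta_b$ is genuinely a submeasure coded in the model (the clocking device) and checking that a stalled stage of the second-clause construction is a legitimate instance of the hypothesis should be routine.
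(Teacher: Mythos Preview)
Your proposal is correct and follows essentially the same approach as the paper. Your trees $T^b_i$ explicitly present the $F_\sigma$ class $\mathcal{B}_b = \{B : \exists\bar{x}\,\forall\tau,y\,(\tau\in\tree{b,B\cup b}\lthen(\tau,\bar{x},y)\notin F)\}$ that the paper feeds into Proposition~\ref{P:Submeasure}, and both clauses are argued in the same way (decompose via Lemmas~\ref{L:Submeasure:2}--\ref{L:Submeasure:3} for the first; build $X$ by a stem-and-deletion recursion for the second, using the hypothesis at $b=b_i$ to guarantee each stage succeeds).
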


\begin{proof}
  Use Proposition~\ref{P:Submeasure} to find a submeasure \(\vartheta_b\) such that \(\FID(\vartheta_b)\) is the free ideal generated by the \(F_\sigma\) class \[\mathcal{B}_b = \set{B \subseteq \N : \exists\bar{x}\,\forall \tau,y\,(\tau \in \tree{b,B \cup b} \lthen (\tau,\bar{x},y) \notin F)}.\] Since the procedure of Proposition~\ref{P:Submeasure} is uniform, the sequence \(\seq{\vartheta_b:b \in \fin{\N}}\) can be computed effectively.

  Suppose that \((a,A,\mu)\) is a condition such that \((\mu\wedge\vartheta_b)(A) < \infty\) for some \(b \supseteq a.\) Then there is a decomposition \(A = B_0 \cup B_1 \cup \cdots\cup B_k\) such that \(\mu(B_0) < \infty\) and \(B_1 \cup a,\dots,B_k \cup a \in \mathcal{B}_b.\) Since \(\mu(A) = \infty\) there must be a \(B_i\) with \(\mu(B_i) = \infty.\) Let \(B = B_i \cup a\) and find \(a \subseteq b \subseteq B\) and \(\bar{x}\) witnessing that \(B_i \in \mathcal{B}_{F,a}.\)

  Suppose that \((a,A,\mu)\) is a condition such that \((\mu\wedge\vartheta_b)(A) = \infty\) for all \(b \supseteq a.\) Define a sequence \(\seq{a_n,a'_n}_{n=0}^\infty\) of pairs of finite subsets of \(A\) by recursion. Let \(\seq{\bar{x}_n}_{n=0}^\infty\) enumerate \(\N^k.\) Initially, set \(a_0 = a\) and \(a'_0 = \varnothing.\) At stage \(n,\) first find \(\tau_n \in U(a_n,A-a'_n)\) and \(y_n \in \N\) such that \((\tau_n,\bar{x}_n,y_n) \in F\) and \(|\tau_n| \geq n.\) This is possible since \(\vartheta_{a_n}(A-a'_n) = \infty,\) which guarantees that \(A-a'_n \notin \mathcal{B}_{a_n}.\) Then set \(a'_{n+1} = a'_n \cup (\tau_n^{-1}(0) \cap A)\) and then pick \(a_{n+1}\) so that \(a_n \cup \tau_n^{-1}(1) \subseteq a_{n+1} \subseteq A-a'_{n+1}\) and \(\mu(a_{n+1}) \geq n+1.\) At the end of the construction, we have a set \(\bigcup_{n=0}^\infty a_n \in [a,A,\mu] \cap \dom(F).\)
\end{proof}

\noindent
Note that in the second alternative, we can form the submeasure \[\vartheta = \bigwedge_{a \subseteq b \subseteq A} (\vartheta_b \vee |b|)\] and then \((a,A,\mu\wedge\vartheta)\) is an extension of \((a,A,\mu)\) such that \(F\) is \((a,A,\mu\wedge\vartheta)\)-local. 

A similar trick as in Proposition~\ref{P:FunTotal} can be used to control the complexity of the \(\Pi^0_2\) forcing relation.

\begin{proposition}[\WKL\ + \Bnd{\Sigma^0_2}]\label{P:Pi2Decision}
  If \(\phi(w)\) is an \((a,A,\mu)\)-local \(\Pi^0_1\) formula of the forcing language, then there is an integer-valued submeasure \(\varrho\) with the following properties.
  \begin{itemize}
  \item If \((\mu\wedge\varrho)(A) < \infty\) then there are an extension \((b,B,\mu) \leq (a,A,\mu)\) and a \(y \in \N\) such that \((b,B,\mu) \forces \phi(y).\)
  \item If \((\mu\wedge\varrho)(A) = \infty\) then \((a,A,\mu\wedge\varrho) \forces \forall w\,\lnot\phi(w).\)
  \end{itemize}
\end{proposition}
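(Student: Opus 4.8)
The plan is to mimic the structure of Proposition~\ref{P:FunTotal} but working with the \(\Pi^0_1\) forcing relation instead of the domain of a name. Write \(\phi(w)\) in the form \(\forall u\,(T(w,u)=0)\) for a partial name \(T\) as furnished by Proposition~\ref{P:BoundedForcing}, so that, by Proposition~\ref{P:Pi1Forcing}, for any condition \((c,C,\kappa)\) with \(\phi(y)\) local we have \((c,C,\kappa)\forces\phi(y)\) iff the \(\Pi^0_1\) statement \(\widehat\phi(c,C;y)\) holds, i.e.\ iff \(\tau^{-1}(1)\subseteq C\) and \((\tau,y,u)\in T\) imply \(u=0\) for all \(\tau\in\tree{c,C}\). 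The key observation is that \((a,A,\mu)\) has an extension forcing \(\phi(y)\) for \emph{some} \(y\) exactly when \(A\) can be split off a \(\mu\)-infinite piece inside which \(\phi(y)\) is ``unrefuted''. So for each \(b\in\fin\N\) and \(y\in\N\), consider the \(F_\sigma\) class of all \(B\subseteq\N\) that contain, cofinally in their initial segments, the ``bad'' behaviour \(\exists \tau\,(\tau\in\tree{b,B\cup b}\land \exists u\,(\tau,y,u)\in T\land u\neq 0)\); feed the countable family of binary trees coding these classes (over all \(b,y\)) into Proposition~\ref{P:Submeasure} to get a single integer-valued submeasure \(\varrho\) whose ideal \(\FID(\varrho)\) is the smallest free ideal containing all of them. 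Since the construction of Proposition~\ref{P:Submeasure} is uniform, \(\varrho\) is obtained effectively; one may need to interleave a \(\vee|b|\) as in the remark after Proposition~\ref{P:FunTotal} to keep things free.

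Now argue the two alternatives. If \((\mu\wedge\varrho)(A)<\infty\), then by Proposition~\ref{P:Meet} we have \(A=B_0\cup B_1\cup\cdots\cup B_k\) with \(\mu(B_0)<\infty\) and each \(B_i\) (\(i\geq1\)) lying in one of the generating \(F_\sigma\) classes, witnessed by some pair \(b_i,y_i\); since \(\mu(A)=\infty\), some \(B_i\) has \(\mu(B_i)=\infty\). Setting \(B=B_i\cup b_i\), the witness tells us that every \(\tau\in\tree{b_i,B}\) with a defined \(T\)-value at \((y_i,\cdot)\) gives value \(0\) — more precisely, one builds \(B\) so that \(\tree{b_i,B}\) avoids all refuting \(\tau\) — and hence \(\widehat\phi(b_i,B;y_i)\) holds, so \((b_i,B,\mu)\forces\phi(y_i)\) after shrinking \(\mu\) if needed to keep \(\phi(y_i)\) local. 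Conversely, if \((\mu\wedge\varrho)(A)=\infty\), then for \emph{every} extension \((b,B,\nu)\leq(a,A,\mu\wedge\varrho)\) and every \(y\), the set \(B\) is \(\varrho\)-infinite and in particular does not belong to the \(b,y\)-generating \(F_\sigma\) class; unwinding, this means \(\tree{b,B}\) contains a \(\tau\) with some \((\tau,y,u)\in T\), \(u\neq0\), which is exactly a refutation of \((b,B,\nu)\forces\phi(y)\). Thus no extension of \((a,A,\mu\wedge\varrho)\) forces \(\phi(y)\) for any \(y\), i.e.\ \((a,A,\mu\wedge\varrho)\forces\forall w\,\lnot\phi(w)\) by Definition~\ref{D:ForcingRelation} together with Proposition~\ref{P:ForcingRelation}.

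The main obstacle I expect is pinning down the right \(F_\sigma\) classes so that membership in the generated ideal is genuinely equivalent to the existence of a forcing extension, \emph{and} doing the recursion in the ``\(\infty\)'' case correctly: just as in Proposition~\ref{P:FunTotal}, one constructs the witnessing generic-like set by a stage-by-stage recursion, at each stage using \(\varrho\)-infiniteness to find a refuting string \(\tau_n\), committing \(\tau_n^{-1}(0)\cap A\) to stay out and extending the finite part past \(\tau_n^{-1}(1)\) while keeping \(\mu\)-measure growing; the bookkeeping must ensure that the limit set simultaneously lies in \([a,A,\mu\wedge\varrho]\) and refutes \(\phi(y)\) for the relevant \(y\). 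The only subtlety beyond Proposition~\ref{P:FunTotal} is that here the ``bad event'' is the positive \(\Sigma^0_1\) fact \(\exists u\,((\tau,y,u)\in T\land u\neq0)\) rather than the negative fact that \(F\) is undefined, so locality of \(\phi\) (hence totality of the relevant instances of \(T\)) is what guarantees these bad events are actually found; everything else is a routine adaptation, and \Bnd{\Sigma^0_2} enters only through the uses of Propositions~\ref{P:Submeasure} and~\ref{P:Meet}.
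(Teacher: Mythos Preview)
Your overall strategy---use Proposition~\ref{P:Submeasure} to build \(\varrho\) from an appropriate \(F_\sigma\) class and then split into the two cases via Proposition~\ref{P:Meet}---is exactly right, and it is what the paper does. However, you have chosen the \emph{wrong} generating class, and as a result the logic in both of your cases is inverted.

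You generate \(\FID(\varrho)\) from the class of \(B\) for which there \emph{exists} a refuting string, i.e.\ some \(\tau\in\tree{b,B\cup b}\) with \((\tau,y,u)\in T\) and \(u\neq0\). Membership in that class means precisely that \(\widehat\phi(b,B\cup b;y)\) \emph{fails}, i.e.\ that \((b,B\cup b,\cdot)\) does \emph{not} force \(\phi(y)\). So in your second case, when you argue ``\(B\) is \(\varrho\)-infinite, hence not in the \((b,y)\)-class, hence \(\tree{b,B}\) contains a refuting \(\tau\),'' you have it backwards: not being in your class means there is \emph{no} refuting \(\tau\), so \((b,B,\nu)\) \emph{does} force \(\phi(y)\), the opposite of what you want. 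The same inversion bites in your first case: being in your class gives you a refuting \(\tau\), not the absence of one, so you cannot conclude \((b_i,B,\mu)\forces\phi(y_i)\). (There is a secondary problem: your class is \(\Sigma^0_1\) in \(B\), hence open rather than closed, so it does not feed directly into Proposition~\ref{P:Submeasure} as a sequence of trees.)

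The fix is to flip the class: let \(\mathcal{F}\) be the \(F_\sigma\) class of those \(B\subseteq A\) for which \(\widehat\phi(b,B\cup b;y)\) \emph{holds} for some \(a\subseteq b\subseteq A\) and some \(y\). Each slice \(\{B:\widehat\phi(b,B\cup b;y)\}\) is closed because \(\widehat\phi\) is \(\Pi^0_1\), so Proposition~\ref{P:Submeasure} applies. With this \(\varrho\), the second bullet becomes a one-line observation: if \((b,B,\nu)\leq(a,A,\mu\wedge\varrho)\) forced some \(\phi(y)\), then \(B\in\mathcal{F}\) and hence \(\varrho(B)<\infty\), contradicting \(\nu(B)=\infty\) via \(\nu\geq\mu\wedge\varrho\). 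In particular, no stage-by-stage recursion in the style of Proposition~\ref{P:FunTotal} is needed here; your third paragraph is solving a problem that does not arise once the class is chosen correctly. The first bullet then goes exactly as you wrote it, but now the witness \((b_i,y_i)\) legitimately certifies \(\widehat\phi(b_i,B_i\cup b_i;y_i)\), hence \((b_i,B_i\cup b_i,\mu)\forces\phi(y_i)\).
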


\begin{proof}
  Let \(\widehat{\phi}(a,A;\bar{v},w)\) be as in Proposition~\ref{P:Pi1Forcing}. Using the procedure of Proposition~\ref{P:Submeasure}, compute an integer-valued submeasure \(\varrho\) so that \(\FID(\varrho)\) is the smallest free ideal that contains the monotone \(F_\sigma\)-class \[\mathcal{F} = \set{B \subseteq A : \exists b, y\,(a \subseteq b \subseteq A \land \widehat{\phi}(b,B \cup b;y)}.\] Thus, in particular, if \((b,B,\nu) \leq (a,A,\mu)\) and \((b,B,\nu) \forces \phi(y)\) for some \(y \in \N,\) then \(\varrho(B) < \infty.\) The second statement follows immediately from this observation.

  For the first statement, note that if \((\mu\wedge\varrho)(A) < \infty\) then there are \(B_0 \in \FID(\mu)\) and \(B_1,\dots,B_k \in \mathcal{F}\) (\(k \in \N\)) such that \[A = B_0 \cup B_1 \cup \cdots \cup B_k.\] Since \(\mu(A) = \infty,\) there must be an \(i \in \set{1,\dots,k}\) such that \(\mu(B_i) = \infty.\) Then, by definition of \(\mathcal{F}\), we can find \(b \subseteq A\) and \(y \in \N\) such that if \(B = b \cup B_i\) then \((b,B,\mu) \leq (a,A,\mu)\) is a condition with \((b,B ,\mu) \forces \phi(y),\) as required.
\end{proof}

\section{Witnessing Theorems}\label{S:Witnessing}

The key to reducing the complexity of the forcing relation is to eliminate existential quantifiers by introducing witnessing terms. We will do this by producing names for Skolem terms (or dually Herbrand terms).

\subsection{Skolemization}

\begin{definition}\label{D:Skolem}
  The \emph{Skolemization} \(\theta_S(W;\bar{v})\) and the dual \emph{Herbrandization} \(\theta_H(W;\bar{v})\) of a formula \(\theta(\bar{v})\) of the forcing language are defined by induction on the complexity of \(\theta\) as follows. In all cases, the formal parameter \(W\) is a place holder for a unary name. We will use \(\lambda\)-notation to distinguish the parameter for \(W\) from true variable symbols of the ambient formula.
  \begin{itemize}
  \item If \(\theta(\bar{v})\) is atomic, then \[\theta_S(W;\bar{v}) \equiv \theta(\bar{v}) \equiv \theta_H(W;\bar{v}).\]
  \item If \(\theta(\bar{v}) \equiv \lnot\phi(\bar{v}),\) then \[\theta_S(W;\bar{v}) \equiv \lnot\phi_H(W;\bar{v}), \quad \theta_H(W;\bar{v}) \equiv \lnot \phi_S(W;\bar{v}).\]
  \item If \(\theta(\bar{v}) \equiv \phi(\bar{v}) \land \psi(\bar{v}),\) then \[\theta_S(W;\bar{v}) \equiv \phi_S(\lambda t\,W(2t);\bar{v}) \land \psi_S(\lambda t\,W(2t+1);\bar{v}),\] and \[\theta_H(W;\bar{v}) \equiv \phi_H(W;\bar{v}) \land \psi_H(W;\bar{v}).\]
  \item If \(\theta(\bar{v}) \equiv \forall w\,\phi(\bar{v},w)\) then \[\theta_S(W;\bar{v}) \equiv \forall w\,\phi_S(\lambda t\,W(\pair{w}{t});\bar{v},w),\] and \[\theta_H(W;\bar{v}) \equiv \phi_H(\lambda t\,W(t+1);\bar{v},W(0)).\]
  \end{itemize}
\end{definition}

\noindent
The point of the above definitions is that the Skolemization \(\phi_S(W;\bar{v})\) is essentially a \(\Pi^0_1\)-formula, up to the usual syntactic transformations; dually, the Herbrandization \(\phi_H(W;\bar{v})\) is essentially a \(\Sigma^0_1\)-formula.

\begin{proposition}[\RCA]\label{P:Pi1Skolem}
  If \(\phi(\bar{v})\) is a formula of the forcing language, then there is a \(\Pi^0_1\) formula \(\widetilde{\phi}(a,A,W;\bar{v})\) such that, for every condition \((a,A,\mu),\) every partial name \(W(\bar{v},t),\) and all \(\bar{x} \in \N,\) if \(\phi(\bar{x})\) and \(\lambda t\,W(\bar{x},t)\) are \((a,A,\mu)\)-local, then \[(a,A,\mu) \forces \phi_S(\lambda t\,W(\bar{x},t);\bar{x})\ \IFF\ \widetilde{\phi}(a,A,W;\bar{x}).\]
\end{proposition}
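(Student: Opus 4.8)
The plan is to prove Proposition~\ref{P:Pi1Skolem} by induction on the complexity of $\phi$, following the recursive structure of the Skolemization operator in Definition~\ref{D:Skolem}. The base case is the atomic one, where $\phi_S(W;\bar v)\equiv\phi(\bar v)$ does not mention $W$ at all, so we simply take $\widetilde\phi(a,A,W;\bar v)$ to be the $\Pi^0_1$ formula $\widehat\phi(a,A;\bar v)$ supplied by Proposition~\ref{P:Pi1Forcing}. (Strictly speaking atomic formulas $F(\bar v)=F'(\bar v')$ are already $\Pi^0_1$ in the sense needed, since $\widehat\phi$ was produced for $\Pi^0_1$ formulas and atomic formulas are a special case.) The formal parameter $W$ is carried along as an extra name-valued argument throughout; at each stage $\widetilde\phi$ will refer to the names $\lambda t\,W(\bar x,t)$ reindexed exactly as the corresponding clause of Definition~\ref{D:Skolem} reindexes $W$.

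For the inductive steps I would proceed clause by clause. For negation, $\theta_S(W;\bar v)\equiv\lnot\phi_H(W;\bar v)$; using the $\lnot\lnot$-elimination clause of Proposition~\ref{P:ForcingRelation} and the dual statement about Herbrandizations (which I would prove simultaneously: there is a $\Sigma^0_1$ formula $\check\phi(a,A,W;\bar v)$ capturing $(a,A,\mu)\forces\phi_H(\ldots)$, obtained by negating $\widetilde{(\lnot\phi)}$ and unwinding the $\lnot$-clause of Definition~\ref{D:ForcingRelation}), one gets $(a,A,\mu)\forces\theta_S$ iff no extension forces $\phi_H$, which by Proposition~\ref{P:ForcingRelation} (the $\lnot\lnot$ clause applied appropriately, or directly the $\lnot$ clause) is exactly the $\Pi^0_1$ negation of a $\Sigma^0_1$ formula, hence $\Pi^0_1$. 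For conjunction, $\theta_S$ is a conjunction of two Skolemizations with $W$ replaced by $\lambda t\,W(2t)$ and $\lambda t\,W(2t+1)$; since the forcing relation commutes with $\land$ (Definition~\ref{D:ForcingRelation}), $\widetilde\theta$ is the conjunction of the two inductively-obtained $\Pi^0_1$ formulas with the name argument reindexed, and a conjunction of $\Pi^0_1$ formulas is $\Pi^0_1$. For the universal quantifier, $\theta_S(W;\bar v)\equiv\forall w\,\phi_S(\lambda t\,W(\pair{w}{t});\bar v,w)$; since $(a,A,\mu)\forces\forall w\,\psi(w)$ iff $(a,A,\mu)\forces\psi(x)$ for all $x$, we set $\widetilde\theta(a,A,W;\bar v)\equiv\forall w\,\widetilde\phi(a,A,W';\bar v,w)$ where $W'$ is the reindexed name, and a $\Pi^0_1$ formula prefixed by a single universal number quantifier is still $\Pi^0_1$.

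The main obstacle is bookkeeping rather than mathematics: one must verify that the name reindexings $\lambda t\,W(2t)$, $\lambda t\,W(2t+1)$, $\lambda t\,W(\pair wt)$ are themselves partial names and preserve locality, and — more delicately — that the resulting $\widetilde\phi$ is genuinely $\Pi^0_1$ and not merely ``$\Pi^0_1$ up to syntactic transformations.'' Here the semantic-composition apparatus of Section~\ref{S:Forcing} does the work: reindexing by a primitive recursive function is a superposition $W\circ(\text{ground model function})$, which is again a partial name with domain containing $\dom(W)$, and locality is inherited because the relevant $[b,B,\nu]\cap\dom$ condition only becomes easier. The ``essentially $\Pi^0_1$'' caveat in the paper refers to the fact that we may need to prenex a block of universal number quantifiers and fold a bounded matrix into a single partial name via Proposition~\ref{P:BoundedForcing}; I would carry a slightly strengthened induction hypothesis stating that $\widetilde\phi$ has the explicit form $\forall\bar u\,(T(a,A,W;\bar v,\bar u)=0)$ for a partial name $T$, so that the quantifier and conjunction steps stay within this normal form. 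Because all the closure operations used (superposition, bounded sums, prenexing finitely many number quantifiers) require no induction beyond what is already available, the whole argument goes through in \RCA, matching the stated hypothesis.
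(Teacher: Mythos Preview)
Your negation case does not go through as written. The dual invariant you propose --- a $\Sigma^0_1$ meta-formula $\check\phi(a,A,W;\bar v)$ with $(a,A,\mu)\forces\phi_H(\lambda t\,W(\bar x,t);\bar x)\IFF\check\phi(a,A,W;\bar x)$ --- is false: $\phi_H$ is essentially $\Sigma^0_1$ \emph{as a formula of the forcing language}, but forcing a $\Sigma^0_1$ sentence is not a $\Sigma^0_1$ condition on the pair $(a,A)$. Your candidate $\check\phi=\lnot\widetilde{(\lnot\phi)}$ actually expresses ``some extension of $(a,A,\mu)$ forces $\phi_H$,'' not ``$(a,A,\mu)$ forces $\phi_H$'' (and defining it requires $\widetilde{(\lnot\phi)}$, which is what you are trying to produce). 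Even granting such a $\check\phi$, the step ``no extension forces $\phi_H$'' would read $\forall(b,B,\nu)\leq(a,A,\mu)\,\lnot\check\phi(b,B,W;\bar x)$, which carries a genuine second-order universal quantifier over $B$ and $\nu$; the $\lnot\lnot$-clause of Proposition~\ref{P:ForcingRelation} does nothing to remove it.

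The paper avoids unfolding the $\lnot$-clause of the forcing relation altogether. It makes one syntactic observation: every quantifier occurring in $\phi_S(W;\bar v)$ is universal and occurs positively (under an even number of negations). Bounding them all by a fresh variable $u$ produces a bounded formula $\psi(W;u,\bar v)$ of the forcing language with $(a,A,\mu)\forces\phi_S(W;\bar x)$ iff $(a,A,\mu)\forces\forall u\,\psi(W;u,\bar x)$, and now Proposition~\ref{P:Pi1Forcing} applies directly. All negations end up inside the bounded matrix, where Proposition~\ref{P:BoundedForcing} handles them harmlessly via $T_{\lnot\psi}=1\sub T_\psi$, so no quantification over extensions is ever introduced. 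Your induction can be repaired by carrying instead the pair of invariants ``$\phi_S$ is forcing-equivalent to a $\Pi^0_1$ formula of the forcing language'' and ``$\phi_H$ is forcing-equivalent to a $\Sigma^0_1$ formula of the forcing language''; the negation step then just swaps the two. But that is precisely the paper's syntactic observation proved by structural induction, and the global argument is shorter.
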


\begin{proof}
  Chasing through the cases of Definition~\ref{D:Skolem}, we see that the only quantifiers that occur in \(\phi_S(\lambda t\,W(\bar{x},t);\bar{v})\) are universal and that they all occur positively (i.e.\ within the scope of an even number of negations). Let \(\psi(\lambda t\,W(\bar{x},t);u,\bar{v})\) be the the bounded formula obtained from \(\phi_S(\lambda t\,W(\bar{x},t);\bar{v})\) by bounding all universal quantifiers with the fresh variable \(u.\) If \(\phi(\bar{x})\) and \(\lambda t\,W(\bar{x},t)\) are \((a,A,\mu)\)-local then so are \(\phi_S(\lambda t\,W(\bar{x},t);\bar{x})\) and \(\psi(\lambda t\,W(\bar{x},t);u,\bar{x})\) and \[(a,A,\mu) \forces \phi_S(\lambda t\,W(\bar{x},t);\bar{x})\ \IFF\ (a,A,\mu) \forces \forall u\,\psi(\lambda t\,W(\bar{x},t);u,\bar{x}).\] The result follows by defining \(\widetilde{\phi}(a,A,W;\bar{v})\) to be the \(\Pi^0_1\)-formula associated to \(\forall u\,\psi(\lambda t\,W(\bar{v},t);u,\bar{v})\) as in Proposition~\ref{P:Pi1Forcing}.
\end{proof}

\noindent
In view of this, the complexity of the forcing relation can be reduced by finding appropriate Skolem names. We will first do this for \(\Pi^0_2\) sentences in \RCA, then we will extend our results to sentences of higher arithmetical complexity in \ACA. First, let us introduce some convenient terminology for this task.

\begin{definition}
  The \emph{Skolemized forcing relation} \({\Sforces}\) and the \emph{Herbrandized forcing relation} \({\Hforces}\) are defined as follows. Let \(\theta\) be an \((a,A,\mu)\)-local sentence of the forcing language.
  \begin{itemize}
  \item \((a,A,\mu) \Sforces \theta\) holds iff \((a,A,\mu) \forces \theta_S(W)\) for some \((a,A,\mu)\)-local unary name~\(W.\)
  \item \((a,A,\mu) \Hforces \theta\) holds iff \((a,A,\mu) \forces \theta_H(W)\) for every \((a,A,\mu)\)-local unary name~\(W.\)
  \end{itemize}
\end{definition}

\noindent
These forcing-like relations are related to the forcing relation as follows.

\begin{proposition}[\RCA]
  If \(\theta\) is a \((a,A,\mu)\)-local sentence of the forcing language, then \[(a,A,\mu) \Sforces \theta \THEN (a,A,\mu) \forces \theta \THEN (a,A,\mu) \Hforces \theta.\]
\end{proposition}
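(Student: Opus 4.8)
The plan is to prove the two implications separately by induction on the complexity of the sentence $\theta$, with the atomic case trivial (since $\theta_S(W) \equiv \theta \equiv \theta_H(W)$ by Definition~\ref{D:Skolem}), and with the key observation being that the two chains interleave: proving the $\Sforces$-to-$\forces$ implication for $\lnot\phi$ requires the $\forces$-to-$\Hforces$ implication for $\phi$, and vice versa. So really I would prove, simultaneously by induction on $\theta$, the statement $P(\theta)$ asserting both that $(a,A,\mu) \Sforces \theta$ implies $(a,A,\mu) \forces \theta$ and that $(a,A,\mu) \forces \theta$ implies $(a,A,\mu) \Hforces \theta$, for every condition $(a,A,\mu)$ for which $\theta$ is local.

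For the negation case $\theta \equiv \lnot\phi$: to show $(a,A,\mu) \Sforces \lnot\phi$ implies $(a,A,\mu) \forces \lnot\phi$, unfold the definitions—we have an $(a,A,\mu)$-local $W$ with $(a,A,\mu) \forces \lnot\phi_H(W)$, i.e.\ no extension forces $\phi_H(W)$; if some extension $(b,B,\nu) \forces \phi$, then since $\phi$ is $(b,B,\nu)$-local and $W$ remains $(b,B,\nu)$-local, the inductive $\forces$-to-$\Hforces$ implication for $\phi$ gives $(b,B,\nu) \forces \phi_H(W)$, a contradiction. Dually, to show $(a,A,\mu) \forces \lnot\phi$ implies $(a,A,\mu) \Hforces \lnot\phi$: given any local $W$, we must see $(a,A,\mu) \forces \lnot\phi_S(W)$, i.e.\ no extension forces $\phi_S(W)$; but if $(b,B,\nu) \forces \phi_S(W)$ then by the inductive $\Sforces$-to-$\forces$ implication $(b,B,\nu) \forces \phi$, contradicting $(a,A,\mu) \forces \lnot\phi$.

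For conjunction $\theta \equiv \phi\land\psi$: on the $\Sforces$ side, from $(a,A,\mu) \forces \phi_S(\lambda t\,W(2t)) \land \psi_S(\lambda t\,W(2t+1))$ apply the inductive hypothesis to each conjunct (noting locality of $\lambda t\,W(2t)$ and $\lambda t\,W(2t+1)$ follows from that of $W$) to get $(a,A,\mu) \forces \phi$ and $(a,A,\mu) \forces \psi$, hence $(a,A,\mu)\forces\phi\land\psi$ by Definition~\ref{D:ForcingRelation}; on the $\Hforces$ side, given local $W$, apply the inductive $\forces$-to-$\Hforces$ implications to $\phi$ and $\psi$ separately and use $\theta_H(W) \equiv \phi_H(W)\land\psi_H(W)$. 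For the universal quantifier $\theta \equiv \forall w\,\phi(w)$: the $\Sforces$ direction uses that $(a,A,\mu) \forces \forall w\,\phi_S(\lambda t\,W(\pair{w}{t});w)$ means $(a,A,\mu) \forces \phi_S(\lambda t\,W(\pair{x}{t});x)$ for every $x$, so by induction $(a,A,\mu)\forces\phi(x)$ for all $x$, giving $(a,A,\mu)\forces\forall w\,\phi(w)$; the $\Hforces$ direction starts from $(a,A,\mu)\forces\forall w\,\phi(w)$, so $(a,A,\mu)\forces\phi(W(0))$—here $W(0)$ is evaluated at points of the generic, which requires a small argument that since $W$ is local, $(a,A,\mu)$ forces $\phi$ at the name-value $W(0)$, reducing to $(b,B,\nu)\forces\phi(y)$ on a dense set of extensions where $W(0)$ decides to $y$—and then the inductive $\forces$-to-$\Hforces$ step for $\phi$ yields $\phi_H(\lambda t\,W(t+1);W(0))$.

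The main obstacle is the universal-quantifier case on the Herbrandized side: passing from ``$(a,A,\mu)$ forces $\phi(x)$ for every standard $x$'' to ``$(a,A,\mu)$ forces $\phi(W(0))$'' where $W(0)$ is a name rather than a literal integer. One must argue that forcing $\phi_H$ at a name reduces, via density and the evaluation behaviour of local names (Definition~\ref{D:Names}, Definition~\ref{D:Locality}), to forcing at each concrete value that the name can take on an extension—so that locality of $W$ together with the inductive hypothesis applied uniformly over all possible values $y$ of $W(0)$ does the job. This is where care with the $\lambda$-notation and the substitution of names for variables in Definition~\ref{D:Skolem} is needed; everything else is a routine unwinding of the definitions.
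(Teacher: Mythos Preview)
The paper does not supply a proof of this proposition; it is stated and immediately followed by the remark that the converses generally fail, the forward implications being regarded as routine. Your simultaneous induction is the natural way to fill in the details, and you have correctly isolated the only step that is not a pure definition-chase: the Herbrandized side of the universal quantifier, where the instantiation $W(0)$ is a name rather than an element of $\N$.

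Your sketch for that step is sound but would benefit from isolating a clean substitution lemma: if an extension $(c,C,\kappa)$ decides the local nullary name $F$ to the value $y$ (i.e., $(c,C,\kappa) \forces F = \check{y}$), then $(c,C,\kappa) \forces \psi(F)$ if and only if $(c,C,\kappa) \forces \psi(y)$, proved by a separate induction on $\psi$. Combined with locality of $W$ (to produce such deciding extensions densely below any condition) and the $\lnot\lnot$-elimination clause of Proposition~\ref{P:ForcingRelation}, this gives the passage from ``$(a,A,\mu) \forces \phi(x)$ for all $x \in \N$'' to ``$(a,A,\mu) \forces \phi(W(0))$'', after which your inductive hypothesis for $\phi$ applies. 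An equivalent packaging is to state the inductive hypothesis from the outset with arbitrary local nullary names substituted for the free variables of $\theta$, rather than just elements of $\N$; the $\forall$-Herbrand step is then immediate, and the extra generality costs nothing in the other cases. Either way, the argument goes through as you describe.
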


\noindent
The converses of the above implications do not generally hold. A result that establishes a partial converse to one of the above is called a \emph{witnessing theorem}. In the remainder of this section, we will prove some standard witnessing theorems in \RCA\ and \ACA.

\subsection{Witnessing in \RCA}\label{S:Witnessing:RCA}

Our first main result of this section is that witnessing \(\Pi^0_2\) sentences is automatic in \RCA.

\begin{theorem}[\RCA]\label{T:Pi2Witnessing}
  If \(\theta\) is an \((a,A,\mu)\)-local \(\Pi^0_2\) sentence of the forcing language, then \((a,A,\mu) \forces \theta \IFF (a,A,\mu) \Sforces \theta.\)
\end{theorem}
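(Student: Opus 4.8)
The plan is to prove the nontrivial implication, namely that $(a,A,\mu) \forces \theta$ implies $(a,A,\mu) \Sforces \theta$; the other direction is already recorded in the preceding proposition. Write $\theta$ in the form $\forall w\,\exists y\,\phi_0(w,y)$ where $\phi_0$ is (essentially) a bounded formula, so that after Skolemization $\theta_S(W) \equiv \forall w\,\phi_{0,S}(\lambda t\,W(\langle w,t\rangle);w)$, which by Proposition~\ref{P:Pi1Skolem} has a $\Pi^0_1$ shadow. What we must produce is a single $(a,A,\mu)$-local unary name $W$ such that $(a,A,\mu) \forces \phi_{0,S}(\lambda t\,W(\langle w,t\rangle);w)$ for every $w$. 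The natural idea is to build $W$ so that, on the generic, $W$ records for each $w$ a witness $y$ together with whatever finite auxiliary data the inner Skolemization of $\phi_0$ needs; since $\phi_0$ is bounded (or $\Pi^0_1$), its Skolemization introduces no existential content beyond what the atomic-name evaluations already supply, so the real work is finding the witness $y$ for each $w$.

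First I would unwind what $(a,A,\mu) \forces \forall w\,\exists y\,\phi_0(w,y)$ means: for every $w$ and every extension $(b,B,\nu) \leq (a,A,\mu)$ there is a further extension $(c,C,\kappa) \leq (b,B,\nu)$ and a $y$ with $(c,C,\kappa) \forces \phi_0(w,y)$ (combining Definition~\ref{D:ForcingRelation} with Proposition~\ref{P:ForcingRelation}). The key move is to apply Proposition~\ref{P:Pi2Decision} to the $\Pi^0_1$ formula $\phi_0(w,\cdot)$ — really to the formula asserting a witness exists — to obtain a submeasure $\varrho_w$. The second alternative of that proposition would give $(a,A,\mu\wedge\varrho_w) \forces \forall y\,\lnot\phi_0(w,y)$, which contradicts the assumption that $(a,A,\mu) \forces \exists y\,\phi_0(w,y)$ (a forced sentence cannot have its negation forced by an extension). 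Hence the first alternative holds: for each $\tau \in \tree{a,A}$ there is a $y$ such that a suitable sub-condition decides $\phi_0(w,y)$. The crucial point, and the reason this works in plain \RCA, is that once we have the $\Pi^0_1$ shadow $\widehat{\phi_0}(b,B;w,y)$ from Proposition~\ref{P:Pi1Forcing} and the observation from Proposition~\ref{P:Pi1Forcing}/\ref{P:BoundedForcing} that forcing a $\Pi^0_1$ atomic-ish statement at a condition reduces to a $\Pi^0_1$ condition on $\tree{a,A}$, the search for $y$ given $w$ and a finite stem $\tau$ is a bounded search whose graph is $\Delta^0_1$ — so uniformization (which holds in \RCA) produces the name.

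Concretely, I would define $W$ by: $(\tau,\langle w,0\rangle,y) \in W$ iff $\tau \in \tree{a,A}$, $y$ is least such that the relevant $\Pi^0_1$ condition for forcing $\phi_0(w,y)$ holds "above $\tau$" in the precise sense extracted from Propositions~\ref{P:Pi1Forcing} and~\ref{P:Pi1Skolem}, and stipulate the higher slots $(\tau,\langle w,t+1\rangle,\cdot)$ to carry the further Skolem data dictated by the structure of $\phi_{0,S}$ (these are determined outright by the atomic evaluations, so they add nothing to the complexity). One checks $W$ is a legitimate partial name (single-valuedness from "least", monotonicity in $\tau$ from the $\Pi^0_1$ shape) and that it is $(a,A,\mu)$-local precisely because the first alternative of Proposition~\ref{P:Pi2Decision} was forced to hold at every stem — so $\dom(W)$ meets every $\nbhd{b,B,\nu} \leq \nbhd{a,A,\mu}$. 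Finally, $(a,A,\mu) \forces \theta_S(W)$ is verified by checking the $\Pi^0_1$ shadow $\widetilde{\theta}(a,A,W)$ of Proposition~\ref{P:Pi1Skolem} directly against the definition of $W$. The main obstacle I anticipate is bookkeeping: making the reduction from "$(a,A,\mu)$ forces a $\Pi^0_2$ sentence" down to a genuinely $\Delta^0_1$ search for witnesses clean enough that uniformization applies without smuggling in $\Sigma^0_2$-induction — in particular, ensuring that the passage through the $\exists y$ does not require knowing a bound on $y$ uniformly in $w$ before the name is built, which is exactly the kind of thing that forces one into stronger systems and which the per-stem formulation above is designed to avoid.
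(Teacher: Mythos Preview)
There is a genuine gap. Your argument leans on Proposition~\ref{P:Pi2Decision}, but that proposition is stated under $\WKL + \Bnd{\Sigma^0_2}$, not \RCA, so invoking it here would at best establish the theorem under those stronger hypotheses. Even granting the dichotomy, the inference you draw from it --- that ``for each $\tau \in \tree{a,A}$ there is a $y$ such that a suitable sub-condition decides $\phi_0(w,y)$'' --- does not follow: Proposition~\ref{P:Pi2Decision} produces a single extension, not a family indexed by stems.

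The deeper problem is your construction of $W$. You propose $(\tau,\pair{w}{0},y) \in W$ iff $y$ is least such that the condition determined by $\tau$ forces $\phi_0(w,y)$. By Proposition~\ref{P:Pi1Forcing} the forcing of a $\Pi^0_1$ (or bounded) sentence is itself a $\Pi^0_1$ predicate, so ``least $y$ satisfying a $\Pi^0_1$ condition'' is $\Pi^0_1 \wedge \Sigma^0_1$ and not $\Sigma^0_1$ in general --- yet names are required to have $\Sigma^0_1$ graphs (Definition~\ref{D:Names}). Your assertion that this is a ``bounded search whose graph is $\Delta^0_1$'' is precisely the point that needs an argument, and it does not hold as stated. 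The paper sidesteps this entirely by working not with the forcing relation but with the \emph{evaluation} of the truth-value name $T_{\lnot\theta}$ from Proposition~\ref{P:BoundedForcing}: it defines $W(\bar{v},0)$ to be the first $w$ at which $T_{\lnot\theta}(\bar{v},w)$ takes a nonzero value, packaged as a $\Sigma^0_1$ condition via a partial-sum trick (the running sum $\sum_{u \leq s} U(\bar{v},u)$ can only switch once, so its limit is detected by a single existential quantifier over $s$). The remaining slots $W(\bar{v},t+1)$ are then filled by the bounded-formula witnesses of Proposition~\ref{P:BoundedWitnessing}. This is the content of Proposition~\ref{P:Pi2Witnessing}, from which Theorem~\ref{T:Pi2Witnessing} follows immediately; note that the argument never touches submeasures or condition-splitting, which is exactly why it goes through in plain \RCA.
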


\noindent
As an immediate consequence, we have a slightly weaker form of witnessing for \(\Sigma^0_3\) sentences which is obtained by first extending the condition to witness the outer existential quantifier.

\begin{corollary}[\RCA]\label{C:Sigma3Witnessing}
  If \(\theta\) is an \((a,A,\mu)\)-local \(\Sigma^0_3\) sentence of the forcing language such that \((a,A,\mu) \forces \theta,\) then there is an extension \((b,B,\nu) \leq (a,A,\mu)\) such that \((b,B,\mu) \Sforces \theta.\) 
\end{corollary}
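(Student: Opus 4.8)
The plan is to bootstrap from the $\Pi^0_2$ case already settled in Theorem~\ref{T:Pi2Witnessing}. First I would put the $\Sigma^0_3$ sentence $\theta$ in the form $\exists w\,\psi(w)$, where $\psi(w)$ is a $\Pi^0_2$ formula of the forcing language; contracting the leading block of existential quantifiers with the pairing function is a purely syntactic move that introduces no names beyond canonical ones, so it preserves $(a,A,\mu)$-locality. Unwinding the abbreviation $\exists w\,\psi(w)\equiv\lnot\forall w\,\lnot\psi(w)$ through Definition~\ref{D:Skolem} gives
\[
  \theta_S(W)\ \equiv\ \lnot\lnot\,\psi_S\bigl(\lambda t\,W(t+1);\,W(0)\bigr),
\]
so by the double-negation clause of Proposition~\ref{P:ForcingRelation} a condition forces $\theta_S(W)$ exactly when it forces $\psi_S(\lambda t\,W(t+1);W(0))$.

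Next I would apply the $\exists$-clause of Proposition~\ref{P:ForcingRelation} to the trivial refinement $(a,A,\mu)\le(a,A,\mu)$: from $(a,A,\mu)\forces\exists w\,\psi(w)$ we obtain an extension $(b,B,\nu)\le(a,A,\mu)$ and an $x\in\N$ with $(b,B,\nu)\forces\psi(x)$. Since every extension of $(b,B,\nu)$ is also an extension of $(a,A,\mu)$, Definition~\ref{D:Locality} shows that each name occurring in $\theta$, and hence the sentence $\psi(x)$, is $(b,B,\nu)$-local. Now $\psi(x)$ is a $(b,B,\nu)$-local $\Pi^0_2$ sentence forced by $(b,B,\nu)$, so Theorem~\ref{T:Pi2Witnessing} furnishes a $(b,B,\nu)$-local unary name $W_0$ with $(b,B,\nu)\forces\psi_S(W_0;x)$ (Skolemization commutes with substitution of the numeral $x$, since $x$ carries no quantifiers).

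It remains to fold the witness $x$ and the Skolem name $W_0$ into a single unary name for $\theta$. Define $W$ by $(\tau,0,y)\in W$ iff $y=x$, and $(\tau,n+1,y)\in W$ iff $(\tau,n,y)\in W_0$. This is a legitimate partial $1$-ary name in the sense of Definition~\ref{D:Names}; its domain equals $\dom(W_0)$, so $W$ is $(b,B,\nu)$-local; and by construction the nullary name $W(0)$ is literally the canonical name $\check{x}$ while $\lambda t\,W(t+1)$ is literally $W_0$. Hence $\theta_S(W)\equiv\lnot\lnot\psi_S(W_0;x)$, which $(b,B,\nu)$ forces by the previous paragraph together with the double-negation clause. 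That is, $(b,B,\nu)\Sforces\theta$ with $(b,B,\nu)\le(a,A,\mu)$, as required.

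The mathematical content sits entirely in the two citations above; what needs care is the bookkeeping: placing $\theta$ in the shape $\exists w\,\psi(w)$, noting that locality persists downward along $\le$, and building $W$ so that it genuinely carries $x$ in its $0$th coordinate and $W_0$ in the rest. I expect that last point, rather than any real difficulty, to be the crux, and choosing $W$ this way has the pleasant effect that no substitution lemma about replacing forced-equal terms is ever invoked.
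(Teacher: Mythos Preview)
Your proposal is correct and follows precisely the approach the paper sketches in one sentence: extend the condition to witness the outer existential quantifier, then invoke Theorem~\ref{T:Pi2Witnessing} on the resulting \(\Pi^0_2\) sentence. The careful unwinding of \(\theta_S(W)\equiv\lnot\lnot\psi_S(\lambda t\,W(t+1);W(0))\) and the explicit packaging of \(x\) and \(W_0\) into a single name \(W\) are exactly the bookkeeping details the paper suppresses.
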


\noindent
The main use of Theorem~\ref{T:Pi2Witnessing} is that \(\Sigma^0_1\) formulas of the forcing language can be uniformized by names.

\begin{corollary}[\RCA]\label{C:Sigma1Uniformization}
  If \(\phi(\bar{v},w)\) is an \((a,A,\mu)\)-local \(\Sigma^0_1\) formula of the forcing language such that \((a,A,\mu) \forces \forall\bar{v}\,\exists w\,\phi(\bar{v},w),\) then there is an \((a,A,\mu)\)-local name \(F(\bar{v})\) such that \((a,A,\mu) \forces \forall\bar{v}\,\phi(\bar{v},F(\bar{v})).\)
\end{corollary}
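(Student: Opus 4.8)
The plan is to derive Corollary~\ref{C:Sigma1Uniformization} from Theorem~\ref{T:Pi2Witnessing} by recognizing $\forall\bar v\,\exists w\,\phi(\bar v,w)$ as a $\Pi^0_2$ sentence and then reading off the Skolem name. First I would observe that $\forall\bar v\,\exists w\,\phi(\bar v,w)$, with $\phi$ a $\Sigma^0_1$ formula of the forcing language, is (after the usual quantifier manipulations) an $(a,A,\mu)$-local $\Pi^0_2$ sentence: locality of the name $F(\bar v)$ produced at the end will be inherited from locality of the component of the Skolem witness, so there is no locality obstruction. Applying Theorem~\ref{T:Pi2Witnessing} to $\theta \equiv \forall\bar v\,\exists w\,\phi(\bar v,w)$ gives that $(a,A,\mu)\forces\theta$ implies $(a,A,\mu)\Sforces\theta$, i.e.\ there is an $(a,A,\mu)$-local unary name $W$ such that $(a,A,\mu)\forces\theta_S(W)$.

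Next I would unwind the definition of $\theta_S(W)$ from Definition~\ref{D:Skolem}. Writing $\exists w\,\phi$ as $\lnot\forall w\,\lnot\phi$, the Skolemization of $\forall\bar v\,\exists w\,\phi(\bar v,w)$ introduces, through the $\forall\bar v$ clause and the inner $\lnot\forall w\,\lnot$ pattern, a component of $W$ that plays the role of the Skolem function choosing $w$ from $\bar v$. Concretely I expect $\theta_S(W;\bar v)$ to have the shape $\forall\bar v\,\phi_H(\ldots;\bar v, W_0(\bar v))$ for an appropriate coordinate projection $W_0$ of $W$ (dualizing the $\exists$ through $\phi_H$ as in the $\lnot\forall$ case). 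Setting $F(\bar v) := W_0(\bar v)$ — a partial name obtained from $W$ by composing with the primitive recursive pairing/unpairing operations, hence again $(a,A,\mu)$-local — the forced statement $\theta_S(W)$ literally says $(a,A,\mu)\forces\forall\bar v\,\phi(\bar v,F(\bar v))$ once the Herbrandized atomic core is recognized as $\phi$ itself (atomic Skolemization and Herbrandization are the identity, and the only quantifier stripped is the existential one that $F$ now witnesses). I would spell out this identification case by case for the $\Sigma^0_1$ formula $\phi$, which is routine.

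The one point requiring a little care — and the step I expect to be the main obstacle — is matching the \emph{syntactic form} of $\theta_S(W)$ with the literal assertion ``$\forall\bar v\,\phi(\bar v,F(\bar v))$ is forced'': the Skolemization mechanically distributes $W$ through the connectives of $\phi$ (e.g.\ $\lambda t\,W(2t)$ versus $\lambda t\,W(2t+1)$ on conjunctions, $\lambda t\,W(\pair{w}{t})$ on inner universals), so $\theta_S(W;\bar v)$ is not syntactically $\phi(\bar v,F(\bar v))$ but is provably equivalent to it in the forcing relation. I would therefore invoke Proposition~\ref{P:Pi1Skolem} to pass between $(a,A,\mu)\forces\phi_S(W';\bar x)$ for the subformula $\phi$ and the $\Pi^0_1$ forcing of $\phi$ itself, noting that for the $\Sigma^0_1$ formula $\phi$ the Herbrandization collapses correctly; alternatively, one checks directly that for $\Sigma^0_1$ $\phi$ the relation $(a,A,\mu)\forces\phi_S(\lambda t\,W(\bar x,t);\bar x)$ with $W$ chosen to encode a witness is equivalent to $(a,A,\mu)\forces\phi(\bar x,F(\bar x))$. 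Everything else is bookkeeping with the pairing functions and the closure of partial names under superposition established in Section~\ref{S:Forcing}, all available in \RCA.
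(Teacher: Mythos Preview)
Your approach is correct and is exactly the derivation the paper intends: the corollary is stated without proof as the ``main use'' of Theorem~\ref{T:Pi2Witnessing}, and unwinding $\theta_S(W)$ for $\theta\equiv\forall\bar v\,\exists w\,\phi(\bar v,w)$ to extract $F(\bar v)=W(\pair{\bar v}{0})$ is precisely what is meant. One small slip: in the unwinding you write $\phi_H$ where it should be $\phi_S$ --- chasing Definition~\ref{D:Skolem} through $\lnot\forall w\,\lnot\phi$ gives $\lnot\lnot\phi_S(\lambda t\,W'(t+1);\bar v,W'(0))$, and then the implication $\Sforces\Rightarrow\forces$ (together with $\forces\lnot\lnot\psi\iff\forces\psi$) yields $(a,A,\mu)\forces\phi(\bar v,F(\bar v))$ as desired.
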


\noindent
These results are optimal since \RCA\ cannot prove the existence of Skolem functions for \(\Pi^0_3\) sentences (of the standard language).

In what follows, we will prove Theorem~\ref{T:Pi2Witnessing} incrementally, starting with bounded sentences. The statements we will need are also more complex since we additionally need some uniformity in the witnessing names to bootstrap our way up in the arithmetical hierarchy.

\begin{proposition}[\RCA]\label{P:BoundedWitnessing}
  If \(\theta(\bar{v})\) is a bounded formula of the forcing language  then there are partial names \(W^\theta_S(\bar{v},t)\) and \(W^\theta_H(\bar{v},t)\) such that, for all \(\bar{x} \in \N\), if \(\theta(\bar{x})\) is \((a,A,\mu)\)-local then so are \(\lambda t\,W^\theta_S(\bar{x},t)\) and \(\lambda t\,W^\theta_H(\bar{x},t),\) and \[\begin{aligned}
  (a,A,\mu) \forces \theta(\bar{x})
  &\IFF (a,A,\mu) \forces \theta_S(\lambda t\,W^\theta_S(\bar{x},t);\bar{x}), \\
  &\IFF (a,A,\mu) \forces \theta_H(\lambda t\,W^\theta_H(\bar{x},t);\bar{x}).
  \end{aligned}\]
\end{proposition}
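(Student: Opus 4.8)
The plan is to prove the proposition by induction on the complexity of the bounded formula \(\theta\), following the clauses of Definition~\ref{D:Skolem} after first rewriting every bounded quantifier \(\forall w\le F(\bar v)\,\psi\) in the primitive form \(\forall w\,\lnot\big((w\sub F(\bar v)=0)\land\lnot\psi\big)\) (which is bounded). The key structural fact I would isolate at the outset is that, for a bounded \(\theta\) and any name substituted for the parameter \(W\), the formulas \(\theta_S(\lambda t\,W(\bar v,t);\bar v)\) and \(\theta_H(\lambda t\,W(\bar v,t);\bar v)\) are again \emph{bounded} formulas of the forcing language: the Skolemization of a bounded universal quantifier is a bounded universal quantifier, a negation merely swaps \(S\) and \(H\), and conjunctions stay conjunctions. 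Consequently Propositions~\ref{P:BoundedForcing} and~\ref{P:Pi1Forcing} apply to both sides, so ``\((a,A,\mu)\) forces them'' reduces to the corresponding truth-name evaluating to \(0\) along every \(\tau\in\tree{a,A}\); the whole argument then becomes a matter of building, primitive-recursively and uniformly in \(\theta\), witness names whose induced truth-names decide \(0\) at exactly the same \(\tau\)'s as \(T_\theta\).

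The easy cases go as follows. If \(\theta\) is atomic then \(\theta_S(W;\bar v)\equiv\theta(\bar v)\equiv\theta_H(W;\bar v)\) with \(W\) not occurring, so I take \(W^\theta_S\) and \(W^\theta_H\) to be the canonical name of a constant function, which is local at every condition. If \(\theta\equiv\lnot\phi\), I set \(W^\theta_S:=W^\phi_H\) and \(W^\theta_H:=W^\phi_S\); since locality is inherited by extensions and \(\lnot\phi\) is local exactly when \(\phi\) is, the induction hypothesis is available at every \((b,B,\nu)\le(a,A,\mu)\), and the equivalence follows because \((a,A,\mu)\forces\lnot\chi\) holds iff no extension forces \(\chi\). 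If \(\theta\equiv\phi\land\psi\), the Skolemization feeds the conjuncts the disjoint halves \(\lambda t\,W(2t)\) and \(\lambda t\,W(2t+1)\), so I amalgamate \(W^\phi_S\) and \(W^\psi_S\) along even and odd arguments; \(\theta_S(\lambda t\,W^\theta_S(\bar x,t);\bar x)\) is then literally \(\phi_S(\lambda t\,W^\phi_S(\bar x,t);\bar x)\land\psi_S(\lambda t\,W^\psi_S(\bar x,t);\bar x)\), and since forcing a conjunction is the conjunction of the forcings, two uses of the induction hypothesis finish it. The \(H\)-side of a disjunction, which likewise splits \(W\), and the \(S\)-side of a bounded universal quantifier, which merely reindexes \(W\) through the pairing \(\pair{w}{t}\) and needs only the recursive \(W^\psi_S\), are handled the same way.

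The crux is the \(H\)-side of a conjunction (and, dually, the \(S\)-side of a disjunction and the \(H\)-side of a bounded universal quantifier). Here \(\theta_H(W;\bar v)\equiv\phi_H(W;\bar v)\land\psi_H(W;\bar v)\) feeds the \emph{same} \(W\) to both conjuncts, so a coordinatewise amalgam of \(W^\phi_H\) and \(W^\psi_H\) is unavailable: the two Herbrandizations read overlapping coordinates of \(W\). The way around it exploits a one-sided monotonicity of Herbrandization for bounded formulas: \(\phi_H(\lambda t\,W(\bar v,t);\bar v)\) is satisfied along any \(\tau\) along which \(\phi(\bar v)\) is, \emph{whatever} \(W\) is (this is the \(\tau\)-level content of \(\forces\phi\THEN\Hforces\phi\)), and dually \(\phi_S(\lambda t\,W(\bar v,t);\bar v)\) fails along any \(\tau\) along which \(\phi(\bar v)\) does; so the witness name only has to help in the opposite direction. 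I would therefore abandon the recursive construction here and take \(W^\theta_H\) to be a ``Herbrand searcher'': on each finite piece \(\tau\) it returns, at every coordinate that \(\theta_H\) reads as a candidate for one of \(\theta\)'s (bounded) quantifiers, the least value falsifying the corresponding matrix on \(\tau\) — computed by nested bounded minimization over the truth-names of the subformulas and the bound-names \(F\), prioritizing the leftmost subformula — and a value above the bound when there is no counterexample. Each search is finite because the quantifiers are bounded, so \(W^\theta_H\) is a genuine partial name, primitive-recursive in \(\theta\). One then checks \(\tau\) by \(\tau\): if \(\theta\) holds on \(\tau\) then so does \(\theta_H(W^\theta_H)\) by the monotonicity; if \(\theta\) fails on \(\tau\) then some subformula's matrix fails, the searcher returns a genuine counterexample to it, and the corresponding conjunct of \(\theta_H(W^\theta_H)\) fails on \(\tau\) — a priority conflict between conjuncts being harmless, since one false conjunct already falsifies the conjunction. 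This matches the truth-name of \(\theta_H(\lambda t\,W^\theta_H(\bar x,t);\bar x)\) to \(T_\theta\) along \(\tree{a,A}\), which is the required equivalence; the bounded-quantifier \(H\)-case is the special case where there is a single leftmost quantifier to search, with \(W^\psi_H\) supplied recursively on the shifted coordinates.

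Finally, for locality and uniformity: in every case the domain of \(\lambda t\,W^\theta_S(\bar x,t)\) (respectively \(\lambda t\,W^\theta_H(\bar x,t)\)) contains a finite intersection of domains of names that are \((a,A,\mu)\)-local — the witness names of the immediate subformulas by the induction hypothesis, canonical names, and the truth-names of Proposition~\ref{P:BoundedForcing}, which are \((a,A,\mu)\)-local whenever the corresponding formula is — and a finite intersection of local names is again local, which one sees by running the recursion in the proof of Proposition~\ref{P:FunTotal} for the finitely many name-requirements simultaneously; uniformity is automatic since the witness name at each stage is obtained from those of the immediate subformulas by a fixed primitive-recursive operation. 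I expect the main difficulty to be exactly the conjunction (and dual disjunction) case of the third paragraph: routing all of a conjunction's Herbrand choices through one shared name breaks the disjoint-amalgam trick, and the verification that the truth-name-driven searcher nonetheless reproduces the truth value of \(\theta\) — in particular that conflicts between the candidates demanded by different conjuncts cannot spoil the equivalence — is the delicate point, resting on the one-sided monotonicity noted above.
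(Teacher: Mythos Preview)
Your overall strategy is sound and you correctly identify the crux: the $H$-side of a conjunction (and its duals) is the only case where the shared parameter $W$ causes trouble, and the one-sided monotonicity you isolate is precisely the lever that makes it go through. There are two points worth flagging, one minor and one substantive.

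First, the opening move of rewriting $\forall w\le F(\bar v)\,\psi$ as $\forall w\,\lnot\big((w\sub F(\bar v)=0)\land\lnot\psi\big)$ and then calling the result ``bounded'' is not right: the rewritten formula carries an unbounded $\forall w$, so it is $\Pi^0_1$, not bounded, and your subsequent claim that the Skolemization of a bounded quantifier is a bounded quantifier no longer follows on the nose. In practice your argument treats the bounded quantifier as a primitive anyway (you speak of ``the $S$-side of a bounded universal quantifier'' and ``the $H$-side of a bounded universal quantifier''), so the rewriting is a red herring; it is cleaner to drop it and handle $\forall w\le F(\bar v)\,\psi$ as its own inductive case, which is what the paper does.

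Second, and more to the point, you do not need to abandon the recursion in the conjunction $H$-case. The paper keeps the recursive structure by exploiting the truth-name $T_\phi$ of Proposition~\ref{P:BoundedForcing} as a switch:
\[
W^\theta_H(\bar v,t)=
\begin{cases}
W^\phi_H(\bar v,t) & \text{if } T_\phi(\bar v)\neq 0,\\
W^\psi_H(\bar v,t) & \text{if } T_\phi(\bar v)=0.
\end{cases}
\]
The one-sided monotonicity you state does the rest: if $\phi$ holds then $\phi_H$ holds regardless of $W$, so feeding it $W^\psi_H$ is harmless and the inductive hypothesis on $\psi$ finishes; if $\phi$ fails then $W^\phi_H$ makes $\phi_H$ fail and the conjunction drops. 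The bounded-quantifier $H$-case is handled by the same trick: $W^\theta_H(\bar v,0)$ is defined to be the first $w\le F(\bar v)$ with $T_\phi(\bar v,w)\neq 0$ (or $F(\bar v)+1$ if none exists), and the remaining coordinates are fed recursively to $W^\phi_H$ at that $w$. Your ``Herbrand searcher'' is a correct but monolithic unfolding of exactly this recursion; what you describe as ``prioritizing the leftmost subformula'' is precisely the $T_\phi$-test. So nothing is wrong with your plan, but the paper's construction is shorter, stays recursive throughout, and makes the locality verification immediate since at each step $W^\theta_H$ is a primitive-recursive combination of $T_\phi$ with the inductively given $W^\phi_H,W^\psi_H$.
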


\begin{proof}
  We define \(W^\theta_S(\bar{v},t)\) and \(W^\theta_H(\bar{v},t)\) by induction on the complexity of~\(\theta(\bar{v}).\)
  \begin{itemize}
  \item If \(\theta(\bar{v})\) is atomic, define \(W^\theta_S(\bar{v},t) = 0\) and \(W^\theta_H(\bar{v},t) = 0,\) identically.
  \item If \(\theta(\bar{v}) \equiv \lnot\phi(\bar{v}),\) define \(W^\theta_S(\bar{v},t) = W^\phi_H(\bar{v},t)\) and \(W^\theta_H(\bar{v},t) = W^\phi_S(\bar{v},t).\)
  \item If \(\theta(\bar{v}) \equiv \phi(\bar{v})\land\psi(\bar{v}),\) define
    \[W^\theta_S(\bar{v},2t+i) = \begin{cases} 
      W^\phi_S(\bar{v},t) & \text{when $i = 0$,} \\
      W^\psi_S(\bar{v},t) & \text{when $i = 1$;}
    \end{cases}\]
    and
    \[W^\theta_H(\bar{v},t) = \begin{cases} 
      W^\phi_H(\bar{v},t) & \text{when $T_\phi(\bar{v}) \neq 0$,} \\ 
      W^\psi_H(\bar{v},t) & \text{when $T_\phi(\bar{v}) = 0$,} 
    \end{cases}\] 
    where \(T_\phi(\bar{v})\) is as in Proposition~\ref{P:BoundedForcing}.
  \item If \(\theta(\bar{v}) \equiv \forall w \leq F(\bar{v})\,\phi(\bar{v},w),\) define
    \[W^\theta_S(\bar{v},t) = \begin{cases}
      W^{\phi}_S(\bar{v},\fst{t},\lfloor\snd{t}/2\rfloor) & \text{when $\fst{t} \leq F(\bar{v})$,} \\
      0 & \text{otherwise.}
    \end{cases}\]
    To define \(W^\theta_H,\) first define \[W^\theta_H(\bar{v},0) = \sum_{w \leq F(\bar{v})} U(\bar{v},w),\] where \[U(\bar{v},w) = 1\sub\sum_{u \leq w} T_\phi(\bar{v},u)\] and \(T_\phi(\bar{v},w)\) is as in Proposition~\ref{P:BoundedForcing}. Thus, in plain language, \(W^\theta_H(\bar{v},0)\) is either the first \(w \leq F(\bar{v})\) such that \(\phi(\bar{v},w)\) fails, or \(W^\theta_H(\bar{v},0) = F(\bar{v})+1\) if there is no such \(w.\) Then, define \[W^\theta_H(\bar{v},t+1) = 
    \begin{cases}
      W^\phi_H(\bar{v},W^\theta_H(\bar{v},0),t) & \text{when \(W^\theta_H(\bar{v},0) \leq F(\bar{v})\),} \\
      0 & \text{otherwise.}
    \end{cases}\]
  \end{itemize}
  Verifications are straightforward.
\end{proof}

\noindent
Note that the conclusion of Proposition~\ref{P:BoundedWitnessing} is considerably stronger than that of Theorem~\ref{T:Pi2Witnessing}. This strong form of witnessing extends to \(\Pi^0_1\) sentences, but only for Skolemized forcing and not for Herbrandized forcing.

\begin{proposition}[\RCA]\label{P:Pi1Witnessing}
 If \(\phi(\bar{v})\) is a \(\Pi^0_1\) formula of the forcing language then there is a partial name \(W^\phi_S(\bar{v},t)\) such that, for all \(\bar{x} \in \N,\) if \(\phi(\bar{x})\) is \((a,A,\mu)\)-local then so is \(\lambda t\,W^\phi_S(\bar{x},t),\) and \[(a,A,\mu) \forces \phi(\bar{x}) \IFF (a,A,\mu) \forces \phi_S(\lambda t\,W^\phi_S(\bar{x},t);\bar{x}).\] \end{proposition}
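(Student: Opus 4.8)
The plan is the following. Write $\phi(\bar v)$ as $\forall w\,\theta(\bar v,w)$ with $\theta$ bounded, apply Proposition~\ref{P:BoundedWitnessing} to $\theta$ (whose free variables are $\bar v$ and $w$) to obtain the partial name $W^\theta_S(\bar v,w,t)$, and define
\[
  W^\phi_S(\bar v,\pair wt) := W^\theta_S(\bar v,w,t);
\]
in other words, put $(\sigma,\bar v,s,z)\in W^\phi_S$ iff $(\sigma,\bar v,\fst s,\snd s,z)\in W^\theta_S$. Since $\pair{\cdot}{\cdot}$ is a bijection and $W^\theta_S$ is monotone in $\sigma$, functional, and $\Sigma^0_1$, the same is true of $W^\phi_S$, so it is a partial name; and by construction $\lambda t\,W^\phi_S(\bar x,\pair yt)$ is \emph{literally} the unary name $\lambda t\,W^\theta_S(\bar x,y,t)$ for every $y$.

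For the forcing equivalence I would just unwind the definitions. Recall from Definition~\ref{D:Skolem} that $\phi_S(W;\bar v)\equiv\forall w\,\theta_S(\lambda t\,W(\pair wt);\bar v,w)$, and note that $\theta(\bar x,y)$ contains exactly the names that $\phi(\bar x)$ does, so whenever $\phi(\bar x)$ is $(a,A,\mu)$-local so is $\theta(\bar x,y)$ for every $y$. Then for such $\bar x$,
\begin{align*}
  (a,A,\mu)\forces\phi(\bar x)
  &\IFF \text{$(a,A,\mu)\forces\theta(\bar x,y)$ for all $y$}\\
  &\IFF \text{$(a,A,\mu)\forces\theta_S(\lambda t\,W^\theta_S(\bar x,y,t);\bar x,y)$ for all $y$}\\
  &\IFF \text{$(a,A,\mu)\forces\theta_S(\lambda t\,W^\phi_S(\bar x,\pair yt);\bar x,y)$ for all $y$}\\
  &\IFF (a,A,\mu)\forces\phi_S(\lambda t\,W^\phi_S(\bar x,t);\bar x),
\end{align*}
where the four equivalences use, respectively, the universal clause of Definition~\ref{D:ForcingRelation}, Proposition~\ref{P:BoundedWitnessing}, the identification of names above, and the universal clause of Definition~\ref{D:ForcingRelation} read against the displayed form of $\phi_S$.

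The step I expect to be the real work is the locality claim: if $\phi(\bar x)$ is $(a,A,\mu)$-local then so is $\lambda t\,W^\phi_S(\bar x,t)$. Its domain is $\bigcap_y\dom(\lambda t\,W^\theta_S(\bar x,y,t))$, and by Proposition~\ref{P:BoundedWitnessing} each $\lambda t\,W^\theta_S(\bar x,y,t)$ is $(a,A,\mu)$-local, i.e.\ has dense domain in $\nbhd{a,A,\mu}$. Given $(b,B,\nu)\leq(a,A,\mu)$, I would build $X\in\nbhd{b,B,\nu}$ lying in $\dom(\lambda t\,W^\theta_S(\bar x,y,t))$ for \emph{every} $y$ by a fusion. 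Enumerate the requirements $R_{\pair yt}$, "$\sigma\subseteq\chi_X$ with $(\sigma,\bar x,y,t,z)\in W^\theta_S$ for some $z$", and interleave them with the requirements "$\nu(b)\geq n$"; build a descending sequence of finite parts $b_n$ and infinite parts $B_n$ with $(b_n,B_n,\nu)\leq(b,B,\nu)$, maintaining $\nu(B_n)=\infty$. At an $R_{\pair yt}$-stage, locality of the \emph{single} name $\lambda t'\,W^\theta_S(\bar x,y,t')$ supplies an $X'\in\nbhd{b_n,B_n,\nu}$ in its domain, hence a $\sigma\subseteq\chi_{X'}$, which may be taken of length $>n$, with $(\sigma,\bar x,y,t,z)\in W^\theta_S$; absorb it by setting $b_{n+1}=b_n\cup\sigma^{-1}(1)$ and $B_{n+1}=B_n\setminus\sigma^{-1}(0)$, a finite shrinking of $B_n$ that preserves $\nu(B_{n+1})=\infty$ since $\nu$ is finite on finite sets. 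At a measure-stage, use $\nu(B_n)=\infty$ to enlarge $b_n$ inside $B_n$ until its $\nu$-value exceeds the current target, with $B_{n+1}=B_n$. In either case $b_{n+1}$ decides membership of $\set{0,\dots,n}$ (automatically, as the relevant lengths exceed $n$), so $X=\bigcup_n b_n$ has characteristic function in the model; then $X\in\nbhd{b,B,\nu}$ and $X$ meets every $R_{\pair yt}$. The reason this stays inside \RCA\ is that at each stage only one name needs to be local, so no appeal to Baire category or simultaneous genericity is made, and the searches at each stage provably succeed, so the recursion defining $\seq{b_n,B_n}$ uses only primitive recursion and $\Sigma^0_1$-uniformization. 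Everything else---the definition of $W^\phi_S$ and the chain of equivalences---is bookkeeping with Definitions~\ref{D:Skolem} and~\ref{D:ForcingRelation} and Proposition~\ref{P:BoundedWitnessing}.
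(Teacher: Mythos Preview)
Your proof is correct and follows exactly the paper's approach: write $\phi(\bar v)\equiv\forall w\,\theta(\bar v,w)$ with $\theta$ bounded, and set $W^\phi_S(\bar v,t)=W^\theta_S(\bar v,\fst t,\snd t)$ using Proposition~\ref{P:BoundedWitnessing}; the paper's proof is the one-line version of your argument, leaving the verifications (including locality) as straightforward.

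One remark on the locality step, where you do more work than necessary. You treat $\dom\bigl(\lambda t\,W^\phi_S(\bar x,t)\bigr)=\bigcap_y\dom\bigl(\lambda t\,W^\theta_S(\bar x,y,t)\bigr)$ as a genuinely countable intersection and run a fusion to meet it. But inspection of the inductive construction in Proposition~\ref{P:BoundedWitnessing} shows that $W^\theta_S$ is obtained from the finitely many names occurring in $\theta$ (equivalently, in $\phi$) by superposition and primitive recursion, so $\dom(W^\theta_S)\supseteq\bigcap_i\dom(F_i)$ where $F_1,\dots,F_m$ are those names; this containment is uniform in \emph{all} the first-order variables $\bar v,w,t$. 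Hence the countable intersection over $y$ collapses to the fixed finite intersection $\bigcap_i\dom(F_i)$, whose density in $\nbhd{a,A,\mu}$ is exactly the fact (already used implicitly in Propositions~\ref{P:BoundedForcing} and~\ref{P:BoundedWitnessing}, and stated later as ``composition and primitive recursion preserve locality'') that your fusion argument proves in greater generality.
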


\begin{proof}
  Suppose \(\phi(\bar{v}) \equiv \forall w\,\theta(\bar{v},w),\) where \(\theta(\bar{v},w)\) is bounded. Then define \(W^\phi_S(\bar{v},t) = W^{\theta}_S(\bar{v},\fst{t},\snd{t}),\) where \(W^{\theta}_S(\bar{v},w,t)\) is as in Proposition~\ref{P:BoundedWitnessing}. This \(W^\phi_S(\bar{v},t)\) is as required.
\end{proof}

Theorem~\ref{T:Pi2Witnessing} is an immediate consequence of the following result, which additionally shows that the witnessing names can be chosen uniformly.

\begin{proposition}[\RCA]\label{P:Pi2Witnessing}
  If \(\phi(\bar{v})\) is a \(\Pi^0_2\) formula of the forcing language then there is a partial name \(W^\phi_S(\bar{v},t)\) such that, for all \((a,A,\mu)\) and all \(\bar{x} \in \N,\) if \(\phi(\bar{x})\) is \((a,A,\mu)\)-local and \((a,A,\mu) \forces \phi(\bar{x}),\) then \(\lambda t\,W^\phi_S(\bar{x},t)\) is \((a,A,\mu)\)-local and \((a,A,\mu) \forces \phi_S(\lambda t\,W^\phi_S(\bar{x},t);\bar{x}).\)
\end{proposition}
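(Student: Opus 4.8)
The plan is to reduce the \(\Pi^0_2\) case to the bounded case of Proposition~\ref{P:BoundedWitnessing} by manufacturing an explicit Skolem name for the single existential quantifier and then unwinding Definition~\ref{D:Skolem}. Write \(\phi(\bar v)\equiv\forall u\,\exists w\,\theta(\bar v,u,w)\) with \(\theta\) bounded, and let \(T(\bar v,u,w)\) be the partial name of Proposition~\ref{P:BoundedForcing}, so that \((a,A,\mu)\forces\forall u\,\forall w\,(\theta(\bar v,u,w)\liff T(\bar v,u,w)=0)\) whenever the relevant names are \((a,A,\mu)\)-local. Define the \emph{search name} \(G\) by
\[(\tau,\bar v,u,y)\in G\ \IFF\ (\tau,\bar v,u,y,0)\in T\ \land\ \forall w<y\,\exists z>0\,(\tau,\bar v,u,w,z)\in T .\]
Then \(G\) is a partial name and, by construction, \((\tau,\bar v,u,y)\in G\) forces \(T(\bar v,u,y)=0\); consequently \((a,A,\mu)\forces\theta(\bar x,u,G(\bar x,u))\) holds for \emph{every} condition making this sentence local, with no hypotheses. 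Now set
\[W^\phi_S(\bar v,\pair{u}{0})=G(\bar v,u),\qquad W^\phi_S(\bar v,\pair{u}{t+1})=W^\theta_S(\bar v,u,G(\bar v,u),t),\]
where \(W^\theta_S(\bar v,u,w,t)\) is the bounded Skolem name produced for \(\theta(\bar v,u,w)\) by Proposition~\ref{P:BoundedWitnessing}.

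Unwinding Definition~\ref{D:Skolem} (using \(\exists w\,\theta\equiv\lnot\forall w\,\lnot\theta\), and that Skolemization commutes with substituting a name for a variable) one computes
\[\phi_S(W;\bar v)\equiv\forall u\,\lnot\lnot\,\theta_S\bigl(\lambda t\,W(\pair{u}{t+1});\bar v,u,W(\pair{u}{0})\bigr),\]
so that \((a,A,\mu)\forces\phi_S(\lambda t\,W^\phi_S(\bar x,t);\bar x)\) amounts, by the \(\lnot\lnot\)-clause of Proposition~\ref{P:ForcingRelation} and Proposition~\ref{P:BoundedWitnessing}, to ``\((a,A,\mu)\forces\theta(\bar x,u,G(\bar x,u))\) for every \(u\)'' --- which is automatic. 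Thus the entire substance of the proposition is the locality clause: if \(\phi(\bar x)\) is \((a,A,\mu)\)-local and \((a,A,\mu)\forces\phi(\bar x)\), then \(\lambda t\,W^\phi_S(\bar x,t)\) is \((a,A,\mu)\)-local. Since locality of \(\phi(\bar x)\) makes every name of \(\theta(\bar x,u,w)\), hence \(T(\bar x,u,w)\) and \(W^\theta_S(\bar x,u,w,t)\), \((a,A,\mu)\)-local, what remains is: below every \((b,B,\nu)\le(a,A,\mu)\) there is an \(X\in\nbhd{b,B,\nu}\) on which the search defining \(G^X(\bar x,u)\) halts for every \(u\) and the finitely many names feeding \(W^\theta_S(\bar x,u,G^X(\bar x,u),t)\) are all defined.

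I expect this density statement to be the main obstacle, and I would prove it by a fusion construction over \((b,B,\nu)\). At stage \(s\), given a condition \((c_s,C_s,\kappa_s)\le(a,A,\mu)\) produced so far, first use \((a,A,\mu)\forces\exists w\,\theta(\bar x,s,w)\) with Proposition~\ref{P:ForcingRelation} to pass to an extension that forces \(\theta(\bar x,s,y_s)\), hence forces \(T(\bar x,s,y_s)=0\), for some \(y_s\); then, exploiting locality of the relevant names below that extension, shrink finitely many times --- each shrink absorbing a finite initial segment of a suitable set into the condition, which is still a condition because only finitely many points are removed from its infinite part --- until \(T(\bar x,s,w)\) is decided for all \(w\le y_s\) (pinning \(G^X(\bar x,s)\) to the least such \(w\) with decided value \(0\), which exists since \(T(\bar x,s,y_s)=0\) is forced) and every name value reachable while evaluating the bounded formula \(\theta(\bar x,s,w)\) is decided too, for each \(w\le y_s\); finally take a \({\le_s}\)-step. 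The limit \(X=\bigcup_s c_s\) then lies in \(\nbhd{b,B,\nu}\) by the Fusion Lemma and, by construction, in \(\dom(\lambda t\,W^\phi_S(\bar x,t))\). The two delicate points are (i) \emph{deciding}, not merely \emph{defining}, the name values that bound the bounded quantifiers of \(\theta\), so that only finitely many further name values must be handled at each stage; and (ii) interleaving a \({\le_s}\)-fusion step with these finitely many shrinkings, which is exactly what the Axiom~A structure recorded in the Fusion Lemma allows.
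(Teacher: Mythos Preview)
Your search name $G$ is essentially the same object as the paper's $W^\phi_S(\bar v,0)$: both encode ``the least $w$ with $T(\bar v,u,w)=0$'' as a $\Sigma^0_1$ partial name (the paper writes it as a telescoping sum $\sum_w U(\bar v,w)$ that switches from $1$ to $0$ at most once, but the content is identical), and both then feed this value into the bounded Skolem name $W^\theta_S$. So the construction is the same; the substantive difference is how locality is argued.

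The paper's argument is much lighter than your fusion. It first absorbs the outer universal quantifier into the parameter tuple, reducing to the $\Sigma^0_1$ case $\exists w\,\theta(\bar v,w)$. There, given any extension of $(a,A,\mu)$, a \emph{single} further extension $(b,B,\nu)$ forcing $\theta(\bar x,w_0)$ suffices: on $\nbhd{b,B,\nu}$ the search for $W^\phi_S(\bar x,0)$ provably halts by $w_0$ wherever the component names are defined, so $\dom(\lambda t\,W^\phi_S(\bar x,t))\cap\nbhd{b,B,\nu}$ contains $\dom(\lambda w\,U(\bar x,w))\cap\dom(\lambda t\,W^\theta_S(\bar x,t))\cap\nbhd{b,B,\nu}$, which is nonempty by locality of $U$ and $W^\theta_S$. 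No iteration, no fusion.

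Your fusion route, by contrast, has a genuine gap as written. At stage $s$ you ``pass to an extension that forces $\theta(\bar x,s,y_s)$''; that extension will in general carry a strictly larger submeasure $\kappa'\geq\kappa_s$. The subsequent ``shrinks'' alter $c$ and $C$ by finite amounts only and do not touch the submeasure, so nothing restores $\kappa_{s+1}\wedge s=\kappa_s\wedge s$, which is precisely what the $\leq_s$-step demands. Hence your sequence need not be a fusion sequence and the Fusion Lemma does not apply; you lose control of $\nu\bigl(\bigcup_s c_s\bigr)$ and cannot place the limit $X$ in $\nbhd{b,B,\nu}$. (Note also that the countable meet $\bigwedge_s\kappa_s$ underlying the Fusion Lemma is only asserted to exist in \ACA, whereas the present proposition is stated over \RCA.) The paper's one-extension argument sidesteps all of this.
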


\begin{proof}
  First note that if \(W^\phi_S(\bar{v},w,t)\) is as required for the formula \(\phi(\bar{v},w),\) then \(W^\psi_S(\bar{v},t) = W^\phi_S(\bar{v},\fst{t},\snd{t})\) is as required for the formula \(\forall w\,\phi(\bar{v},w).\) Therefore, it suffices to handle the case when \(\phi(\bar{v})\) is a \((a,A,\mu)\)-local \(\Sigma^0_1\) formula.

  Suppose first that \(\phi(\bar{v}) \equiv \exists w\,\theta(\bar{v},w),\) where \(\theta(\bar{v},w)\) is bounded. Following the lead of Proposition~\ref{P:BoundedWitnessing}, we would like to define \[W^\phi_S(\bar{v},0) = \sum_{w = 0}^\infty U(\bar{v},w),\] where \[U(\bar{v},w) = 1 \sub \sum_{u \leq w} T_{\lnot\theta}(\bar{v},u)\] and \(T_{\lnot\theta}(\bar{v},w)\) is as in Proposition~\ref{P:BoundedForcing}, and then continue by defining \[W^\phi_S(\bar{v},t+1) = W^\theta_S(\bar{v},W^\phi_S(\bar{v},0),t),\] where \(W^\theta_S(\bar{v},w,t)\) is as in Proposition~\ref{P:BoundedWitnessing}. Indeed, if the above infinite series converges, then \(W^\phi_S(\bar{v},0)\) is the first \(w\) such that \(\theta(\bar{v},w)\) holds and then \(\lambda t\,W_S^\phi(\bar{v},t+1)\) witnesses that fact. In order to sum the above infinite series effectively, we use the fact that \(U(\bar{v},w)\) can only switch from $1$ to $0$ once as $w$ increases. Thus, we can define \[W^\phi_S(\bar{v},0) = y \IFF \exists s\left(y < s \land y = {\textstyle\sum_{w \leq s} U(\bar{v},w)}\right).\] This is a \(\Sigma^0_1\) formula and the remaining values \(W^\phi_S(\bar{v},t+1)\) are defined as above.

  We show that if \((a,A,\mu)\) is such that \(\phi(\bar{x})\) is \((a,A,\mu)\)-local and \((a,A,\mu) \forces \phi(\bar{x})\) then \(\dom(\lambda t\,W^\phi_S(\bar{x},t)) \cap [a,A,\mu] \neq \varnothing.\) To see this, first find \((b,B,\nu) \leq (a,A,\mu)\) and \(w_0 \in \N\) such that \((b,B,\nu) \forces \theta(\bar{x},w_0).\) Then \[\dom(W^\phi_S(\bar{x},0)) \cap [b,B,\nu] \supseteq \dom(\lambda w\,U(\bar{x},w)) \cap [b,B,\nu],\] which implies that \(\dom(\lambda t\,W^\phi_S(\bar{x},t)) \cap [n,B,\nu]\) contains \[\dom(\lambda w\,U(\bar{x},w)) \cap \dom(\lambda t\,W^\theta_S(\bar{x},t)) \cap [b,B,\nu].\] Since \(U(\bar{v},w)\) and \(W^\theta_S(\bar{v},t)\) are \((a,A,\mu)\)-local, it follows that \[\dom(\lambda t\,W^\phi_S(\bar{x},t)) \cap [b,B,\nu] \neq \varnothing.\] We see immediately that \(\lambda t\,W^\phi_S(\bar{x},t)\) is in fact \((a,A,\mu)\)-local and then it is straightforward to check that \((a,A,\mu) \forces \phi_S(\lambda t\,W^\phi_S(\bar{x},t);\bar{x}).\)
\end{proof}

\subsection{Witnessing in \ACA}\label{S:Witnessing:ACA}

Our second main result of this section is that all sentences of the forcing language admit witnessing in \ACA, but only in a weak form as in Corollary~\ref{C:Sigma3Witnessing}.

\begin{theorem}[\ACA]\label{T:Pi3Witnessing}
  If \(\theta\) is an \((a,A,\mu)\)-local \(\Pi^0_3\) sentence of the forcing language such that \((a,A,\mu) \forces \theta,\) then there is an extension \((b,B,\nu) \leq (a,A,\mu)\) such that \((b,B,\nu) \Sforces \theta.\)
\end{theorem}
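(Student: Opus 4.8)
The plan is to first normalise \(\theta\): contracting blocks of like quantifiers by pairing, one may take \(\theta\equiv\forall x\,\exists y\,\phi(x,y)\) with \(\phi\) a \(\Pi^0_1\) formula of the forcing language. Tracing through Definition~\ref{D:Skolem} and using, for the inner \(\Pi^0_1\) formula \(\phi\), the uniform Skolem name \(W^\phi_S\) supplied by Proposition~\ref{P:Pi1Witnessing}, one sees that it suffices to produce an extension \((b,B,\nu)\le(a,A,\mu)\) together with a \((b,B,\nu)\)-local unary name \(Y\) such that \((b,B,\nu)\forces\forall x\,\phi(x,Y(x))\); the witnessing name \(W\) for \(\theta_S\) is then assembled by setting \(W(\pair x0)=Y(x)\) and \(W(\pair x{t+1})=W^\phi_S(x,Y(x),t)\). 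So the whole theorem reduces to uniformising, by a single local name and at the cost of passing to an extension, the witness of a forced \(\Sigma^0_2\) statement with \(\Pi^0_1\) matrix -- precisely what \RCA\ cannot do, which is why this theorem needs \ACA\ and only yields a Corollary~\ref{C:Sigma3Witnessing}-style conclusion.

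Next I would set up a fusion, carried out inside \ACA; minimisation is what legitimises the unbounded searches below and what guarantees that the submeasure of the limit condition exists. The basic move is this: given any condition \((c,C,\kappa)\le(a,A,\mu)\), monotonicity of forcing (immediate from Definition~\ref{D:ForcingRelation} by induction on complexity) gives \((c,C,\kappa)\forces\exists y\,\phi(x,y)\), i.e. \((c,C,\kappa)\nforces\forall w\,\lnot\phi(x,w)\); so when the \(\Pi^0_1\) formula \(\phi(x,\cdot)\) is fed into Proposition~\ref{P:Pi2Decision} -- after meeting \(\kappa\) with the auxiliary submeasure of Proposition~\ref{P:FunTotal} if one needs \(\phi\) to be local -- the ``\((\kappa\wedge\varrho)(C)=\infty\)'' alternative is impossible, and the other alternative returns an extension \((c',C',\kappa)\le(c,C,\kappa)\), \emph{with the submeasure unchanged}, together with a value \(y\) such that \((c',C',\kappa)\forces\phi(x,y)\). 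Because this move never touches the submeasure, a fusion built entirely from it and started at \((a,A,\mu)\) keeps \(\mu\) throughout, and by the Fusion Lemma it has a limit condition \((a,A^*,\mu)\) -- where \(A^*\) is the union of the finite parts along the fusion -- with \(A^*\subseteq C'\) for every infinite part \(C'\) occurring in the fusion.

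The hard part is to run these moves so that the many witnesses glue into a single \emph{local} name \(Y\). The trouble is that the witness produced by the basic move lives on an enlarged finite part, whereas a generic of the final condition need not contain that finite part; so a witness must be committed along every finite ``snapshot'' of the generic. Concretely I would fuse not over \(x\) alone but over all pairs \((x,d)\) with \(d\) a finite set: at the stage for \((x,d)\), if \(d\) is compatible with the current condition, apply the basic move to the condition obtained by adjoining \(d\) to the current finite part, obtaining a finite \(d'\supseteq d\) inside the current infinite part and a value \(y_{x,d}\) with \(\phi(x,y_{x,d})\) forced by the condition whose finite part is \(d'\) and whose infinite part is the one the move delivered; then put \((\tau,x,y_{x,d})\) into \(Y\) whenever the \(1\)-set of \(\tau\) contains \(d'\), organised (in a fixed processing order, committing the value at the first length at which some committed \(d'\) has been fully seen) so that \(Y\) is single-valued and upward closed, hence a genuine \(\Sigma^0_1\) name with the ground-model list of commitments as a parameter. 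Run this way, \(Y\) is correct: each committed \(d',y_{x,d}\) satisfies \(\widehat{\phi}(d',A^*;x,y_{x,d})\) by monotonicity and the shrinking of the infinite parts, and since by Proposition~\ref{P:Pi1Forcing} this \(\Pi^0_1\) datum passes to every continuation in \(U(a,A^*)\) of a \(\tau\) whose \(1\)-set contains \(d'\), one gets \((a,A^*,\mu)\forces\phi(x,Y(x))\) for every \(x\), hence \((a,A^*,\mu)\forces\forall x\,\phi(x,Y(x))\); and the bookkeeping is to be arranged so that along every generic of \((a,A^*,\mu)\) and for every \(x\) some committed \(d'\) is eventually seen, which is exactly the locality of \(Y\). (If one then needs \(Y\), and the superpositions built from it, to be local on the nose, meet \(\mu\) once more with the submeasure from Proposition~\ref{P:FunTotal}, as in the remark following that proposition.)

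I expect the main obstacle to be precisely this last point: organising the pair-indexed fusion so that the witnessing name is simultaneously single-valued, upward closed, correct, and -- above all -- local. Since a generic of the final condition need not contain the finite parts used to force the individual witnesses, a witness has to be decided along \emph{every} finite snapshot of the generic, and checking that the resulting domain really is dense in the Daguenet neighbourhood of \((a,A^*,\mu)\) is the delicate combinatorial core of the argument.
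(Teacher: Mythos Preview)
Your reduction to producing a \((b,B,\nu)\)-local name \(Y\) with \((b,B,\nu)\forces\forall x\,\phi(x,Y(x))\) is correct, and your ``basic move'' via Proposition~\ref{P:Pi2Decision} is sound: when the premise \((c,C,\kappa)\forces\exists y\,\phi(x,y)\) holds, the \((\kappa\wedge\varrho)(C)=\infty\) alternative is indeed excluded and the extension you obtain keeps the submeasure \(\kappa\). The gap is exactly where you suspect it, and your proposed bookkeeping does not close it. At the stage for \((x,d)\) you apply the move to \((a_{\mathrm{stage}}\cup d,A_{\mathrm{stage}},\mu)\), so the committed \(d'\) necessarily contains \(a_{\mathrm{stage}}\). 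Now take any \((c,C,\kappa)\leq(a_0,A^*,\mu)\): the set \(C\) is an arbitrary \(\mu\)-infinite subset of \(A^*=\bigcup_s a_s\) and need not contain any \(a_s\) for \(s\geq 1\) (e.g.\ \(C\) may omit the first element of \(a_1\setminus a_0\)). Since every \(X\in[c,C,\kappa]\) sits inside \(C\), no such \(X\) contains any committed \(d'\), hence \([c,C,\kappa]\cap\dom(Y(x))=\varnothing\) and \(Y\) is not \((a_0,A^*,\mu)\)-local. The alternative of applying the move with finite part \(d\) alone (so that \(d'\not\supseteq a_{\mathrm{stage}}\)) breaks correctness instead: then \(A^*\) need not lie inside the infinite part \(B'\) delivered by the move, so \((d',B',\mu)\forces\phi(x,y_{x,d})\) no longer descends to \((d',A^*,\mu)\). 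Your post-hoc appeal to Proposition~\ref{P:FunTotal} cannot rescue this, since that proposition is a dichotomy: when \(Y\) is genuinely non-local one lands in its first alternative, not the second.

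The paper resolves this tension by abandoning the constraint that the submeasure stay fixed. It introduces the approximate forcing relation \({\starforces}\) and the associated submeasures \(\lambda_{b,\bar{x}}\) (Lemma~\ref{L:StarSubmeasure}), and runs a fusion (Lemma~\ref{L:StarFusion}) in which, at the stage for \((b,\bar{x})\), one either \emph{shrinks \(A_s\)} to witness \((b,A_{s+1},\mu_s)\starforces\exists w\,\phi(\bar{x},w)\) \emph{or strengthens \(\mu_s\) to \(\mu_s\wedge(\lambda_{b,\bar{x}}\vee s)\)} to record that no future condition with finite part \(b\) can approximately force it. This two-sided commitment yields the pull-back property: whenever \((b,B,\nu)\leq(a_0,A,\mu)\) and \((b,B,\nu)\starforces\exists w\,\phi(\bar{x},w)\), already \((b,A,\mu)\starforces\exists w\,\phi(\bar{x},w)\). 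The witnessing name is then defined \emph{after} the fusion, directly from the \(\Pi^0_1\) relation \(R(\tau,\bar{v},w)\iff(a_0\cup\tau^{-1}(1),A-\tau^{-1}(0),\mu)\forces\phi(\bar{v},w)\), not from values committed along the way; locality follows immediately from the pull-back property. Your plan to ``keep \(\mu\) throughout'' discards precisely the mechanism that makes locality go through.
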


\noindent
Again, the main application of Theorem~\ref{T:Pi3Witnessing} is that \(\Sigma^0_2\) formulas of the forcing language can be uniformized by names.

\begin{corollary}[\ACA]\label{C:Sigma2Uniformization}
  If \(\phi(\bar{v},w)\) is an \((a,A,\mu)\)-local \(\Sigma^0_2\) formula of the forcing language such that \((a,A,\mu) \forces \forall\bar{v}\,\exists w\,\phi(\bar{v},w),\) then there are an extension \((b,B,\nu) \leq (a,A,\mu)\) and a \((b,B,\nu)\)-local name \(F(\bar{v})\) such that \((b,B,\nu) \forces \forall\bar{v}\,\phi(\bar{v},F(\bar{v})).\)
\end{corollary}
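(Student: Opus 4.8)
The plan is to obtain this from the $\Pi^0_3$ witnessing theorem in exactly the way Corollary~\ref{C:Sigma1Uniformization} was obtained from Theorem~\ref{T:Pi2Witnessing}, one level up. Since $\phi(\bar v,w)$ is $\Sigma^0_2$, the sentence $\forall\bar v\,\exists w\,\phi(\bar v,w)$ is, up to the usual contraction of adjacent existential quantifiers, a $\Pi^0_3$ sentence (so $\exists w\,\phi(\bar v,w)$ is again $\Sigma^0_2$ and $\forall\bar v\,\exists w\,\phi(\bar v,w)$ is $\Pi^0_3$), and it is $(a,A,\mu)$-local because $\phi$ is. So Theorem~\ref{T:Pi3Witnessing} applies to the hypothesis $(a,A,\mu)\forces\forall\bar v\,\exists w\,\phi(\bar v,w)$ and yields an extension $(b,B,\nu)\le(a,A,\mu)$ with $(b,B,\nu)\Sforces\forall\bar v\,\exists w\,\phi(\bar v,w)$; that is, $(b,B,\nu)\forces(\forall\bar v\,\exists w\,\phi(\bar v,w))_S(W)$ for some $(b,B,\nu)$-local unary name $W$.

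The next step is to unwind this Skolemized sentence. Chasing through Definition~\ref{D:Skolem} together with the abbreviation $\exists w\,\psi\equiv\lnot\forall w\,\lnot\psi$ gives
\[
  (\forall\bar v\,\exists w\,\phi(\bar v,w))_S(W)\ \equiv\ \forall\bar v\ \lnot\lnot\,\phi_S\bigl(\lambda t\,W(\pair{\bar v}{t+1});\ \bar v,\ W(\pair{\bar v}{0})\bigr),
\]
where $\pair{\bar v}{t}$ abbreviates the appropriate iterated pairing of the tuple $\bar v$ with $t$. Set $F(\bar v)=W(\pair{\bar v}{0})$ and $V(\bar v,t)=W(\pair{\bar v}{t+1})$. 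These are genuine names, being superpositions of $W$ with primitive recursive ground-model functions, and since the domain of a superposition contains the intersection of the domains of its constituents and canonical names have full domain, both $F$ and each section $\lambda t\,V(\bar x,t)$ inherit $(b,B,\nu)$-locality from $W$. Applying the $\forall$-clause of Definition~\ref{D:ForcingRelation} and then the double-negation clause of Proposition~\ref{P:ForcingRelation}, pointwise in $\bar v$, turns the forcing of the displayed sentence into $(b,B,\nu)\forces\forall\bar v\,\phi_S(\lambda t\,V(\bar v,t);\ \bar v,\ F(\bar v))$, and hence $(b,B,\nu)\forces\phi_S(\lambda t\,V(\bar x,t);\ \bar x,\ F(\bar x))$ for every $\bar x\in\N$.

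To conclude, observe that Skolemization commutes with the substitution of the name $F(\bar v)$ for the variable $w$ throughout $\phi$ (there is no variable capture, since $F(\bar v)$ mentions only the outer variables $\bar v$); thus for each $\bar x$ the last forcing statement is precisely $(b,B,\nu)\forces(\phi(\bar x,F(\bar x)))_S(\lambda t\,V(\bar x,t))$. As $\lambda t\,V(\bar x,t)$ is $(b,B,\nu)$-local, this means $(b,B,\nu)\Sforces\phi(\bar x,F(\bar x))$, and since the implication $(b,B,\nu)\Sforces\psi\THEN(b,B,\nu)\forces\psi$ always holds, we get $(b,B,\nu)\forces\phi(\bar x,F(\bar x))$ for all $\bar x$, i.e.\ $(b,B,\nu)\forces\forall\bar v\,\phi(\bar v,F(\bar v))$, as required. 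Note that \ACA\ is used only to invoke Theorem~\ref{T:Pi3Witnessing}; everything afterwards is elementary and goes through in \RCA. I expect the one genuine point of care to be precisely this bookkeeping: recognizing the intended Skolem function $F$ as the ``$0$th section'' $W(\pair{\bar v}{0})$ of the witnessing name, checking that the remaining sections really do Skolem-witness $\phi(\bar v,F(\bar v))$, and confirming that locality is preserved under superposition with canonical names.
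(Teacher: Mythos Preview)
Your proof is correct and is exactly the argument the paper has in mind: the corollary is stated without proof, as an immediate application of Theorem~\ref{T:Pi3Witnessing} in the same way Corollary~\ref{C:Sigma1Uniformization} follows from Theorem~\ref{T:Pi2Witnessing}. Your careful unwinding of the Skolemization (extracting $F(\bar v)=W(\pair{\bar v}{0})$ and using double-negation elimination pointwise) is precisely the bookkeeping the paper leaves implicit.
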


\noindent
By a bootstrapping process, we can obtain analogues of Theorem~\ref{T:Pi3Witnessing} and Corollary~\ref{C:Sigma2Uniformization} for all arithmetical formulas. However, the above suffices to show that \(F_\sigma\)-Mathias forcing preserves \ACA, so we will not push this further.

Before we get started with witnessing \(\Pi^0_3\) sentences, we will motivate one definition that would otherwise feel out of the blue. Consider the \((a,A,\mu)\)-local sentence \(\exists w\,\phi(w).\) If \((a,A,\mu) \Sforces \exists v\,\phi(v)\)  then, in particular, \((a,A,\mu) \forces \phi(F)\) for some \((a,A,\mu)\)-local nullary name \(F.\) We can then find \(\tau \in \tree{a,A}\) and \(y \in \N\) such that \((\tau,y) \in F\) and then \((a',A',\mu) \forces \phi(y)\) where \(a' = a \cup \tau^{-1}(1)\) and \(A' = A-\tau^{-1}(0).\) So the upshot of Skolemized forcing is that it allows for explicit witnessing of existential statements by making only finite changes to the original condition (and no change at all to the submeasure part). To analyze these finite changes, we introduce the notion of approximate forcing.

\begin{definition}
  Let \(\phi(w)\) be an \((a,A,\mu)\)-local \(\Pi^0_1\) formula of the forcing language. We define we define the \emph{approximate forcing relation} \((a,A,\mu) \starforces \exists w\,\phi(w)\) to hold if there are a \(y \in \N\) and an \(a' \in \fin{\N}\) such that \((a,A-a',\mu) \forces \phi(y).\)
\end{definition}

\noindent
Thus the approximate forcing relation \({\starforces}\) takes care of making finite changes to the infinite part of a condition. So, returning to the discussion that motivated this definition, if \((a,A,\mu) \Sforces \exists w\,\phi(w)\) then there are \(a \subseteq a' \subseteq A\) such that \((a',A,\mu) \starforces \exists w\,\phi(w).\)

The following lemma, which is key in most arguments using approximate forcing, is intended to be used in countable coded $\omega$-models, hence the weaker hypotheses.

\begin{lemma}[\WKL\ + \Bnd{\Sigma^0_2}]\label{L:StarSubmeasure}
  If \(\phi(\bar{v},w)\) is a \(\Pi^0_1\) formula of the forcing language, then there is a sequence \(\seq{\lambda_{a,\bar{x}}: a \in \fin{N},\bar{x} \in \N}\) of integer-valued submeasures such that the following statements hold for all conditions \((a,A,\mu)\) and all \(\bar{x} \in \N.\)
  \begin{itemize}
  \item If \(\phi(\bar{x},w)\) is \((a,A,\mu)\)-local and \((\mu\wedge\lambda_{a,\bar{x}})(A) < \infty\) then there is an extension \((a,A',\mu) \leq (a,A,\mu)\) such that \((a,A',\mu) \starforces \exists w\,\phi(\bar{x},w).\)
  \item If \(\phi(\bar{x},w)\) is \((a,A,\mu)\)-local and \((\mu\wedge\lambda_{a,\bar{x}})(A) = \infty\) then \((a,A,\mu) \nstarforces \exists w\,\phi(\bar{x},w).\)
  \end{itemize}
\end{lemma}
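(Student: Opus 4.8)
The plan is to reduce everything to the $\Pi^0_1$ forcing relation, which by Proposition~\ref{P:Pi1Forcing} is coded by a $\Pi^0_1$ formula $\widehat{\phi}(a,A;\bar v,w)$, and then to obtain $\lambda_{a,\bar x}$ from Proposition~\ref{P:Submeasure} as the submeasure attached to a suitable monotone $F_\sigma$ class. The key definition is
\[\mathcal{H}_{a,\bar x} = \set{B \subseteq \N : \exists y\,\widehat{\phi}(a, B \cup a; \bar x, y)}.\]
Since $\tree{a,C}$ shrinks as $C$ shrinks, $\widehat{\phi}(a,C;\bar x,y)$ is downward closed in $C$, so each class $[T_{a,\bar x,y}] = \set{B : \widehat{\phi}(a, B\cup a;\bar x,y)}$ is a monotone closed class and $\mathcal{H}_{a,\bar x} = \bigcup_y [T_{a,\bar x,y}]$ is a monotone $F_\sigma$ class presented by a sequence of binary trees computable uniformly in $a$ and $\bar x$. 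Feeding these to Proposition~\ref{P:Submeasure} (whose procedure is uniform) yields, uniformly in $a$ and $\bar x$, an integer-valued submeasure $\lambda_{a,\bar x}$ such that $\FID(\lambda_{a,\bar x})$ is the smallest free ideal containing $\mathcal{H}_{a,\bar x}$. The purpose of the ``$\cup a$'' is to keep the class downward closed while letting it remember the finite part of the condition; this is exactly what makes the first clause work.

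For the second clause I prove the contrapositive. If $(a,A,\mu) \starforces \exists w\,\phi(\bar x,w)$, fix $y$ and $a'' \in \fin{\N}$ disjoint from $a$ with $(a, A - a'', \mu) \forces \phi(\bar x, y)$. Since $\phi(\bar x,y)$ is local at this extension, Proposition~\ref{P:Pi1Forcing} gives $\widehat{\phi}(a, A-a''; \bar x, y)$, and as $a \subseteq A - a''$ we have $(A-a'')\cup a = A-a''$, so $A - a'' \in \mathcal{H}_{a,\bar x} \subseteq \FID(\lambda_{a,\bar x})$. Since $a'' \in \FID(\mu)$ and $A = a'' \cup (A-a'')$, Proposition~\ref{P:Meet} gives $(\mu\wedge\lambda_{a,\bar x})(A) < \infty$.

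For the first clause, assume $(\mu\wedge\lambda_{a,\bar x})(A) < \infty$. By Proposition~\ref{P:Meet}, $A = B_0 \cup B_1$ with $\mu(B_0) < \infty$ and $\lambda_{a,\bar x}(B_1) < \infty$; since $\mu(A) = \infty$, subadditivity forces $\mu(B_1) = \infty$, so $B_1$ is infinite and lies in the free ideal generated by $\mathcal{H}_{a,\bar x}$. By the structure of that ideal (Lemmas~\ref{L:Submeasure:2} and~\ref{L:Submeasure:3}), $B_1 = X_1 \cup \cdots \cup X_i$ where every infinite $X_\ell$ is contained in some $Y_\ell \in [T_{a,\bar x, y_\ell}]$; subadditivity again gives an $\ell_0$ with $\mu(X_{\ell_0}) = \infty$, whence $X_{\ell_0}$ is infinite, $X_{\ell_0} \subseteq Y_{\ell_0}$, and $\widehat{\phi}(a, Y_{\ell_0}\cup a; \bar x, y_{\ell_0})$. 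Since $X_{\ell_0}\cup a \subseteq Y_{\ell_0}\cup a$, downward closure gives $\widehat{\phi}(a, X_{\ell_0}\cup a; \bar x, y_{\ell_0})$. Put $A' = X_{\ell_0}\cup a$; then $a \subseteq A' \subseteq A$ and $\mu(A') \geq \mu(X_{\ell_0}) = \infty$, so $(a,A',\mu)$ is a condition with $(a,A',\mu) \leq (a,A,\mu)$ at which $\phi(\bar x, y_{\ell_0})$ is local, and Proposition~\ref{P:Pi1Forcing} yields $(a,A',\mu) \forces \phi(\bar x, y_{\ell_0})$; hence $(a,A',\mu) \starforces \exists w\,\phi(\bar x, w)$.

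The delicate point — and the reason for the ``$\cup a$'' in $\mathcal{H}_{a,\bar x}$ — is this last clause: neither the decomposition $A = B_0\cup B_1$ from Proposition~\ref{P:Meet} nor the further splitting of $B_1$ respects the finite part $a$, so the piece $X_{\ell_0}$ on which $\phi(\bar x,\cdot)$ gets decided need not contain $a$, and re-adjoining $a$ is harmless only because $\widehat{\phi}(a,\cdot\,;\bar x,y)$ was arranged to be insensitive to shrinking the infinite part. Everything else is routine: uniformity of the construction, the tree presentation of the $F_\sigma$ classes, the check that all of this stays within \WKL\ + \Bnd{\Sigma^0_2}, and the bookkeeping around locality, which never leaves the cone below $(a,A,\mu)$.
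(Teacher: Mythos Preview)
Your proof is correct and follows essentially the same approach as the paper: define $\lambda_{a,\bar x}$ via Proposition~\ref{P:Submeasure} applied to the monotone $F_\sigma$ class $\set{B : \exists y\,\widehat{\phi}(a,B\cup a;\bar x,y)}$, then argue each clause by unpacking the ideal structure. You spell out the first clause in more detail (going through Proposition~\ref{P:Meet} and Lemmas~\ref{L:Submeasure:2}--\ref{L:Submeasure:3} explicitly, and invoking downward closure of $\widehat{\phi}$ in its second argument), whereas the paper compresses this into the single assertion that $A$ decomposes as $A_0\cup A_1\cup\cdots\cup A_k$ with $A_0\in\FID(\mu)$ and $A_1,\dots,A_k\in\mathcal{F}_{a,\bar x}$; but the underlying argument is the same.
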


\begin{proof}
  Let \(\widehat{\phi}(a,A;\bar{v},w)\) be as in Proposition~\ref{P:Pi1Forcing}. Using the uniform procedure of Proposition~\ref{P:Submeasure}, compute an integer-valued submeasure \(\lambda_{a,\bar{x}}\) so that \(\FID(\lambda_{a,\bar{x}})\) is the smallest free ideal that contains the monotone \(F_\sigma\)-class \[\mathcal{F}_{a,\bar{x}} = \set{A \subseteq \N : \exists y\,\widehat{\phi}(a,A \cup a;\bar{x},y)}.\] 
Thus, in particular, if \(\phi(\bar{x},w)\) is \((a,A,\mu)\)-local and \((a,A,\mu) \starforces \exists w\,\phi(\bar{x},w),\) then \(\lambda_{a,\bar{x}}(A) < \infty.\) The second statement follows immediately from this observation. 

For the first statement, note that if \(\phi(\bar{x},w)\) is \((a,A,\mu)\)-local and \((\mu\wedge\lambda_{a,\bar{x}})(A) < \infty\) then there are \(A_0 \in \FID(\mu)\) and \(A_1,\dots,A_k \in \mathcal{F}_{a,\bar{x}}\) (\(k \in \N\)) such that \[A = A_0 \cup A_1 \cup \cdots \cup A_k.\] Since \(\mu(A) = \infty,\) there must be an \(i \in \set{1,\dots,k}\) such that \(\mu(A_i) = \infty.\) Then \((a,A_i \cup a,\mu) \leq (a,A,\mu)\) is a condition and certainly \((a,A_i \cup a,\mu) \starforces \exists w\,\phi(\bar{x},w),\) as required.
\end{proof}

In a set theoretic context, the next result would be the final part in showing that \(F_\sigma\)-Mathias forcing satisfies Axiom~A.~\cite{Baumgartner} However, because of our context, the statement and proof are quite different from its set theoretic counterpart.

\begin{lemma}[\ACA]\label{L:StarFusion}
  Let \(\phi(\bar{v},w)\) be an \((a_0,A_0,\mu_0)\)-local \(\Pi^0_1\) formula of the forcing language. There is a condition \((a_0,A,\mu) \leq (a_0,A_0,\mu_0)\) such that, for all \((b,B,\nu) \leq (a_0,A,\mu)\) and all \(\bar{x} \in \N,\) we have \((b,B,\nu) \starforces \exists w\,\phi(\bar{x},w) \IFF (b,A,\mu) \starforces \exists w\,\phi(\bar{x},w).\)
\end{lemma}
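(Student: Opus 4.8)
The plan is to build $(a_0,A,\mu)$ by a length-$\omega$ fusion of the kind described after Definition~\ref{D:FsigmaMathiasFocing}, where stage $s$ settles the behaviour of the approximate forcing relation for a single pair $(b,\bar x)$ with $a_0\subseteq b\in\fin{\N}$, $\bar x\in\N^k$; fix an enumeration $\seq{b_s,\bar x_s}_{s=0}^\infty$ of all such pairs, and for each pair let $\lambda_{b,\bar x}$ be the integer-valued submeasure of Lemma~\ref{L:StarSubmeasure}, so that $\FID(\lambda_{b,\bar x})$ is the least free ideal containing the monotone $F_\sigma$-class $\mathcal{F}_{b,\bar x}=\set{X:\exists y\,\widehat{\phi}(b,X\cup b;\bar x,y)}$ (with $\widehat{\phi}$ as in Proposition~\ref{P:Pi1Forcing}). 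Two observations are used throughout. First, \emph{monotonicity}: $\widehat{\phi}(b,\cdot\,;\bar x,y)$ is monotone in its set argument, since $\tree{b,X'}\subseteq\tree{b,X}$ whenever $b\subseteq X'\subseteq X$; hence once $(b,X,\nu)\forces\phi(\bar x,y)$ holds it persists under every extension preserving the finite part $b$. Second, a \emph{dictionary}: unwinding the definition of $\starforces$, a condition $(b,B,\nu)\starforces\exists w\,\phi(\bar x,w)$ exactly when some cofinite subset of $B$ lies in $\mathcal{F}_{b,\bar x}$; as $\mathcal{F}_{b,\bar x}$ is downward closed, this means that \emph{some} $(b,B,\nu)\le(a_0,A,\mu)$ approximately forces $\exists w\,\phi(\bar x,w)$ iff $A$ has an infinite subset in $\mathcal{F}_{b,\bar x}$, while $(b,A,\mu)$ approximately forces it iff $A$ has a cofinite subset in $\mathcal{F}_{b,\bar x}$.

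By the dictionary, the ``$\Leftarrow$'' half of the biconditional is automatic (indeed for any reservoir, by monotonicity), and the ``$\Rightarrow$'' half is the assertion that for each $(b,\bar x)$ with $b\subseteq A$: \emph{if} $A$ has an infinite subset in $\mathcal{F}_{b,\bar x}$, \emph{then} it has a cofinite one. So the fusion should thin the reservoir until, for every pair, it is either \emph{positive} for $\mathcal{F}_{b,\bar x}$ (some, hence a cofinite, subset is in it) or \emph{negative} (no infinite subset is in it, i.e.\ no infinite $\lambda_{b,\bar x}$-finite subset). At stage $s$, with current condition $(a_s,A_s,\mu_s)$ and pair $(b_s,\bar x_s)$ (where $b_s\subseteq A_s$; otherwise take a trivial $\le_s$-step), I would decide the $\Sigma^0_2$ --- hence in \ACA\ decidable --- question whether $(\mu_s\wedge\lambda_{b_s,\bar x_s})(A_s)<\infty$. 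If it is finite, Proposition~\ref{P:Meet} gives $A_s=Y\cup W$ with $\mu_s(Y)<\infty$ and $\lambda_{b_s,\bar x_s}(W)<\infty$; keeping $\mu_s$ above the counting measure of $A_s$ (part of the running invariant) forces $Y$ finite, so $W$ is cofinite and $\mu_s$-infinite, and decomposing $W\in\FID(\lambda_{b_s,\bar x_s})$ via Lemmas~\ref{L:Submeasure:2}--\ref{L:Submeasure:3} yields an infinite $C\in\mathcal{F}_{b_s,\bar x_s}$, with witness $y$ say; set $A_{s+1}=C\cup b_s$ (a positive step) and raise $\mu$ above level $s$ to again dominate the counting measure and to carve out a fresh finite part $a_{s+1}$. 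If instead $(\mu_s\wedge\lambda_{b_s,\bar x_s})(A_s)=\infty$, this is a negative step: thin $A_s$ to an $A_{s+1}$ with no infinite $\lambda_{b_s,\bar x_s}$-finite subset, raise $\mu$ as before, and record $(b_s,\bar x_s)$ as negative. The running invariant --- $\mu$ dominates the counting measure of the reservoir, and the reservoir has no infinite $\lambda_{b_t,\bar x_t}$-finite subset for any negative $t$ --- survives both moves, since enlarging $\mu$ and passing to a subset only help it (Proposition~\ref{P:Meet}, monotonicity). The fusion lemma then delivers $(a_0,A,\mu)\le(a_0,A_0,\mu_0)$ with $A=\bigcup_s a_s\subseteq\bigcap_s A_s$.

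For the verification, ``$\Leftarrow$'' is monotonicity, as noted. For ``$\Rightarrow$'', suppose $(b,B,\nu)\le(a_0,A,\mu)$ and $(b,B,\nu)\starforces\exists w\,\phi(\bar x,w)$; then $b\subseteq A$, so $b\subseteq A_s$ at the stage $s$ treating $(b,\bar x)$. If stage $s$ was positive, then $\widehat{\phi}(b,A;\bar x,y)$ holds by monotonicity (as $b\subseteq A\subseteq A_{s+1}$), so $(b,A,\mu)\forces\phi(\bar x,y)$ and hence $(b,A,\mu)\starforces\exists w\,\phi(\bar x,w)$. If stage $s$ was negative, then by the invariant $B\subseteq A$ has no infinite $\lambda_{b,\bar x}$-finite subset; together with $\nu\ge\mu$ dominating the counting measure of $A$ this gives $(\nu\wedge\lambda_{b,\bar x})(B)=\infty$ (using Proposition~\ref{P:Meet}), so Lemma~\ref{L:StarSubmeasure} yields $(b,B,\nu)\nstarforces\exists w\,\phi(\bar x,w)$, contradicting the hypothesis. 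Either way, $(b,A,\mu)\starforces\exists w\,\phi(\bar x,w)$, which completes the proof.

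The step I expect to be the real obstacle is the \emph{negative} one --- the analogue of Baumgartner's condition~(4): from a reservoir $A$ with $(\mu\wedge\lambda)(A)=\infty$, thin $A$ to a set with no infinite $\lambda$-finite subset, keeping it $\mu$-infinite and compatible with the fusion. Simply enlarging $\mu$ to dominate $\lambda$ is not enough, since it leaves $\lambda$-finite infinite subsets $\mu$-finite; one genuinely has to remove them from the reservoir, and verifying that this can be done --- and is compatible with the commitments made at all the earlier negative stages at once --- is where the lattice machinery of Section~\ref{S:Submeasures} (Proposition~\ref{P:Meet}, the uniform Mazur decomposition of Proposition~\ref{P:Submeasure}, and \Bnd{\Sigma^0_2} inside \ACA) does the work, and I expect it to be the bulk of the argument.
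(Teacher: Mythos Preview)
Your overall architecture --- a fusion sequence driven by Lemma~\ref{L:StarSubmeasure}, with a positive/negative dichotomy at each stage --- matches the paper, and your positive step and your verification of the forward implication in that case are essentially correct.

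The gap is entirely in the negative step. You propose to thin the reservoir to an $A_{s+1}\subseteq A_s$ having \emph{no infinite $\lambda_s$-finite subset}, and you flag this as the hard part. In fact it is impossible in general. Take $A_s=\N$, let $\mu_s$ be the counting submeasure, and let $\lambda_s$ code the summable ideal $\set{X:\sum_{n\in X}1/(n{+}1)<\infty}$. Then $(\mu_s\wedge\lambda_s)(\N)=\infty$ (so we are in the negative case), yet every infinite set has an infinite summable subset, so no admissible $A_{s+1}$ exists. Such $\lambda_s$ do occur as $\lambda_{b,\bar x}$: since $\widehat{\phi}(b,\cdot\,;\bar x,y)$ can carve out an arbitrary $\Pi^0_1$ class of reservoirs, the classes $\mathcal{F}_{b,\bar x}$ can generate any free $F_\sigma$-ideal.

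The sentence ``simply enlarging $\mu$ to dominate $\lambda$ is not enough, since it leaves $\lambda$-finite infinite subsets $\mu$-finite; one genuinely has to remove them from the reservoir'' is precisely where the reasoning goes wrong. Making $\lambda_s$-finite sets $\mu$-finite \emph{is} enough, because the submeasure clause in the ordering already excludes $\mu$-finite sets from ever being the reservoir of an extension: if $(b,B,\nu)\leq(a_0,A,\mu)$ then $\mu(B)=\infty$. That is the whole point of carrying the third coordinate. The paper's negative step is accordingly trivial: set $\mu_{s+1}=\mu_s\wedge(\lambda_s\vee s)$ and keep $A_{s+1}=A_s$. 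In the verification, if $(b,B,\nu)\starforces\exists w\,\phi(\bar x,w)$ then $\lambda_{b,\bar x}(B)<\infty$ by Lemma~\ref{L:StarSubmeasure}; had the relevant stage been negative, $\mu\leq\lambda_{b,\bar x}\vee s$ would give $\mu(B)<\infty$, contradicting $\mu(B)=\infty$. So the stage was positive, and one finishes as you do.

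Two smaller points. First, the paper enumerates pairs with infinite repetition and only acts when $b_s\subseteq a_s$; with your weaker trigger $b_s\subseteq A_s$ a single enumeration suffices, since $A\subseteq A_s$ for every $s$. Second, the paper carries out the construction inside a countable coded $\beta$-submodel so that the stage-by-stage $\Sigma^0_2$ decisions and the applications of Lemma~\ref{L:StarSubmeasure} are uniformly available; you should say how you arrange this within \ACA.
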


\begin{proof}
  Let \(M\) be a countable coded strict \(\beta\)-submodel (hence \(M \models \WKL\)) that contains \((a_0,A_0,\mu_0)\) and all names that occur in \(\phi.\) \cite[Theorem~VIII.2.11]{Simpson} We will define by induction a fusion sequence \(\seq{a_s,A_s,\mu_s}_{s=0}^\infty\) of conditions each of which is in \(M\) (but the whole sequence is not necessarily in \(M\)).

  Work inside \(M\) to set things up. Let \(\seq{b_s,\bar{x}_s}_{s=0}^\infty\) be an enumeration all tuples \(b,\bar{x}\) with \(b \in \fin{\N}\) and \(\bar{x} \in \N,\) with each tuple repeated infinitely often. For each \(s \in \N,\) let \(\lambda_s = \lambda_{b_s,\bar{x}_s}\) where \(\lambda_{b_s,\bar{x}_s}\) is as in Lemma~\ref{L:StarSubmeasure} for \(\phi(\bar{v},w).\)

  Working outside \(M.\) At stage \(s,\) define the condition \((a_{s+1},A_{s+1},\mu_{s+1}) \in M\) as follows.
  \begin{description}
  \item[\normalfont\emph{Case \(a_0 \nsubseteq b_s\) or \(b_s \nsubseteq a_s\)}:]
    Define \(\mu_{s+1} = \mu_s\) and \(A_{s+1} = A_s.\)
  \item[\normalfont\emph{Case \(a_0 \subseteq b_s \subseteq a_s\) and \((\mu_s\wedge\lambda_s)(A_s) = \infty\)}:]
    Define \(\mu_{s+1} = \mu_s\wedge(\lambda_s\vee s)\) and \(A_{s+1} = A_s.\)
  \item[\normalfont\emph{Case \(a_0 \subseteq b_s \subseteq a_s\) and \((\mu_s\wedge\lambda_s)(A_s) < \infty\)}:]
    Define \(\mu_{s+1} = \mu_s\) and, by applying Lemma~\ref{L:StarSubmeasure} in \(M,\) pick \(a_s \subseteq A_{s+1} \subseteq A_s\) in \(M\) such that \((b_s,A_{s+1},\mu_{s+1}) \starforces \exists w\,\phi(\bar{x}_s,w).\)
  \end{description}
  Finally, pick \(a_s \subseteq a_{s+1} \subseteq A_{s+1}\) such that \(\mu_{s+1}(a_{s+1}) \geq s+1\) so that \((a_{s+1},A_{s+1},\mu_{s+1}) \leq_{s+1} (a_s,A_s,\mu_s).\)

  Once the sequence has been constructed, let \(\mu = \bigwedge_{s=0}^\infty \mu_s\) and \(A = \bigcup_{s=1}^\infty a_s\) as per the Fusion Lemma. Thus \((a_0,A,\mu)\) is an extension of \((a_0,A_0,\mu_0).\)

  Suppose \((b,B,\nu) \leq (a_0,A,\mu)\) is a condition such that \((b,B,\nu) \starforces \exists w\,\phi(\bar{x},w).\) Choose \(s\) so that \(b_s \subseteq a_s\) and \(\bar{x} = \bar{x}_s.\) Then \(\lambda_s(B) = \lambda_{b,\bar{x}}(B) < \infty\) by Lemma~\ref{L:StarSubmeasure} and the definition of \(\lambda_s.\) Since \(\mu(B) = \nu(B) = \infty,\) we must have fallen into the third case at stage \(s\) of the construction. Thus \((b,A_{s+1},\mu_{s+1}) \starforces \exists w\,\phi(\bar{x},w)\) and, since \((b,A,\mu) \leq (b,A_{s+1},\mu_{s+1}),\) we see that \((b,A,\mu) \starforces \exists w\,\phi(\bar{x},w).\)
\end{proof}

We finally arrive at the proof of Theorem~\ref{T:Pi3Witnessing}. As before, we actually prove a stronger version which shows that the witnessing names can be chosen uniformly.

\begin{proposition}[\ACA]\label{P:Pi3Witnessing}
  Let \(\theta(\bar{v})\) be an \((a_0,A_0,\mu_0)\)-local \(\Pi^0_3\) formula of the forcing language. There are an extension \((a_0,A,\mu) \leq (a_0,A_0,\mu_0)\) and a partial name \(W^\theta_S(\bar{v},t)\) such that, for all extensions \((b,B,\nu) \leq (a_0,A,\mu)\) and all \(\bar{x} \in \N,\) if \((b,B,\nu) \forces \theta(\bar{x})\) then \(\lambda t\,W^\theta_S(\bar{x},t)\) is \((b,B,\nu)\)-local and \((b,B,\nu) \forces \theta_S(\lambda t\,W^\theta_S(\bar{x},t);\bar{x}).\)
\end{proposition}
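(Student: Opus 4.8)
The plan is to imitate the incremental, quantifier-by-quantifier scheme of Subsection~\ref{S:Witnessing:RCA} that proved Theorem~\ref{T:Pi2Witnessing}, but now with the fusion Lemma~\ref{L:StarFusion} available to manufacture the single extension $(a_0,A,\mu)$ below which all the witnessing must succeed. Write the $\Pi^0_3$ formula as $\theta(\bar v)\equiv\forall u\,\exists w\,\rho(\bar v,u,w)$ where $\rho$ is a $\Pi^0_1$ formula of the forcing language. Exactly as in the opening of the proof of Proposition~\ref{P:Pi2Witnessing}, the leading $\forall u$ is harmless: if I produce an extension $(a_0,A,\mu)\le(a_0,A_0,\mu_0)$ and a partial name $W_S(\bar v,u,t)$ that witness the $\Sigma^0_2$ formula $\exists w\,\rho(\bar v,u,w)$ uniformly in $u$ (folding $u$ into the parameter tuple $\bar v$), then setting $W^\theta_S(\bar v,\langle u,t\rangle)=W_S(\bar v,u,t)$ handles $\theta$. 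So it suffices to treat $\psi(\bar v)\equiv\exists w\,\rho(\bar v,w)$ with $\rho$ a $\Pi^0_1$ formula that is $(a_0,A_0,\mu_0)$-local; this is precisely the case the $\RCA$ argument of Theorem~\ref{T:Pi2Witnessing} could not reach, since $\Sigma^0_2\not\subseteq\Pi^0_2$ and merely forcing $\exists w\,\rho(\bar x,w)$ does not single out a witness.

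I begin by applying Lemma~\ref{L:StarFusion} to $\rho(\bar v,w)$ to obtain $(a_0,A,\mu)\le(a_0,A_0,\mu_0)$ below which approximate forcing of $\exists w\,\rho(\bar x,w)$ is absolute: whether $(b,B,\nu)\starforces\exists w\,\rho(\bar x,w)$ depends only on $(b,A,\mu)$, and in fact — examining the fusion's third case, where $A_{s+1}$ is shrunk to witness the existential and, after absorbing the finite correction into $A_{s+1}$ and using $A\subseteq A_{s+1}$ — whenever this holds one has an \emph{explicit} $y\in\N$ with $(b,A,\mu)\forces\rho(\bar x,y)$, while in the complementary case $(b,A,\mu)$, hence every $(b,B,\nu)\le(a_0,A,\mu)$ with that $b$, forces $\forall w\,\lnot\rho(\bar x,w)$. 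The heart of the proof is then to deduce, from $(b,B,\nu)\forces\exists w\,\rho(\bar x,w)$, that we are in the former case: otherwise $(b,B,\nu)$ would extend a condition forcing $\forall w\,\lnot\rho(\bar x,w)$, contradicting $(b,B,\nu)\forces\exists w\,\rho(\bar x,w)$, which demands a further extension forcing $\rho(\bar x,y)$ for some $y$. The delicate point — and where $\ACA$ is genuinely used, via the strict $\beta$-submodel of Lemma~\ref{L:StarFusion} — is the bookkeeping with the submeasures $\lambda_{c,\bar x}$ of Lemma~\ref{L:StarSubmeasure} across different finite parts $c$, which is exactly what the fusion's simultaneous treatment of all pairs $(c,\bar x)$ is designed to control; this is also the step that departs substantively from the $\Pi^0_2$ case.

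It remains to package this into the witnessing name. By Proposition~\ref{P:Pi1Witnessing}, fix the uniform Skolem name $W^\rho_S(\bar v,w,t)$ for the $\Pi^0_1$ formula $\rho$; then it suffices to build a unary name $V(\bar v)$ such that whenever $(b,B,\nu)\le(a_0,A,\mu)$ and $(b,B,\nu)\forces\exists w\,\rho(\bar x,w)$, the name $V$ is $(b,B,\nu)$-local and $(b,B,\nu)\forces\rho(\bar x,V(\bar x))$, for then, with $W_S(\bar v,0)=V(\bar v)$ and $W_S(\bar v,t+1)=W^\rho_S(\bar v,V(\bar v),t)$, chasing Definition~\ref{D:Skolem} and using Proposition~\ref{P:ForcingRelation} to discharge the double negation in $(\exists w\,\rho)_S$ shows $(b,B,\nu)\forces\rho(\bar x,V(\bar x))$ is exactly $(b,B,\nu)\forces\psi_S(\lambda t\,W_S(\bar x,t);\bar x)$. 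The name $V$ reads finite information $\tau$ from the generic and, at the first prefix $\sigma\subseteq\tau$ for which the explicit witness furnished by the previous step is available for the committed finite part, records that witness; the absoluteness arranged by Lemma~\ref{L:StarFusion} guarantees that once such a witness appears it is never revoked, so $V$ is monotone in $\tau$ and a bona fide partial name, the previous step makes $V$ local, and transfer of the certification from $(a_0,A,\mu)$ down to $(b,B,\nu)$ yields $(b,B,\nu)\forces\rho(\bar x,V(\bar x))$. I expect the main obstacle to be exactly these last two steps — establishing the dichotomy uniformly over all $(b,B,\nu)$ and all $\bar x$, and then verifying that $V$ is simultaneously local for every relevant $(b,B,\nu)$ and sound over all of $\tree{b,B}$ rather than just over $(a_0,A,\mu)$ itself — since reconciling soundness-for-all-extensions with density-along-the-generic is precisely what the fusion machinery of Subsection~\ref{S:Witnessing:ACA} is built to make possible.
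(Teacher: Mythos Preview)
Your approach is essentially the paper's: reduce to the \(\Sigma^0_2\) case \(\exists w\,\rho(\bar v,w)\), apply Lemma~\ref{L:StarFusion} to get \((a_0,A,\mu)\), build the witness name by searching prefixes \(\tau\) of the generic for the first \(y\) with \((a_0\cup\tau^{-1}(1),\,A\setminus\tau^{-1}(0),\,\mu)\forces\rho(\bar x,y)\), and compose with \(W^\rho_S\) from Proposition~\ref{P:Pi1Witnessing}; locality is then checked exactly as you describe, by passing from \((b,B,\nu)\forces\exists w\,\rho(\bar x,w)\) to some \((c,C,\kappa)\forces\rho(\bar x,y)\), hence \((c,A,\mu)\starforces\exists w\,\rho(\bar x,w)\), which produces the needed \(\tau\). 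One caution: the claim in your second paragraph that the finite correction in \(\starforces\) can be absorbed so that \((b,A,\mu)\forces\rho(\bar x,y)\) holds outright does not quite go through (the correction set \(a'\) may meet \(a_s\) in the fusion, so you cannot simply shrink \(A_{s+1}\) by \(a'\)), and your complementary-case assertion that \((b,A,\mu)\forces\forall w\,\lnot\rho(\bar x,w)\) is also too strong (extensions with larger first part \(c\supsetneq b\) may still force \(\rho\)); fortunately neither overclaim is needed, since your third-paragraph construction already reads the full \(\tau\) --- both \(\tau^{-1}(1)\) and \(\tau^{-1}(0)\) --- which is exactly what the paper does and what makes the argument work.
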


\begin{proof}
  First note that if \((a_0,A,\mu)\) and \(W^\theta_S(\bar{v},w,t)\) are as required for the formula \(\theta(\bar{v},w),\) then \((a_0,A,\mu)\) and \(W^\psi_S(\bar{v},t) = W^\theta_S(\bar{v},\fst{t},\snd{t})\) are as required for the formula \(\forall w\,\theta(\bar{v},w).\) Therefore, it suffices to handle the case when \(\theta(\bar{v})\) is a \((a_0,A_0,\mu_0)\)-local \(\Sigma^0_2\) formula.

  Suppose \(\theta(\bar{v}) \equiv \exists w\,\phi(\bar{v},w),\) where \(\phi(\bar{v},w)\) is \(\Pi^0_1.\) Let \((a_0,A,\mu) \leq (a_0,A_0,\mu_0)\) be as in Lemma~\ref{L:StarFusion}. We proceed to define the partial name \(W^\theta_S(\bar{v},t).\) First consider the relation \[R(\tau,\bar{v},w) \IFF \tau \in \tree{a_0,A} \land (a_0 \cup \tau^{-1}(1), A-\tau^{-1}(0), \mu) \forces \phi(\bar{v},w).\] This is a \(\Pi^0_1\) relation by Proposition~\ref{P:Pi1Forcing}. Let \(\seq{\tau_i,y_i}_{i=0}^\infty\) be an enumeration of \(\tree{a_0,A}\times\N\) such that if \(\tau_i \subseteq \tau_j\) and \(y_i \leq y_j\) then \(i \leq j.\) Define the partial name \(F\) by \((\tau,\bar{x},y) \in F\) iff \(y = y_i,\) where \(i\) is minimal with the property that \(\tau_i \subseteq \tau\) and \(R(\tau_i,\bar{x},y_i).\) (Given our choice of enumeration, it is easy to check that this is indeed a partial name.) Finally, define the partial name \(W^\theta_S(\bar{v},t)\) by \[W^\theta_S(\bar{v},t) = \begin{cases} F(\bar{v}) & \text{when $t = 0$,} \\ W^\phi_S(\bar{v},F(\bar{v}),t-1) & \text{when $t \geq 1$,} \end{cases}\] where \(W^\phi_S(\bar{v},w,t)\) is as in Proposition~\ref{P:Pi1Witnessing}. 

  It is clear that if \((b,B,\nu) \leq (a_0,A,\mu)\) and \(\bar{x} \in \N\) are such that \(\lambda t\,W^\theta_S(\bar{x},t)\) is \((b,B,\nu)\)-local, then \((b,B,\nu) \forces \theta_S(\lambda t\,W^\theta_S(\bar{x},t);\bar{x}).\) Therefore, it suffices to show that if \((b,B,\nu) \forces \theta(\bar{x})\) then \(\lambda t\,W^\theta_S(\bar{x},t)\) is \((b,B,\nu)\)-local. Given Proposition~\ref{P:Pi1Witnessing}, we only need to show that \(\nbhd{b,B,\nu} \cap \dom F(\bar{x}) \neq \varnothing\) (where \(F(\bar{x})\) is considered as a nullary name).

  Since \((b,B,\nu) \forces \theta(\bar{x})\) we can find \((c,C,\kappa) \leq (b,B,\nu)\) and \(y \in \N\) such that \((c,C,\kappa) \forces \phi(\bar{x},y).\) By Lemma~\ref{L:StarFusion}, we then have \((c,A,\mu) \starforces \exists w\,\phi(\bar{x},w).\) Thus, there are a \(\tau \in \tree{a_0,A}\) and a \(y' \in \N\) such that \(c = \tau^{-1}(1)\) and \((\tau,\bar{x},y') \in F,\) which means that \(C - \tau^{-1}(0) \in \nbhd{b,B,\nu} \cap \dom F(\bar{x}).\)  
\end{proof}

The last proposition is all that is needed to show that \(F_\sigma\)-Mathias forcing preserves \ACA, but the same process can be continued through the entire arithmetical hierarchy. We will not prove the following proposition since it can essentially be deduced from Theorem~\ref{T:PresACA} and the fact that \ACA\ proves the existence of Skolem functions for arithmetical facts.

\begin{proposition}[\ACA]\label{P:AllWitnessing}
  Let \(\theta(\bar{v})\) be an \((a_0,A_0,\mu_0)\)-local formula of the forcing language. There are an extension \((a_0,A,\mu) \leq (a_0,A_0,\mu_0)\) and a partial name \(W^\theta_S(\bar{v},t)\) such that, for all extensions \((b,B,\nu) \leq (a_0,A,\mu)\) and all \(\bar{x} \in \N,\) if \((b,B,\nu) \forces \theta(\bar{x})\) then \(\lambda t\,W^\theta_S(\bar{x},t)\) is \((b,B,\nu)\)-local and \((b,B,\nu) \forces \theta_S(\lambda t\,W^\theta_S(\bar{x},t);\bar{x}).\)
\end{proposition}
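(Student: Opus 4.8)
The plan is to prove this by induction on the quantifier complexity of $\theta$, re-running the proof of Proposition~\ref{P:Pi3Witnessing} one level higher each time. All the \RCA\ results of Section~\ref{S:Witnessing:RCA} hold in \ACA, so the cases where $\theta$ is bounded, $\Pi^0_1$, or $\Pi^0_2$ are covered by Propositions~\ref{P:BoundedWitnessing}, \ref{P:Pi1Witnessing}, and~\ref{P:Pi2Witnessing} (with trivial extension), and $\Pi^0_3$ is Proposition~\ref{P:Pi3Witnessing}; these serve as base cases. In the inductive step, a leading universal quantifier is absorbed verbatim as in the opening of the proof of Proposition~\ref{P:Pi3Witnessing}: from data $(a_0,A,\mu)$, $W^\theta_S(\bar v,w,t)$ for $\theta(\bar v,w)$ one obtains $W^\psi_S(\bar v,t)=W^\theta_S(\bar v,\fst t,\snd t)$ for $\psi(\bar v)\equiv\forall w\,\theta(\bar v,w)$. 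So it remains to treat $\theta(\bar v)\equiv\exists w\,\phi(\bar v,w)$, where the proposition already holds for the formula $\phi$.

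For this I would apply the induction hypothesis to $\phi$, obtaining $(a_0,A',\mu')\leq(a_0,A_0,\mu_0)$ and a partial name $W^\phi_S(\bar v,w,t)$ such that, below $(a_0,A',\mu')$, forcing $\phi(\bar x,w)$ makes $\lambda t\,W^\phi_S(\bar x,w,t)$ local and forces $\phi_S(\lambda t\,W^\phi_S(\bar x,w,t);\bar x,w)$. By Proposition~\ref{P:Pi1Skolem}, the sentence $\phi_S(\lambda t\,W^\phi_S(\bar x,w,t);\bar x,w)$ is, after the usual prenexing of its positive universal quantifiers, an honest $\Pi^0_1$ formula of the forcing language in $\bar v,w$, so the approximate-forcing machinery applies to it: Lemma~\ref{L:StarFusion} yields a further extension $(a_0,A,\mu)\leq(a_0,A',\mu')$ on which the relation $(b,B,\nu)\starforces\exists w\,\phi_S(\lambda t\,W^\phi_S(\bar x,w,t);\bar x,w)$ is stable under extension. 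Then, exactly as in Proposition~\ref{P:Pi3Witnessing}, I would let $R(\tau,\bar v,w) \IFF \tau\in\tree{a_0,A}\land(a_0\cup\tau^{-1}(1),A-\tau^{-1}(0),\mu)\forces\phi_S(\lambda t\,W^\phi_S(\bar v,w,t);\bar v,w)$, which is a $\Pi^0_1$ relation, let $F$ be the partial name that reads off along the generic the first pair $(\tau_i,y_i)$ from a suitable enumeration of $\tree{a_0,A}\times\N$ with $\tau_i\subseteq\tau$ and $R(\tau_i,\bar v,y_i)$, and set $W^\theta_S(\bar v,0)=F(\bar v)$ and $W^\theta_S(\bar v,t+1)=W^\phi_S(\bar v,F(\bar v),t)$. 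The verification then follows Proposition~\ref{P:Pi3Witnessing} line for line: given $(b,B,\nu)\forces\theta(\bar x)$, a witness $(c,C,\kappa)\leq(b,B,\nu)$ with $(c,C,\kappa)\forces\phi(\bar x,y)$ is upgraded by the induction hypothesis to $(c,C,\kappa)\forces\phi_S(\lambda t\,W^\phi_S(\bar x,y,t);\bar x,y)$, hence $(c,C,\kappa)\starforces\exists w\,\phi_S(\lambda t\,W^\phi_S(\bar x,w,t);\bar x,w)$, hence $(c,A,\mu)\starforces\exists w\,\phi_S(\lambda t\,W^\phi_S(\bar x,w,t);\bar x,w)$ by stability; this exhibits a $\tau\in\tree{a_0,A}$ with $\tau^{-1}(1)=c$ and an $R$-witness, placing $C-\tau^{-1}(0)$ in $\nbhd{b,B,\nu}\cap\dom F(\bar x)$, and together with the locality of $W^\phi_S$ this yields $(b,B,\nu)$-locality of $\lambda t\,W^\theta_S(\bar x,t)$ and hence $(b,B,\nu)\forces\theta_S(\lambda t\,W^\theta_S(\bar x,t);\bar x)$.

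The delicate point --- and the reason the statement is worth isolating rather than proving in passing --- is locality bookkeeping: unlike the honest $\Pi^0_1$ formula handled in Proposition~\ref{P:Pi3Witnessing}, the Skolemized formula $\phi_S(\lambda t\,W^\phi_S(\bar v,w,t);\bar v,w)$ carries the name $W^\phi_S$, which the induction hypothesis guarantees to be local only at conditions that already force $\phi$. One therefore has to arrange, either via Proposition~\ref{P:FunTotal} or inside the fusion construction of Lemma~\ref{L:StarFusion}, that $W^\phi_S$ be local wherever the argument requires it, and one must collapse the finitely many extensions produced along the syntax tree of $\theta$ into a single extension $(a_0,A,\mu)$. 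This is routine but fiddly, which is why the alternative route hinted at in the remark preceding the statement is preferable: by Theorem~\ref{T:PresACA}, an $F_\sigma$-Mathias extension of an \ACA\ model again satisfies \ACA, so in the extension every arithmetical $\theta(\bar v)$ has a Skolem function; that function is arithmetically definable from the generic real, hence is named by a ground-model name, and transporting the conclusion back through the forcing theorem yields the proposition, with only a final fusion needed to produce the uniform extension $(a_0,A,\mu)$.
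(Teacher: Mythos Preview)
Your proposal is correct and follows the same approach the paper sketches: induction on complexity with the existential step handled as in Proposition~\ref{P:Pi3Witnessing}, replacing appeals to Proposition~\ref{P:Pi1Forcing} by Proposition~\ref{P:Pi1Skolem} applied to the Skolemized inner formula. You also correctly identify the alternative route via Theorem~\ref{T:PresACA} that the paper mentions, and your remark about locality bookkeeping for $W^\phi_S$ is the right place to focus care in a full write-up.
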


\noindent
The proof of Proposition~\ref{P:AllWitnessing} is by induction on the complexity of \(\theta(\bar{v}).\) The existential quantifier steps are handled as we did for \(\Sigma^0_2\) formulas, except that uses of Proposition~\ref{P:Pi1Forcing} should be replaced by Proposition~\ref{P:Pi1Skolem}.

\section{The Generic Extension}\label{S:Extension}

The reader is invited to temporarily step out of the ground model into the ambient meta-world. Although not strictly necessary, it helps to think that the ground model is countable so that all generic objects discussed below can be proved to exist. To avoid unnecessary fuss, the reader can hold on to the belief that the ground model is an \(\omega\)-model. Our language will be tailored to this point of view, but all that is said will continue to hold true (perhaps vacuously) even in the worst case an uncountable non-standard ground model. So let us fix a generic filter \(\mathcal{G}\) for \(F_\sigma\)-Mathias forcing over our ground model \(\MN.\)

We first verify that \(F_\sigma\)-Mathias forcing is indeed a \emph{real forcing}, i.e.\ the generic filter \(\mathcal{G}\) is completely determined by the single real \[G = \bigcup \set{ a \in \fin{\N} : (a,A,\mu) \in \mathcal{G}}.\] This is the \emph{generic \(F_\sigma\)-Mathias real} associated to \(\mathcal{G}.\)

\begin{proposition}[\WKL]\label{P:GenericReal}
  \((a,A,\mu) \in \mathcal{G}\) if and only if \(a \subseteq G \subseteq A\) and \(\mu(G) = \infty.\)
\end{proposition}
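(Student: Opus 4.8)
The plan is to prove both directions, with the forward direction being essentially immediate from genericity and the backward direction requiring a density argument.

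First I would handle the easy direction: suppose $(a,A,\mu) \in \mathcal{G}$. Then $a \subseteq G$ is immediate from the definition of $G$ as the union of the finite parts of conditions in $\mathcal{G}$. For $G \subseteq A$, recall that $\mathcal{G}$ is a filter, so any other $(b,B,\nu) \in \mathcal{G}$ has a common extension $(c,C,\kappa) \in \mathcal{G}$ with $(c,C,\kappa) \le (a,A,\mu)$ and $(c,C,\kappa) \le (b,B,\nu)$; then $b \subseteq c \subseteq A$, so every finite part of a condition in $\mathcal{G}$ is a subset of $A$, whence $G \subseteq A$. For $\mu(G) = \infty$, fix any $n$; the set of conditions $(b,B,\nu) \le (a,A,\mu)$ with $\nu(b) \ge n$ is dense below $(a,A,\mu)$ (extend the finite part inside $B$, using $\nu(B) = \infty$), so by genericity some such condition lies in $\mathcal{G}$, giving $b \subseteq G$ with $\nu(b) \ge n$; since $\nu \ge \mu$ this means $\mu(b) \ge n$ wait — actually $\mu \le \nu$ gives $\mu(b) \le \nu(b)$, so instead I should note $\mu(G \cap \{0,\dots,m-1\}) \ge n$ for suitable $m$ directly: the relevant dense set is conditions with $\mu$(finite part)$\ge n$, which is dense below $(a,A,\mu)$ because $\mu(A)=\infty$ lets us grow $a$ inside $A$ to increase $\mu$. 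This handles $\mu(G) = \infty$.

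For the converse, suppose $a \subseteq G \subseteq A$ and $\mu(G) = \infty$; I must show $(a,A,\mu) \in \mathcal{G}$. Since $\mathcal{G}$ is a filter, it suffices to show $(a,A,\mu)$ is compatible with every $(b,B,\nu) \in \mathcal{G}$, i.e.\ that $(a,A,\mu) \in \mathcal{G}$ follows once we know every condition in $\mathcal{G}$ is compatible with it — more precisely, I would use that $\mathcal{G}$ is generic and show $(a,A,\mu)$ belongs to every dense set would be wrong; rather, the clean route is: the set $D = \{(b,B,\nu) : (b,B,\nu) \le (a,A,\mu) \text{ or } (b,B,\nu) \perp (a,A,\mu)\}$ is dense, so $\mathcal{G}$ meets it; if $\mathcal{G}$ contained a condition incompatible with $(a,A,\mu)$ we derive a contradiction from $a \subseteq G \subseteq A$ and $\mu(G) = \infty$; hence $\mathcal{G}$ contains some $(b,B,\nu) \le (a,A,\mu)$. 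Then pick $n$ large enough that $b \subseteq \{0,\dots,n-1\}$ and use $a \subseteq G$, $G \subseteq A$ together with $b \subseteq G$ (which holds since $(b,B,\nu)\in\mathcal G$) to check $a \subseteq b$: indeed $a \subseteq G$ and $a$ is finite, and any element of $a$ not in $b$ would be decided — here I need that for a sufficiently strong condition in $\mathcal G$ below $(b,B,\nu)$ the finite part determines membership below $n$, and compatibility with $(a,A,\mu)$ forces $a \subseteq b$. Granting $a \subseteq b \subseteq A$ and $B \subseteq A$ and $\mu \le \nu$, we get $(b,B,\nu) \le (a,A,\mu)$, and since $\mathcal{G}$ is upward closed (it is a filter), $(a,A,\mu) \in \mathcal{G}$.

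The main obstacle I anticipate is the case analysis inside the converse: showing that no condition in $\mathcal{G}$ can be incompatible with $(a,A,\mu)$ given only the three facts $a \subseteq G$, $G \subseteq A$, $\mu(G)=\infty$. Incompatibility of $(b,B,\nu)$ with $(a,A,\mu)$ must be traced to one of: $a \not\subseteq B$ (contradicts $a \subseteq G \subseteq B$, using $G \subseteq B$ which needs $(b,B,\nu)\in\mathcal G$ and the forward direction), or $b \not\subseteq A$ (contradicts $b \subseteq G \subseteq A$), or $\mu \wedge \nu$ not extendable to a submeasure with the relevant set still infinite — this last is the delicate point, and it is exactly where $\mu(G) = \infty$ must be combined with $\nu(G) = \infty$ and Proposition~\ref{P:Meet}: if $(\mu \vee \nu)(G)$ — no, one needs $\mu \vee \nu$ as the common lower bound in the ordering on submeasures, and must check $(\mu\vee\nu)(G) = \infty$, which is immediate since $\mu(G) = \infty$ already. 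So the genuinely careful bookkeeping is matching up the three coordinates of the ordering in Definition~\ref{D:FsigmaMathiasFocing} against the three conjuncts defining membership, using \WKL\ where a finite-union decomposition is needed to witness that some piece is $\mu$-infinite. I expect \WKL\ enters precisely in this compatibility-to-extension step, matching the hypothesis of the proposition.
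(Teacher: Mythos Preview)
Your overall plan is sound: establish the forward direction by standard filter/genericity arguments, then for the converse reduce to showing that $(a,A,\mu)$ is compatible with every $(b,B,\nu)\in\mathcal{G}$. The forward direction is fine (the paper leaves it essentially implicit), and your use of the dense class $D=\{q:q\le(a,A,\mu)\text{ or }q\perp(a,A,\mu)\}$ is the right move. But the passage beginning ``Then pick $n$ large enough\dots'' is a confused detour: once you know $\mathcal{G}$ contains some $(b,B,\nu)\le(a,A,\mu)$, upward closure finishes immediately, and there is nothing further to check about $a\subseteq b$.

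The real work, as you say, is ruling out $(b,B,\nu)\perp(a,A,\mu)$ for $(b,B,\nu)\in\mathcal G$, and in your final paragraph you are one line short of an argument that is actually \emph{simpler} than the paper's. You correctly identify $\mu\vee\nu$ as the least submeasure above both, and note $(\mu\vee\nu)(G)=\infty$ from $\mu(G)=\infty$. The missing step is plain monotonicity: since $a\cup b\subseteq G\subseteq A\cap B$ (using the forward direction applied to $(b,B,\nu)\in\mathcal G$), we get $(\mu\vee\nu)(A\cap B)\ge(\mu\vee\nu)(G)=\infty$, and then $(a\cup b,\,A\cap B,\,\mu\vee\nu)$ is already a common extension. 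No finite-union decomposition is needed, and your expectation that \WKL\ must enter at this point is misplaced.

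The paper proceeds differently and more elaborately. It works with $\mu\wedge\nu$ rather than $\mu\vee\nu$: assuming incompatibility it deduces $(\mu\wedge\nu)(A\cap B)<\infty$, invokes Proposition~\ref{P:Meet} (this is where \WKL\ is actually used) to split $A\cap B=A'\cup B'$ with $\mu(A')<\infty$ and $\nu(B')<\infty$, and then runs an auxiliary density argument to locate $(d,D,\lambda)\in\mathcal G$ with $\lambda(D\cap A')<\infty$, eventually forcing $\lambda(G)<\infty$ for a contradiction. Your $\mu\vee\nu$ route, once completed as above, bypasses both the decomposition and this second density argument.
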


\begin{proof}
  First, observe that if \(\nu\) is a submeasure coded in \(\MN,\) then \(\nu(G) < \infty\) if and only if there is a condition \((a,A,\mu) \in \mathcal{G}\) such that \(\nu(A) < \infty.\) The backward direction is clear. For the forward direction, suppose that \(\nu(G) \leq n\) and consider the class \[\mathcal{D} = \set{(a,A,\mu) : \nu(a) > n}.\] Since \(\mathcal{D} \cap \mathcal{G} = \varnothing,\) there must be a condition \((a,A,\mu) \in \mathcal{G}\) which has no extension in \(\mathcal{D}.\) This last statement is equivalent to \(\nu(A) \leq n.\)

  Suppose that \((a,A,\mu)\) is a condition such that \(a \subseteq G \subseteq A\) and \(\mu(G) = \infty.\) Since \(\mathcal{G}\) is a maximal filter, it suffices to show that \((a,A,\mu)\) is compatible with every condition \((b,B,\nu) \in \mathcal{G}.\) Since \(a \cup b \subseteq G \subseteq A \cap B,\) the only way in which \((a,A,\mu)\) and \((b,B,\nu)\) could be incompatible is that \((\mu\wedge\nu)(A \cap B) < \infty.\) So suppose \((b,B,\nu) \in \mathcal{G}\) is such that \((\mu\wedge\nu)(A\cap B) < \infty,\) then we can find disjoint \(A',B'\) in the ground model \(\N\) such that \(A \cap B = A' \cup B'\) and \(\mu(A') < \infty,\) \(\nu(B') < \infty.\) Consider the class \[\mathcal{C} = \set{ (c,C,\kappa) : C - c \subseteq A' }.\] Since \(\mathcal{C}\cap\mathcal{G} = \varnothing\) there must be a \((d,D,\lambda) \in \mathcal{G}\) with no extension in \(\mathcal{C},\) i.e., \(\lambda(D\cap A') < \infty.\) Without loss of generality, \((d,D,\lambda) \leq (b,B,\nu)\) which means that \(\lambda(D \cap B') < \infty\) too. But then \(G \subseteq A\cap B \cap D = (A' \cup B')\cap D\) and so \[\lambda(G) \leq \lambda(A' \cap D)+\lambda(B' \cap D) < \infty,\] which is impossible. Therefore \((a,A,\mu) \in \mathcal{G},\) as required.
\end{proof}

\noindent
In view of the above, we will now forget about the generic filter \(\mathcal{G}\) and work only with the generic real \(G.\) Instead of writing \((a,A,\mu) \in \mathcal{G},\) we will simply say that the condition \((a,A,\mu)\) is compatible with \(G.\)

Next, we verify that the generic extension \(\MN[G]\) is well defined. A name \(F\) is \emph{\(G\)-local} if and only if it is \((a,A,\mu)\)-local for some condition \((a,A,\mu)\) compatible with \(G.\) A formula \(\phi\) of the forcing language is \emph{\(G\)-local} if and only if every name that occurs in \(\phi\) is \(G\)-local.

\begin{proposition}[\WKL]\label{P:GenericFun}
  If the name \(F\) is \(G\)-local, then \[F^G(\bar{x}) = y \IFF \exists n\,{(G \res n,\bar{x},y) \in F}\] defines a total \(k\)-ary function.
\end{proposition}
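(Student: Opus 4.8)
This proposition has little content beyond unwinding definitions, and the plan is to reduce it to the single claim that the generic real $G$ belongs to $\dom(F)$. Indeed, Definition~\ref{D:Names} already records that for any $X \in \dom(F)$ the rule $F^X(\bar v) = y \iff \exists n\,(X\res n,\bar v,y)\in F$ defines a total $k$-ary function: single-valuedness is forced by the monotonicity and functionality clauses in the definition of a name (if $(G\res n,\bar x,y)\in F$ and $(G\res m,\bar x,y')\in F$ with $n\le m$, then $G\res n\subseteq G\res m$ gives $(G\res m,\bar x,y)\in F$, hence $y=y'$), and totality at an index $\bar x$ is literally the $\bar x$-component of membership in $\dom(F)$. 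So it suffices to show $G\in\dom(F)$. By hypothesis there is a condition $(a,A,\mu)$ compatible with $G$ such that $F$ is $(a,A,\mu)$-local.

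The heart of the matter is a density lemma. Fix $\bar x\in\N$ and let $\mathcal D_{\bar x}$ be the class of all conditions $(c,C,\kappa)$ for which there exist $\tau\in 2^{<\infty}$ and $y\in\N$ with $\tau^{-1}(1)\subseteq c$, $\tau^{-1}(0)\cap C=\varnothing$, and $(\tau,\bar x,y)\in F$. I would show $\mathcal D_{\bar x}$ is dense below $(a,A,\mu)$: given $(b,B,\nu)\le(a,A,\mu)$, locality of $F$ yields some $X\in\nbhd{b,B,\nu}\cap\dom(F)$, and unwinding $X\in\dom(F)$ at the index $\bar x$ produces $\tau\subseteq\chi_X$ and $y$ with $(\tau,\bar x,y)\in F$. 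Since $b\subseteq X\subseteq B$ and $\tau=\chi_X\res|\tau|$, we get $\tau^{-1}(1)\subseteq B$ and $b\cap\tau^{-1}(0)=\varnothing$; hence the finite modification $c=b\cup\tau^{-1}(1)$, $C=B-\tau^{-1}(0)$, $\kappa=\nu$ is a condition with $(c,C,\kappa)\le(b,B,\nu)$, and by construction $(c,C,\kappa)\in\mathcal D_{\bar x}$.

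Granting the lemma, genericity of $\mathcal G$ furnishes, for each $\bar x$, a condition $(c,C,\kappa)\in\mathcal D_{\bar x}$ compatible with $G$; by Proposition~\ref{P:GenericReal} (or directly from the filter axioms) this gives $c\subseteq G\subseteq C$. For the associated $\tau$ we then have $\tau^{-1}(1)\subseteq c\subseteq G$ and $\tau^{-1}(0)\cap G\subseteq\tau^{-1}(0)\cap C=\varnothing$, so $\tau=G\res|\tau|$ and therefore $(G\res|\tau|,\bar x,y)\in F$. As $\bar x$ was arbitrary, $G\in\dom(F)$, which is what we needed.

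The only step that is not pure bookkeeping is the density lemma, and within it the one point to check with a little care is that the finite modification $(c,C,\kappa)$ really is a condition extending $(b,B,\nu)$ — in particular that $\kappa(C)=\nu(B-\tau^{-1}(0))=\infty$. This follows from subadditivity of $\nu$ together with the fact that $\tau^{-1}(0)$ is finite, so $\nu(\tau^{-1}(0))$ is an honest real and cannot absorb the infinite value $\nu(B)$; everything else is formal.
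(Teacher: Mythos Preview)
Your proof is correct and follows essentially the same approach as the paper: a density argument showing that, by locality of $F$, for each $\bar x$ the class of conditions pinning down a value for $F(\bar x)$ is dense below a condition compatible with $G$, whence genericity gives $G\in\dom(F)$. Your dense class $\mathcal D_{\bar x}$ is phrased slightly differently from the paper's $\mathcal F_{\bar x}$ and you are more explicit about verifying that the finite modification $(c,C,\kappa)$ is a genuine extension, but the idea is identical.
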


\begin{proof}
  Suppose \(F\) is a \(G\)-local name. Fix \(\bar{x} \in \N\) and consider the class \[\mathcal{F}_{\bar{x}} = \set{(a,A,\mu):\exists y,\tau\,(\tau^{-1}(1) = a \land (\tau,\bar{x},y) \in F)}.\] If no element of \(\mathcal{F}_{\bar{x}}\) is compatible with \(G,\) then there must be a condition \((b,B,\nu)\) compatible with \(G\) which has no extension in \(\mathcal{F}_{\bar{x}}.\) In that case, \(F\) is not \(G\)-local since \(F\) is not \((c,C,\kappa)\)-local for any \((c,C,\kappa) \leq (b,B,\nu).\)
\end{proof}

\noindent
The generic extension \(\MN[G]\) is the functional model with the same base set \(\N\) and where the \(k\)-ary functions are \[\FN_k[G] = \set{F^G : \mbox{\(F\) is a \(G\)-local \(k\)-ary name}}.\] The observation made in Section~\ref{S:Forcing} that composition and primitive recursion preserve locality ensures that \(\MN[G]\) satisfies all of our basic axioms. The next two theorems show that uniformization and minimization are preserved in the generic extension.

\begin{theorem}[\ACA]\label{T:PresACA}
  If \(G\) is generic for \(F_\sigma\)-Mathias forcing, then \(\MN[G] \models \ACA.\)
\end{theorem}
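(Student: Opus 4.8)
The plan is to show that the generic extension $\MN[G]$ satisfies the minimization axiom, given that the ground model $\MN$ satisfies \ACA. By the discussion in Section~\ref{S:Extension}, it suffices to take an arbitrary $(k{+}1)$-ary function in $\MN[G]$, represent it by a $G$-local name, and produce a $G$-local name for a function witnessing minimization. So fix a $G$-local $(k{+}1)$-ary name $F$; we want a $G$-local $k$-ary name $H$ such that $\MN[G] \models \forall x\,\forall\bar w\,(F^G(x,\bar w) \geq F^G(H^G(\bar w),\bar w))$.

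The key observation is that this minimization statement, once unwound, is essentially a $\Sigma^0_2$-uniformization problem over the forcing language: saying that for each $\bar w$ there exists $m$ which is the minimum value of $F(\cdot,\bar w)$ is the assertion $\forall\bar w\,\exists m\,\bigl(\exists z\,(F(z,\bar w) = m) \land \forall z\,(F(z,\bar w) \geq m)\bigr)$, and the matrix here is $\Sigma^0_2$ (a conjunction of a $\Sigma^0_1$ and a $\Pi^0_1$ formula of the forcing language). First I would verify that a condition $(a,A,\mu)$ compatible with $G$ forces this $\forall\exists$ statement — this follows because $F^G$ is a genuine total function in $\MN[G]$ (Proposition~\ref{P:GenericFun}) and $\MN[G]$ has at least the minimum-value-of-a-function principle available simply because $F^G$ has a range with a least element (no real content beyond linearity of $\leq$). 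Then I would apply Corollary~\ref{C:Sigma2Uniformization}: there is an extension $(b,B,\nu) \leq (a,A,\mu)$ and a $(b,B,\nu)$-local name $M(\bar w)$ such that $(b,B,\nu) \forces \forall\bar w\,\bigl(\exists z\,(F(z,\bar w) = M(\bar w)) \land \forall z\,(F(z,\bar w) \geq M(\bar w))\bigr)$. By genericity, we may assume $(b,B,\nu)$ is compatible with $G$, so $M$ is $G$-local. Finally, from $M$ I would extract the witness $H$: since $(b,B,\nu)$ forces $\forall\bar w\,\exists z\,(F(z,\bar w) = M(\bar w))$, a further application of \(\Sigma^0_1\)-uniformization (Corollary~\ref{C:Sigma1Uniformization}) — or equivalently just noting $F^G$ is total and searching — produces a $G$-local name $H(\bar w)$ with $(b',B',\nu') \forces \forall\bar w\,(F(H(\bar w),\bar w) = M(\bar w))$ for some compatible extension; combined with the minimality clause this gives $(b',B',\nu') \forces \forall x\,\forall\bar w\,(F(x,\bar w) \geq F(H(\bar w),\bar w))$.

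The translation back to the model is then routine: since $(b',B',\nu')$ is compatible with $G$, the forced $\Pi^0_2$ sentence holds in $\MN[G]$, i.e.\ $H^G$ witnesses minimization for $F^G$. The main obstacle I anticipate is the bookkeeping at the very first step — confirming that $(a,A,\mu) \forces \forall\bar w\,\exists m\,(\ldots)$, where one must be slightly careful that the existential witness for the minimum is genuinely forced (not merely forced to exist densely in a way that only \ACA\ in the extension could recover), and that the whole matrix is honestly within the $\Sigma^0_2$ scope so that Corollary~\ref{C:Sigma2Uniformization} applies. Everything downstream is an application of the witnessing/uniformization machinery already built in Section~\ref{S:Witnessing}.
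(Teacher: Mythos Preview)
Your proposal is essentially the paper's approach: reduce minimization to a $\Pi^0_3$ fact in $\MN[G]$, pull it back to a forcing statement, and invoke $\Sigma^0_2$ uniformization (Corollary~\ref{C:Sigma2Uniformization}) to produce the witnessing name. Two points deserve comment.

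First, the ``bookkeeping'' you flag is not mere bookkeeping: it is precisely the content of Proposition~\ref{P:Pi3Truth}. You argue that the $\Pi^0_3$ sentence holds in $\MN[G]$ and then want a condition compatible with $G$ that \emph{forces} it. That passage from truth to forcing for $\Pi^0_3$ sentences is nontrivial here --- it rests on Theorem~\ref{T:Pi3Witnessing} and the density argument in the proof of Proposition~\ref{P:Pi3Truth} --- and you should cite that proposition rather than leave it as a verification. Relatedly, your line ``by genericity, we may assume $(b,B,\nu)$ is compatible with $G$'' is not automatic: Corollary~\ref{C:Sigma2Uniformization} hands you \emph{some} extension, with no promise of compatibility with $G$. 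The paper sidesteps this by using Proposition~\ref{P:Pi3Truth} to obtain a condition compatible with $G$ that already \emph{Skolem}-forces the $\Pi^0_3$ sentence; the Skolem witness then is the uniformizing name, with no further extension needed.

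Second, a minor streamlining: the paper uniformizes the $\Sigma^0_2$ formula $\exists y\,\forall z\,(H(\bar x,y)\le H(\bar x,z))$ directly, obtaining the argmin in one step. You instead first uniformize to the minimum \emph{value} $M(\bar w)$ and then apply $\Sigma^0_1$ uniformization again to extract a point achieving it. Both work, but the one-step version avoids the second pass and the attendant second ``compatible extension'' issue.
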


\noindent
We do not know whether \WKL\ + \Bnd{\Sigma^0_2} is preserved by \(F_\sigma\)-Mathias forcing, but \WKL\ + \Ind{\Sigma^0_2} is preserved.

\begin{theorem}[\WKL + \Ind{\Sigma^0_2}]\label{T:PresIS2}
  If \(G\) is generic for \(F_\sigma\)-Mathias forcing, then \(\MN[G] \models \WKL + \Ind{\Sigma^0_2}.\)
\end{theorem}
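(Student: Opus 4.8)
The goal is: assuming $\MN \models \WKL + \Ind{\Sigma^0_2}$, show $\MN[G] \models \WKL + \Ind{\Sigma^0_2}$. The plan follows the shape of Theorem~\ref{T:PresACA}, but the $\ACA$-strength tools of Section~\ref{S:Witnessing:ACA} (Lemma~\ref{L:StarFusion}, Theorem~\ref{T:Pi3Witnessing}, Corollary~\ref{C:Sigma2Uniformization}) are unavailable and must be replaced by an argument through countable coded $\omega$-models. Since $\MN \models \WKL$, every object of $\MN$ lies in a countable coded $\omega$-model $M \models \WKL$ of $\MN$; such an $M$ shares the first-order part of $\MN$, so it models $\WKL + \Bnd{\Sigma^0_2}$, all the results of Sections~\ref{S:Submeasures}--\ref{S:Forcing}, and the $\RCA$-level witnessing of Section~\ref{S:Witnessing:RCA}, and --- crucially --- it is correct for arithmetical statements about its members, so submeasures and conditions built in $M$ (by Proposition~\ref{P:Submeasure}, Proposition~\ref{P:Pi2Decision}, Lemma~\ref{L:StarSubmeasure}) and forcing computations performed in $M$ agree with those of $\MN$. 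Running the fusion of Lemma~\ref{L:StarFusion} in the metatheory --- where the $\Pi^0_2$ case analyses at each stage cause no trouble --- with $M$ playing the role of the strict $\beta$-submodel, and verifying that the resulting extension and Skolem names can be taken to belong to $\MN$, yields the $\omega$-model substitute for Lemma~\ref{L:StarFusion}: for every condition and every $\Pi^0_1$ forcing-language formula $\phi(\bar v,w)$ there is an extension below which approximate forcing of $\exists w\,\phi(\bar x,w)$ is decided uniformly, and therefore weak witnessing for $\Pi^0_3$ sentences and uniformization for $\Sigma^0_2$ formulas of the forcing language, uniformly in the condition.

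For $\MN[G] \models \WKL$, let $T$ be a $G$-local name for an infinite binary tree and $(a_0,A_0,\mu_0)$ a condition compatible with $G$ forcing the $\Pi^0_2$ sentence ``$T^G$ is an infinite binary tree''. By $\Pi^0_2$-witnessing (Theorem~\ref{T:Pi2Witnessing}) pass to a Skolem name $H$ with $(a_0,A_0,\mu_0)\forces\forall\ell\,(|H(\ell)|=\ell \land H(\ell)\in T^G)$; since $T^G$ is forced to be a tree, every initial segment of every value of $H$ is forced into $T^G$. Using the device of the first step one extends $(a_0,A_0,\mu_0)$ to a condition $(a_0,A,\mu)$ for which the ground-model tree of strings $\rho$ such that $(a_0,A,\mu)$ forces $\rho \in T^G$ is infinite; ground-model $\WKL$ furnishes a path $p$ through it, and its canonical name $\check p$ satisfies $(a_0,A,\mu)\forces\forall n\,\check p\res n \in T^G$. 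As the conditions admitting such a name are dense below $(a_0,A_0,\mu_0)$, the generic meets them and $p$ is a path through $T^G$ in $\MN[G]$.

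Finally, $\MN[G] \models \Ind{\Sigma^0_2}$, which is the crux. Suppose it failed: there would be a $\Sigma^0_2$ forcing-language formula $\theta(n)\equiv\exists y\,\phi(n,y)$ with $\phi$ a $\Pi^0_1$ formula (and $G$-local name parameters) and a condition compatible with $G$ forcing ``$\theta(0) \land \forall n\,(\theta(n)\lthen\theta(n+1)) \land \lnot\theta(k)$'' for some fixed $k$. The straightforward approach --- Skolemize the $\Pi^0_3$ sentence $\forall n\,\theta(n)$ into $\forall n\,\phi(n,Y(n))$ --- demands $\Pi^0_3$-witnessing and is out of reach. Instead I would reflect the names and the condition into a countable coded $\omega$-model $M \models \WKL$, and inside the metatheory, iterating the $\Sigma^0_2$-uniformization of the first step, build an extension together with a partial name that records a witness $y$ for $\theta(n)$ for every $n$ (using that below the extension ``$\theta(n)\lthen\theta(n+1)$'' is forced, so a witness at $n$ yields one at $n+1$); tracing this construction back to a single $\Sigma^0_2$ statement over $\MN$ with parameter the code of $M$, the ground model's own $\Ind{\Sigma^0_2}$ then shows the construction runs past $k$, contradicting the forced $\lnot\theta(k)$. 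The main obstacle --- and where essentially all the work sits --- is the interface between $M$ and $\MN$: locality of the names produced inside $M$ and the decidedness of $\starforces$ are genuinely $\Pi^1_2$ and are not transported by arithmetic correctness, so they must be re-verified over $\MN$ itself before the concluding appeal to ground-model $\Ind{\Sigma^0_2}$ is legitimate; getting this bookkeeping right, rather than any combinatorial difficulty, is the heart of the matter.
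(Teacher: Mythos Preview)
Your proposal misses the paper's much simpler route and, as you yourself flag, leaves the hard step unresolved.

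For \WKL, the paper's argument (Proposition~\ref{P:PresWKL}) is a one-paragraph compactness trick directly on names: given a $G$-local name $S$ for an infinite binary tree, for each $n$ let $\mathcal{B}_n$ be the closed class of partial names $B$ which, wherever $S$ has already committed to a level set, pick a branch through it; each $\mathcal{B}_n$ is nonempty, so by ground-model \WKL\ the intersection $\bigcap_n \mathcal{B}_n$ contains some $B$, and this $B$ is a $G$-local name for a path. No extension of the condition, no Skolemization, no appeal to the $\omega$-model machinery you set up.

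For \Ind{\Sigma^0_2}, the paper avoids fusion, $\omega$-models, and $\Pi^0_3$-witnessing entirely. The key observation you are missing is that Lemma~\ref{L:StarSubmeasure} (which needs only \WKL\ + \Bnd{\Sigma^0_2}) already encodes ``some extension with finite part $a$ forces $\phi(x,y)$ for some $y$'' as the $\Sigma^0_2$ statement ``$\lambda_{a,x}(A) < \infty$'' about a uniformly computed submeasure. Setting $\nu_x = \bigwedge_{a_0 \subseteq a \subseteq A_0} (\lambda_{a,x} \vee |a|)$, the predicate $(\mu_0 \wedge \nu_0 \wedge \cdots \wedge \nu_x)(A_0) < \infty$ is a single $\Sigma^0_2$ formula in $x$, and one applies ground-model \Ind{\Sigma^0_2} (in its least-number-principle form) once to get the minimal such $x$. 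The condition $(a_0,A_0,\mu_0 \wedge \bigwedge_{z<x} \nu_z)$ then forces $\forall u<x\,\forall v\,\lnot\phi(u,v)$, and a further finite refinement forces $\phi(x,y)$; so $x$ is forced least. Density follows.

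By contrast, your plan for \Ind{\Sigma^0_2} tries to reconstruct $\Pi^0_3$-witnessing without \ACA\ by running the fusion of Lemma~\ref{L:StarFusion} over a countable coded $\omega$-model, but that lemma's fusion construction makes a $\Pi^0_2$ case distinction at each stage, so the full sequence is not obviously available without arithmetic comprehension; and as you correctly note, locality is $\Pi^1_2$ and does not reflect from $M$ to $\MN$. You acknowledge this obstacle but do not overcome it, so the proposal is not a proof. The paper sidesteps the issue entirely by never needing to witness a $\Pi^0_3$ sentence: the least-number-principle formulation reduces everything to a single $\Sigma^0_2$ minimization in the ground model.
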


To prove these preservation theorems, we will establish a series of results that relate truth in the generic extension \(\MN[G]\) and the forcing relation in the ground model \(\MN.\) As usual, the keystone is the \(\Pi^0_1\) case.

\begin{proposition}[\WKL]\label{P:Pi1Truth}
  If \(\phi\) is a \(\Pi^0_1\) sentence of the forcing language and there is a condition \((a,A,\mu)\) compatible with \(G\) such that \(\phi\) is \((a,A,\mu)\)-local and \((a,A,\mu) \forces \phi,\) then \(\MN[G] \models \phi^G.\)
\end{proposition}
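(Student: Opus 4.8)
The plan is to put \(\phi\) into the normal form \(\forall w\,(T(w) = 0)\) for a partial name \(T,\) reduce the claim to the single computation of \(T^G,\) and then exploit the fact — which is precisely what \((a,A,\mu)\forces\phi\) governs — that every initial segment \(\chi_G\res n\) of the generic real lies in the tree \(\tree{a,A}.\)

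First I would fix the normal form: write \(\phi \equiv \forall w\,\phi_0(w)\) with \(\phi_0\) bounded and let \(T(w)\) be the partial name given by Proposition~\ref{P:BoundedForcing} for \(\phi_0,\) so that \((a,A,\mu)\forces\forall w\,(\phi_0(w)\liff T(w)=0)\) and, since \(\phi\) is \((a,A,\mu)\)-local, so is \(T.\) Because \((a,A,\mu)\) is compatible with \(G,\) the name \(T\) is \(G\)-local, hence \(T^G\) is a total function by Proposition~\ref{P:GenericFun}, and \(\MN[G]\models\phi^G\) precisely when \(\MN[G]\models\forall w\,(T^G(w)=0).\) The passage from \(\phi^G\) to \(\forall w\,(T^G(w)=0)\) is the only step that genuinely uses genericity: it is the routine truth lemma for bounded sentences, whose atomic case is the computation of the next paragraph, whose conjunction and bounded-quantifier cases are immediate, and whose negation case uses density of the class of conditions deciding a given bounded sentence together with the maximality of the generic filter. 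Granting this, it suffices to show \(T^G(w)=0\) for every \(w\in\N.\)

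Now the crux. Fix \(w\) and set \(z = T^G(w).\) By Proposition~\ref{P:GenericFun} there is an \(n\) with \((\chi_G\res n, w, z)\in T;\) write \(\tau = \chi_G\res n.\) Since \((a,A,\mu)\) is compatible with \(G,\) Proposition~\ref{P:GenericReal} gives \(a\subseteq G\subseteq A,\) hence \(a\cap\dom(\tau)\subseteq G\cap\dom(\tau) = \tau^{-1}(1)\subseteq A,\) i.e.\ \(\tau\in\tree{a,A}.\) By Proposition~\ref{P:Pi1Forcing}, read off from its proof, the hypothesis \((a,A,\mu)\forces\phi\) says exactly that whenever \(\sigma\in\tree{a,A}\) and \((\sigma,w',z')\in T\) one has \(z'=0;\) applying this with \(\sigma=\tau,\) \(w'=w,\) \(z'=z\) gives \(z=0,\) as desired. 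The same argument with the name \(|F-F'|\) in place of \(T\) disposes of the atomic case \(F^G=F'^G\) used above. I expect the only real friction to be the negation clause of the bounded truth lemma — the standard ``some condition compatible with \(G\) decides \(\psi\)'' density argument — while the genuinely new point, that \(\chi_G\res n\in\tree{a,A},\) is an immediate consequence of \(a\subseteq G\subseteq A.\)
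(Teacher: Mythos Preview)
Your proof is correct and follows the paper's approach: normalize $\phi$ to $\forall w\,(T(w)=0)$ via Proposition~\ref{P:BoundedForcing}, use totality of $T^G$ (Proposition~\ref{P:GenericFun}), and deduce $T^G(w)=0$ from $\chi_G\res n\in\tree{a,A}$ together with the $\Pi^0_1$ characterization of forcing in Proposition~\ref{P:Pi1Forcing}. One small correction: the passage from $\MN[G]\models\phi^G$ to $\MN[G]\models\forall w\,(T^G(w)=0)$ does \emph{not} require genericity or a bounded truth lemma --- the construction of $T_{\phi_0}$ in Proposition~\ref{P:BoundedForcing} (via $|F-F'|$, $1\sub T_\psi$, $T_\psi+T_\theta$, bounded sums) is purely compositional, so $\phi_0^G(w)\iff T^G(w)=0$ holds in $\MN[G]$ simply by evaluating the composed names, and your detour through density and maximality of the filter for the negation clause is unnecessary (though harmless).
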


\begin{proof}
  By Proposition~\ref{P:BoundedForcing}, we may assume that \(\phi\) is of the form \(\forall\bar{v}\,{T(\bar{v}) = 0},\) where \(T\) is \((a,A,\mu)\)-local. Since \((a,A,\mu) \forces \phi,\) we know that \((G\res n,\bar{x},y) \in T\) implies \(y = 0.\) Since \(T^G\) is total, we conclude that \(T^G(\bar{x}) = 0\) for every \(\bar{x} \in \N.\) By definition of \(\MN[G],\) we see that \(\MN[G] \models \forall\bar{v}\,{T(\bar{v}) = 0}.\)
\end{proof}

\noindent
Since the Skolemization \(\phi_S(W)\) is always equivalent to a \(\Pi^0_1\) formula, we have a useful corollary to this last proposition.

\begin{corollary}[\WKL]\label{C:SkolemTruth}
  If \(\phi\) is a \(G\)-local sentence and there is a condition \((a,A,\mu)\) compatible with \(G\) such that \((a,A,\mu) \Sforces \phi,\) then \(\MN[G] \models \phi^G.\)
\end{corollary}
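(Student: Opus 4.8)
The plan is to push the hypothesis down to the $\Pi^0_1$ case, where Proposition~\ref{P:Pi1Truth} applies, exploiting that a Skolemization is, up to the standard bounding trick, a $\Pi^0_1$ sentence of the forcing language and that it implies the original sentence outright. First I would unwind the hypothesis: by definition of ${\Sforces}$ there is an $(a,A,\mu)$-local unary name $W$ with $(a,A,\mu) \forces \phi_S(W)$. Since $(a,A,\mu)$ is compatible with $G$ and $W$ is $(a,A,\mu)$-local, $W$ is $G$-local, so by Proposition~\ref{P:GenericFun} the evaluation $W^G$ is a genuine total function of $\MN[G]$, and all names of $\phi$ are $G$-local as well. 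Chasing through Definition~\ref{D:Skolem} exactly as in the proof of Proposition~\ref{P:Pi1Skolem}, the only quantifiers in $\phi_S(W)$ are universal and positive; bounding each of them by a single fresh variable $u$ produces a bounded formula $\psi(W;u)$ with $(a,A,\mu) \forces \phi_S(W) \IFF (a,A,\mu) \forces \forall u\,\psi(W;u)$, and the sentence $\forall u\,\psi(W;u)$ is $\Pi^0_1$ and $(a,A,\mu)$-local (it introduces no names beyond those of $\phi$ and $W$).

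Next I would apply Proposition~\ref{P:Pi1Truth} to this $\Pi^0_1$ sentence: it is $(a,A,\mu)$-local, $(a,A,\mu)$ is compatible with $G$, and $(a,A,\mu) \forces \forall u\,\psi(W;u)$, so $\MN[G] \models (\forall u\,\psi(W;u))^G$. In $\MN[G]$ one has $\forall u\,\psi(W^G;u) \IFF \phi_S(W^G)$ --- the same bounding trick, now as an honest arithmetic equivalence, valid since the order on $\N$ is total so finite maxima exist --- whence $\MN[G] \models \phi_S(W^G)$.

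Finally I would invoke the schematic logical fact that for every formula $\theta$ of the forcing language and every name $W$, $\theta_S(W) \lthen \theta$ (dually $\theta \lthen \theta_H(W)$) is a first-order validity, proved by a routine simultaneous induction on the syntax of $\theta$: the atomic and conjunction cases are immediate, the negation clause forces the two implications to be proved together since it swaps $S$ and $H$, and for $\theta \equiv \forall w\,\phi(w)$ one has $\theta_S(W) \equiv \forall w\,\phi_S(\lambda t\,W(\pair{w}{t});w)$, which implies $\theta$ by the inductive hypothesis instantiated at each $w$. Applying this to $\theta = \phi$ inside $\MN[G]$ gives $\MN[G] \models \phi^G$, as required. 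The only delicate point --- hence the main thing to watch --- is keeping the three avatars of ``$\Pi^0_1$-ification'' distinct: the one living inside the forcing relation (from Proposition~\ref{P:Pi1Skolem}), the genuine arithmetic equivalence in $\MN[G]$, and the purely logical implication $\phi_S(W) \lthen \phi$; crucially none of them needs any induction in $\MN[G]$, which is exactly why the corollary holds already over \WKL\ rather than requiring \ACA.
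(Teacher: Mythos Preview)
Your proof is correct and is exactly the approach the paper intends: the paper states only that ``the Skolemization \(\phi_S(W)\) is always equivalent to a \(\Pi^0_1\) formula'' and treats the corollary as immediate from Proposition~\ref{P:Pi1Truth}, whereas you have carefully unpacked the three ingredients (the \(\Pi^0_1\) rewriting of \(\phi_S(W)\), the application of Proposition~\ref{P:Pi1Truth}, and the logical validity \(\phi_S(W)\lthen\phi\)) that make this work. Your explicit simultaneous induction verifying \(\theta_S(W)\lthen\theta\) and \(\theta\lthen\theta_H(W)\) is a welcome addition that the paper leaves implicit.
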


The following result is the key for proving that \(F_\sigma\)-Mathias forcing preserves \RCA\ over \WKL\ + \Bnd{\Sigma^0_2}.

\begin{proposition}[\WKL\ + \Bnd{\Sigma^0_2}]\label{P:Pi2Truth}
  If \(\phi\) is a \(G\)-local \(\Pi^0_2\) sentence of the forcing language, then \(\MN[G] \models \phi\) if and only if there is a condition \((a,A,\mu)\) compatible with \(G\) such that \((a,A,\mu) \Sforces \phi.\)
\end{proposition}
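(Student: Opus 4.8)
The plan is to prove the two implications separately. The implication from right to left is immediate: if $(a,A,\mu)$ is compatible with $G$ and $(a,A,\mu) \Sforces \phi$, then $\MN[G] \models \phi^G$ by Corollary~\ref{C:SkolemTruth}. All the work is in the converse, which I would establish by a density argument resting on the witnessing theorems of Section~\ref{S:Witnessing}.

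For the converse, suppose $\MN[G] \models \phi^G$. I would first fix a condition $(a_0,A_0,\mu_0)$ compatible with $G$ relative to which $\phi$ is local — possible since $\phi$ is $G$-local and locality is inherited by extensions — and write $\phi$ in the form $\forall \bar v\, \exists \bar w\, \theta(\bar v, \bar w)$ with $\theta$ bounded, so that $\phi$ is $\Pi^0_2$ and $\lnot\phi$ is $\Sigma^0_2$, a fortiori $\Sigma^0_3$. Then I would consider the class
\[ \mathcal{D} = \set{(c,C,\kappa) \leq (a_0,A_0,\mu_0) : (c,C,\kappa) \Sforces \phi \text{ or } (c,C,\kappa) \Sforces \lnot\phi} \]
and show it is dense below $(a_0,A_0,\mu_0)$. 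Given $(a,A,\mu) \leq (a_0,A_0,\mu_0)$: if some $(b,B,\nu) \leq (a,A,\mu)$ forces $\phi$, then Theorem~\ref{T:Pi2Witnessing} upgrades this to $(b,B,\nu) \Sforces \phi$, so $(b,B,\nu) \in \mathcal{D}$; otherwise $(a,A,\mu) \forces \lnot\phi$ by the negation clause of Definition~\ref{D:ForcingRelation}, and Corollary~\ref{C:Sigma3Witnessing} applied to the $\Sigma^0_3$ sentence $\lnot\phi$ produces an extension that Skolem-forces $\lnot\phi$, again landing in $\mathcal{D}$. Since $\mathcal{D}$ is definable over $\MN$, with the finitely many names occurring in $\phi$ as parameters, genericity yields a condition $(a_1,A_1,\mu_1) \in \mathcal{D}$ compatible with $G$. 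If $(a_1,A_1,\mu_1) \Sforces \lnot\phi$ held, then Corollary~\ref{C:SkolemTruth} applied to $\lnot\phi$ would give $\MN[G] \models (\lnot\phi)^G$, contradicting $\MN[G] \models \phi^G$; hence $(a_1,A_1,\mu_1) \Sforces \phi$, which is exactly the desired conclusion.

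The one point that needs care — and the thing I expect to be the main obstacle — is that $\mathcal{D}$ must be phrased in terms of the Skolemized forcing relation $\Sforces$ rather than ordinary forcing. The extension produced by Corollary~\ref{C:Sigma3Witnessing} need not be compatible with $G$, so had $\mathcal{D}$ been defined using $\forces$ one could not close off the case where genericity selects a condition that merely forces $\lnot\phi$. Building $\Sforces$ into $\mathcal{D}$ from the outset guarantees that whatever condition genericity produces already Skolem-forces $\phi$ or $\lnot\phi$, so that Corollary~\ref{C:SkolemTruth} applies on the nose. Everything else is routine: the coincidence of $\Pi^0_2$ forcing with Skolemized forcing (Theorem~\ref{T:Pi2Witnessing}) and the weak Skolem witnessing for $\Sigma^0_3$ forcing (Corollary~\ref{C:Sigma3Witnessing}) are precisely the inputs that make this dense class do its job.
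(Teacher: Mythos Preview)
Your overall strategy --- exhibit a dense class of conditions that Skolem-decide $\phi$, meet it by genericity, and finish with Corollary~\ref{C:SkolemTruth} --- is exactly the paper's, and your density argument via Theorem~\ref{T:Pi2Witnessing} and Corollary~\ref{C:Sigma3Witnessing} is fine. The gap is in the \emph{complexity} of your class $\mathcal{D}$, which you need to be $\Sigma^1_1$ for the genericity step to apply.

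You define $\mathcal{D}$ directly in terms of $\Sforces$, but $\Sforces$ hides an existential quantifier over $(c,C,\kappa)$-\emph{local} names, and locality is $\Pi^1_2$. The paper makes exactly this point in the proof of Proposition~\ref{P:Pi3Truth}: the naive class $\set{(a,A,\mu) : (a,A,\mu) \Sforces \phi \lor (a,A,\mu) \Sforces \lnot\phi}$ ``is not a $\Sigma^1_1$ class since $\Sforces$ hides an implicit quantification over all $(a,A,\mu)$-local names which form a $\Pi^1_1$ class.'' Your final paragraph correctly senses that the formulation of $\mathcal{D}$ is the crux, but misidentifies the difficulty: the problem is not that one must use $\Sforces$ rather than $\forces$, it is that $\Sforces$ is itself too complex to appear in $\mathcal{D}$ at all.

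The paper's fix is to bypass $\Sforces$ entirely. Writing $\phi \equiv \forall w\,\lnot\psi(w)$ with $\psi$ a $\Pi^0_1$ formula, Proposition~\ref{P:Pi2Decision} supplies a single submeasure $\varrho_0$ so that the \emph{arithmetical} class
\[
\mathcal{D} = \set{(a,A,\mu) \leq (a_0,A_0,\mu_0) : \mu \leq \varrho_0 \ \lor\ \exists y\,\widehat{\psi}(a,A;y)}
\]
is dense below $(a_0,A_0,\mu_0)$. A condition compatible with $G$ landing in the first alternative satisfies $(a,A,\mu) \forces \phi$, hence $(a,A,\mu) \Sforces \phi$ by Proposition~\ref{P:Pi2Witnessing}; one landing in the second satisfies $(a,A,\mu) \forces \psi(y)$, hence $(a,A,\mu) \Sforces \lnot\phi$ via Proposition~\ref{P:Pi1Witnessing}. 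The Skolemized forcing is thus a \emph{consequence} of membership in an arithmetical $\mathcal{D}$, never part of its definition.
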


\begin{proof}
  Suppose that \(\phi\) is \((a_0,A_0,\mu_0)\)-local, where \((a_0,A_0,\mu_0)\) is compatible with \(G,\) and write \(\phi \equiv \forall w\,\lnot\psi(w),\) where \(\psi(w)\) is \(\Pi^0_1.\) Let \(\varrho_0\) be as in Proposition~\ref{P:Pi2Decision} and consider the class \[\mathcal{D} = \set{(a,A,\mu) \leq (a_0,A_0,\mu_0) : \mu \leq \varrho_0 \lor \exists y\,\widehat{\psi}(a,A;y)}.\] Note that \(\mathcal{D}\) is dense below \((a_0,A_0,\mu_0),\) so there is a condition \((a,A,\mu) \in \mathcal{D}\) which is compatible with \(G.\) By Propositions~\ref{P:Pi2Decision} and~\ref{P:Pi2Witnessing}, if \(\mu \leq \varrho_0\) then \((a,A,\mu) \Sforces \phi,\) and if \(\widehat{\psi}(a,A;y)\) then \((a,A,\mu) \Sforces \lnot\phi.\) In either case, the result follows from Proposition~\ref{C:SkolemTruth}.
\end{proof}

\begin{proposition}[\WKL\ + \Bnd{\Sigma^0_2}]\label{P:PresRCA}
  If \(G\) is generic for \(F_\sigma\)-Mathias forcing, then \(\MN[G] \models \RCA.\)
\end{proposition}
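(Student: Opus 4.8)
The plan is to verify the uniformization axiom in $\MN[G]$ by appealing to the $\Pi^0_2$-truth analysis of Proposition~\ref{P:Pi2Truth}. Recall that uniformization asks: for every $(k{+}1)$-ary function of $\MN[G]$, say $F^G$ for some $G$-local name $F$, if $\MN[G] \models \forall\bar{v}\,\exists x\,(F^G(x,\bar{v}) = 0)$, then there is a $k$-ary function $g$ of $\MN[G]$ with $\MN[G] \models \forall\bar{v}\,(F^G(g(\bar{v}),\bar{v}) = 0)$. The key observation is that the hypothesis is (equivalent to) a $\Pi^0_2$ sentence of the forcing language — it has the form $\forall\bar v\,\exists x\,\phi(\bar v, x)$ where $\phi$ is built from the atomic formula $F(x,\bar v) = \check 0$ and hence is $\Sigma^0_1$, in fact bounded. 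So once we locate the right condition in $\mathcal{G}$, we can extract a witnessing name.

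In detail, I would proceed as follows. Since $F$ is $G$-local, fix a condition $(a_0,A_0,\mu_0)$ compatible with $G$ for which $F$ is $(a_0,A_0,\mu_0)$-local. The sentence $\theta \equiv \forall\bar{v}\,\exists x\,(F(x,\bar{v}) = \check{0})$ is then $(a_0,A_0,\mu_0)$-local and $\Pi^0_2$, and by hypothesis $\MN[G] \models \theta^G$. By Proposition~\ref{P:Pi2Truth}, there is a condition $(a,A,\mu)$ compatible with $G$ such that $(a,A,\mu) \Sforces \theta$; that is, $(a,A,\mu) \forces \theta_S(W)$ for some $(a,A,\mu)$-local unary name $W$. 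Chasing through Definition~\ref{D:Skolem}, the Skolemization of $\forall\bar v\,\exists x\,\phi(\bar v,x)$ produces, for each $\bar v$, a value $W(\langle\bar v\rangle,0)$ intended to witness the inner existential, so we can define a $k$-ary name $G_0(\bar{v})$ by extracting this component of $W$ (formally $G_0(\bar v) = W(j(\bar v))$ for the appropriate packing function $j$, composed as in the superposition clause for names). Because $W$ is $(a,A,\mu)$-local, so is $G_0$, hence $G_0$ is $G$-local and $G_0^G$ is a genuine $k$-ary function of $\MN[G]$. Unwinding what $(a,A,\mu) \forces \theta_S(W)$ says — via Proposition~\ref{P:Pi1Skolem}, $\theta_S(W)$ is essentially $\Pi^0_1$ — it forces precisely $\forall\bar v\,(F(G_0(\bar v),\bar v) = \check 0)$, and this is a $\Pi^0_1$ sentence forced by a condition compatible with $G$, so Proposition~\ref{P:Pi1Truth} gives $\MN[G] \models \forall\bar v\,(F^G(G_0^G(\bar v),\bar v) = 0)$. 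Thus $G_0^G$ is the required uniformizing function, and $\MN[G] \models \RCA$.

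The main obstacle, and the only point requiring genuine care, is the bookkeeping in the second step: confirming that the Skolem name $W$ supplied by Proposition~\ref{P:Pi2Truth} really does, after projecting to the relevant coordinate, yield a name that forces $\forall\bar v\,(F(G_0(\bar v),\bar v) = \check 0)$, rather than merely something weaker. This amounts to tracing the clauses of Definition~\ref{D:Skolem} for a $\forall\exists$-prefix applied to an atomic matrix, together with the fact (from Proposition~\ref{P:BoundedWitnessing}, or directly) that for the atomic formula the Skolemization is the formula itself — so $\theta_S(W) \equiv \forall\bar v\,(F(W(j(\bar v)),\bar v) = \check 0)$ up to the trivial syntactic rearrangements handled by Proposition~\ref{P:Pi1Skolem}. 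Everything else — locality of the projected name, totality of $G_0^G$ via Proposition~\ref{P:GenericFun}, the final appeal to $\Pi^0_1$-truth — is routine. I would also remark that no stronger induction than what is available is used: the whole argument stays within the $\Pi^0_2$/$\Pi^0_1$ machinery already developed, which is why $\WKL + \Bnd{\Sigma^0_2}$ suffices.
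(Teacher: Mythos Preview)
Your proof is correct and follows essentially the same route as the paper: from $\MN[G] \models \forall\bar{v}\,\exists x\,(F^G(x,\bar{v})=0)$ you pass via Proposition~\ref{P:Pi2Truth} to a condition compatible with $G$ that Skolem-forces the $\Pi^0_2$ sentence, extract the witnessing name, and conclude by Proposition~\ref{P:Pi1Truth}. The only cosmetic difference is that the paper packages the extraction step as a citation of Corollary~\ref{C:Sigma1Uniformization}, whereas you inline it by projecting the relevant coordinate of the Skolem name $W$---this is exactly the content of that corollary, so the arguments are the same.
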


\begin{proof}
  Let \(H\) be a \((k+1)\)-ary name such that \[\MN[G] \models \forall\bar{x}\,\exists y\,{H^G(\bar{x},y) = 0}.\] Since this \(\Pi^0_2\) statement is true, it follows from Proposition~\ref{P:Pi2Truth} that there is a condition \((a,A,\mu)\) compatible with \(G\) such that \[(a,A,\mu) \forces \forall\bar{x}\,\exists y\,{H(\bar{x},y) = 0}.\] Then, by Corollary~\ref{C:Sigma1Uniformization}, there is a \(k\)-ary \((a,A,\mu)\)-local name \(F\) such that \[(a,A,\mu) \forces \forall\bar{x},z\,{H(\bar{x},F(\bar{x})) = 0}.\] It follows from Proposition~\ref{P:Pi1Truth} that \(\MN[G] \models \forall\bar{x}\,{H^G(\bar{x},F^G(\bar{x})) = 0}.\)
\end{proof}

The following result is key to prove Theorem~\ref{T:PresACA}.

\begin{proposition}[\ACA]\label{P:Pi3Truth}
  If \(\phi\) is a \(G\)-local \(\Pi^0_3\) sentence of the forcing language, then \(\MN[G] \models \phi^G\) if and only if there is a condition \((a,A,\mu)\) compatible with \(G\) such that \((a,A,\mu) \Sforces \phi.\)
\end{proposition}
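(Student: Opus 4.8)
The plan is to prove the backward implication directly from Corollary~\ref{C:SkolemTruth} and the forward implication by a density argument patterned on the proof of Proposition~\ref{P:Pi2Truth}, but invoking the witnessing results one level higher up the arithmetical hierarchy. For the backward direction there is nothing to do: if \((a,A,\mu)\) is compatible with \(G\) and \((a,A,\mu)\Sforces\phi,\) then \(\MN[G]\models\phi^G\) by Corollary~\ref{C:SkolemTruth} (using that \(\phi\) is \(G\)-local). For the forward direction, assume \(\MN[G]\models\phi^G.\) Since \(\phi\) is \(G\)-local, I would first fix a condition \((a_0,A_0,\mu_0)\) compatible with \(G\) below which \(\phi\) — and hence also \(\lnot\phi\) — is local.

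The main work is then to consider the class
\[\mathcal D = \set{(a,A,\mu)\leq(a_0,A_0,\mu_0) : (a,A,\mu)\Sforces\phi \ \lor\ (a,A,\mu)\Sforces\lnot\phi}\]
and show it is dense below \((a_0,A_0,\mu_0).\) Fix \((b,B,\nu)\leq(a_0,A_0,\mu_0).\) If \((b,B,\nu)\forces\phi,\) then, since \(\phi\) is \(\Pi^0_3,\) Theorem~\ref{T:Pi3Witnessing} supplies an extension that \(\Sforces\phi,\) which lies in \(\mathcal D.\) If \((b,B,\nu)\nforces\phi,\) then unwinding Definition~\ref{D:ForcingRelation} together with the equivalence \(\forces\phi\liff\forces\lnot\lnot\phi\) of Proposition~\ref{P:ForcingRelation} shows that some \((c,C,\kappa)\leq(b,B,\nu)\) must force \(\lnot\phi\); since \(\lnot\phi\) is \(\Sigma^0_3,\) Corollary~\ref{C:Sigma3Witnessing} then yields a further extension that \(\Sforces\lnot\phi,\) again in \(\mathcal D.\) Hence \(\mathcal D\) is dense, so by genericity there is a condition \((a,A,\mu)\in\mathcal D\) compatible with \(G.\) If \((a,A,\mu)\Sforces\lnot\phi,\) then Corollary~\ref{C:SkolemTruth} gives \(\MN[G]\models(\lnot\phi)^G,\) i.e.\ \(\MN[G]\models\lnot\phi^G,\) contradicting the assumption; therefore \((a,A,\mu)\Sforces\phi,\) which is what we wanted.

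The step I expect to demand the most care is the same one that is handled explicitly in Proposition~\ref{P:Pi2Truth}: \(\mathcal D\) must be of low enough complexity to be met by \(G,\) whereas as written the relation \(\Sforces\) hides an existential quantifier over names. The remedy is to pass first — densely, via Lemma~\ref{L:StarFusion} applied to the \(\Pi^0_1\) matrix of \(\phi\) — to conditions below which the \emph{uniform} Skolem name \(W^\phi_S\) provided by Proposition~\ref{P:Pi3Witnessing} is guaranteed to work, so that \((a,A,\mu)\Sforces\phi\) may be replaced by the \(\Pi^0_1\) forcing statement \((a,A,\mu)\forces\phi_S(W^\phi_S)\); and, writing \(\lnot\phi\equiv\exists w\,\chi(w)\) with \(\chi\) a \(\Pi^0_2\) matrix, to use the uniform witnessing name of Proposition~\ref{P:Pi2Witnessing} for \(\chi\) in order to turn \((a,A,\mu)\Sforces\lnot\phi\) into an arithmetical forcing statement as well. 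With these substitutions \(\mathcal D\) becomes arithmetically definable over \(\MN,\) as in Proposition~\ref{P:Pi2Truth}.

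A secondary point worth recording is that Theorem~\ref{T:Pi3Witnessing} and Corollary~\ref{C:Sigma3Witnessing} produce only \emph{some} extension, not one compatible with \(G,\) which is precisely why the argument is routed through a dense class rather than applied to \((a_0,A_0,\mu_0)\) directly. All the ingredients used — Theorem~\ref{T:Pi3Witnessing}, Corollary~\ref{C:Sigma3Witnessing}, Lemma~\ref{L:StarFusion}, Propositions~\ref{P:Pi3Witnessing} and~\ref{P:Pi2Witnessing}, and Corollary~\ref{C:SkolemTruth} — are available over \ACA\ (indeed over \WKL\ for the last), so the argument stays within the stated base theory.
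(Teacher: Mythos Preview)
Your overall outline --- backward via Corollary~\ref{C:SkolemTruth}, forward by showing the class \(\mathcal D\) of conditions Skolemized-deciding \(\phi\) is dense below a condition compatible with \(G\) and then invoking genericity --- is exactly the paper's, and you have correctly flagged the obstacle: \({\Sforces}\) hides a quantifier over local names, so the naive \(\mathcal D\) is not obviously \(\Sigma^1_1.\) The gap is in your remedy. You propose to pass ``densely, via Lemma~\ref{L:StarFusion}'' to conditions below which the uniform name \(W^\phi_S\) of Proposition~\ref{P:Pi3Witnessing} works, and then replace \((a,A,\mu)\Sforces\phi\) by the \(\Pi^0_1\) statement \((a,A,\mu)\forces\phi_S(W^\phi_S).\) But unlike Proposition~\ref{P:Pi2Witnessing}, Proposition~\ref{P:Pi3Witnessing} does not give a name uniform in all conditions: it produces a \emph{single} extension \((a_0,A,\mu)\leq(a_0,A_0,\mu_0)\) together with a name built from that extension's data \((A,\mu),\) and the name is only guaranteed to work below \((a_0,A,\mu).\) That extension need not be compatible with \(G,\) so you cannot simply restrict to conditions below it; and the class of conditions that themselves satisfy the conclusion of Lemma~\ref{L:StarFusion} is described by a universal second-order quantifier over all further extensions, hence \(\Pi^1_1,\) so genericity does not apply to it either. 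The analogy with Proposition~\ref{P:Pi2Truth} breaks precisely here: the submeasure \(\varrho_0\) used there is computed once from \((a_0,A_0,\mu_0)\) with no intervening extension. (Your handling of the \(\lnot\phi\) side via the genuinely uniform name of Proposition~\ref{P:Pi2Witnessing} is fine.)

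The paper's fix is different: it keeps the second-order existential over names but turns locality into an arithmetical side-condition. Its \(\mathcal D\) consists of those \((a,A,\mu)\leq(a_0,A_0,\mu_0)\) for which there exists a partial unary name \(W\) with \(\mu \leq \bigwedge_{a\subseteq b\subseteq A}(\vartheta_b\vee|b|)\) --- the \(\vartheta_b\) being the submeasures of Proposition~\ref{P:FunTotal} for \(W,\) so that this inequality forces \(W\) to be \((a,A,\mu)\)-local --- and either \((a,A,\mu)\forces\phi_S(W)\) or \((a,A,\mu)\forces\lnot\phi_H(W).\) Both forcing clauses reduce to \(\Pi^0_1\) by Proposition~\ref{P:Pi1Skolem} (recall \((\lnot\phi)_S(W)\equiv\lnot\phi_H(W)\)), so \(\mathcal D\) is \(\Sigma^1_1;\) density follows from Theorem~\ref{T:Pi3Witnessing} and Corollary~\ref{C:Sigma3Witnessing} exactly as you argue. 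Genericity then yields \((a,A,\mu)\in\mathcal D\) compatible with \(G,\) and the submeasure constraint guarantees the witnessing \(W\) is \((a,A,\mu)\)-local, whence \((a,A,\mu)\Sforces\phi\) or \((a,A,\mu)\Sforces\lnot\phi.\)
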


\begin{proof}
  By Corollary~\ref{C:Sigma3Witnessing} and Proposition~\ref{P:Pi3Witnessing}, we know that \[\set{(a,A,\mu) \leq (a_0,A_0,\mu_0) : (a,A,\mu) \Sforces \phi \lor (a,A,\mu) \Sforces \lnot\phi}\] is dense below \((a_0,A_0,\mu_0).\) However, this is not a \(\Sigma^1_1\) class since \({\Sforces}\) hides an implicit quantification over all \((a,A,\mu)\)-local names which form a \(\Pi^1_1\) class. To remedy this, we consider the class \(\mathcal{D}\) of conditions \((a,A,\mu) \leq (a_0,A_0,\mu_0)\) for which there is a partial unary name \(W\) such that \[\mu \leq \bigwedge_{a \subseteq b \subseteq A} (\vartheta_b \vee |b|)\] and either \((a,A,\mu) \forces \phi_S(W)\) or \((a,A,\mu) \forces \lnot\phi_H(W),\) where \(\seq{\vartheta_b:b \in \fin{\N}}\) is as in Proposition~\ref{P:FunTotal} for the name \(W.\) This is a \(\Sigma^1_1\) class and it too is dense below \((a_0,A_0,\mu_0)\). Therefore, we can find a condition \((a,A,\mu) \in \mathcal{D}\) which is compatible with \(G.\) Since \(\mu \leq \tau_W\) implies that \(W\) is \((a,A,\mu)\)-local, it follows that either \((a,A,\mu) \Sforces \phi\) or \((a,A,\mu) \Sforces \lnot\phi\) and the result follows from Corollary~\ref{C:SkolemTruth}.
\end{proof}

\begin{proof}[Proof of Theorem~\ref{T:PresACA}]
  Let \(H\) be a \((k+1)\)-ary name. Since \(\MN[G]\) satisfies \Ind{\Sigma^0_1}, we see that \[\MN[G] \models \forall\bar{x}\,\exists y\,\forall z\,{H^G(\bar{x},y) \leq H^G(\bar{x},z)}.\] Therefore, by Proposition~\ref{P:Pi3Truth}, there is a condition \((a,A,\mu)\) compatible with \(G\) such that \[(a,A,\mu) \Sforces \forall\bar{x}\,\exists y\,\forall z\,H(\bar{x},y) \leq H(\bar{x},z).\] Then, as in Corollary~\ref{C:Sigma2Uniformization}, there is a \(k\)-ary name \(F\) such that \[(a,A,\mu) \forces \forall\bar{x},z\,{H(\bar{x},F(\bar{x})) \leq H(\bar{x},z)}.\] It follows from Proposition~\ref{P:Pi1Truth} that \[\MN[G] \models \forall\bar{x},z\,{H^G(\bar{x},F^G(\bar{x})) \leq H^G(\bar{x},z)}.\qedhere\]
\end{proof}

Before we prove Theorem~\ref{T:PresIS2}, we show that \WKL\ is preserved under the weaker assumption of \WKL\ + \Bnd{\Sigma^0_2}.

\begin{proposition}[\WKL\ + \Bnd{\Sigma^0_2}]\label{P:PresWKL}
  If \(G\) is generic for \(F_\sigma\)-Mathias forcing, then \(\MN[G] \models \WKL.\)
\end{proposition}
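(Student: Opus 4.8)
The plan is to reduce \WKL\ in \(\MN[G]\) to the non-emptiness of certain ground model \(\Pi^0_1\) classes, which \WKL\ in \(\MN\) then resolves. In the functional setting \WKL\ asserts that every \(T \in \FN_1[G]\) coding an infinite binary subtree of \(2^{<\infty}\) has an infinite path in \(\FN_1[G]\); such a \(T\) is \(T_0^G\) for a \(G\)-local unary name \(T_0\), and ``\(T_0\) codes an infinite binary tree'' is a \(\Pi^0_2\) sentence of the forcing language. So the first step is to invoke Proposition~\ref{P:Pi2Truth} to get a condition \((a_0,A_0,\mu_0)\) compatible with \(G\) forcing ``\(T_0\) is an infinite binary tree.'' The goal will then be to prove that the class of conditions \((b,B,\nu) \le (a_0,A_0,\mu_0)\) that force, for some ground model \(p \in \pow{\N}\), the canonical name \(\check p\) to satisfy \(\forall n\,(\check p \res n \in T_0)\) is dense below \((a_0,A_0,\mu_0)\); once this is known, a condition in this class is compatible with \(G\), and Proposition~\ref{P:Pi1Truth} applied to the \(\Pi^0_1\) sentence \(\forall n\,(\check p \res n \in T_0)\) delivers \(\MN[G] \models ``p \text{ is an infinite path through } T_0^G"\), with \(p = \check p^G \in \FN_1[G]\). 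Since \(T_0\) is arbitrary, \(\MN[G] \models \WKL\).

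Next I would fix \((b,B,\nu) \le (a_0,A_0,\mu_0)\) and analyze what it means to force \(\forall n\,(\check p \res n \in T_0)\). Unwinding the atomic clause of Definition~\ref{D:ForcingRelation} through Proposition~\ref{P:Pi1Forcing}, this holds exactly when \(p\) lies in the ground model class \(\mathcal{C}(b,B)\) of all \(p\) no initial segment of which is assigned a value \(\neq 1\) in \(T_0\) by any branch of \(\tree{b,B}\); and \(\mathcal{C}(b,B)\) is (coded by) a \(\Pi^0_1\) class, since any such value is assigned at a finite stage. Because the forcing relation here is insensitive to the submeasure part of the condition, the density statement reduces to: for every infinite \(A \subseteq B\) lying below \((b,B,\nu)\) there is an infinite \(A' \subseteq A\) with \(\mathcal{C}(b,A') \ne \varnothing\), where one may take any submeasure, e.g.\ \(X \mapsto \lvert X \cap A'\rvert\), joined with the old one. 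By \WKL\ in \(\MN\), \(\mathcal{C}(b,A') \ne \varnothing\) follows once the tree \(S(b,A')\) of strings all of whose initial segments escape every branch of \(\tree{b,A'}\) is infinite.

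The hard part will be constructing \(A'\). Here I would use Proposition~\ref{P:Pi2Witnessing} on the \(\Pi^0_2\) sentence ``\(T_0\) is infinite'' to obtain an \((a_0,A_0,\mu_0)\)-local Skolem name \(W\) with \((a_0,A_0,\mu_0) \forces \forall n\,(W(\pair{n}{0}) \text{ codes an element of } 2^n \cap T_0)\), so that as soon as a branch of \(\tree{b,B}\) decides \(W(\pair{n}{0})\), the corresponding refined condition forces that string and all its initial segments into \(T_0\). For each string \(u\) I would then apply the uniform procedure of Proposition~\ref{P:Submeasure} to manufacture a submeasure that controls which infinite parts admit a branch of their tree assigning \(u\) a value \(\neq 1\) in \(T_0\), and combine these submeasures with a fusion argument — in the spirit of the Fusion Lemma of Section~\ref{S:Forcing} and of Lemma~\ref{L:StarFusion} — shrinking the infinite part level by level so that at each level some length-\(N\) string survives in \(S(b,A')\). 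The engine of the shrinkage, argued just as in the proofs of Propositions~\ref{P:FunTotal} and~\ref{P:Pi2Decision}, should be that a full level \(2^N\) cannot be ``permanently covered'' by killed strings across all \(\mu\)-large refinements: by subadditivity some \(\mu\)-large piece would then carry a forced \(W\)-witness at level \(N\) whose own tree nevertheless assigns a value \(\neq 1\) to one of that witness's initial segments, contradicting that \(W\)-witnesses land in \(T_0\) together with the fact that ``\(T_0\) is a tree'' is forced; all the submeasure arithmetic here is where \Bnd{\Sigma^0_2} gets used, as throughout Section~\ref{S:Submeasures}. I expect the main obstacle to be organizing this fusion correctly — in particular making sure, via the submeasure machinery rather than any naive common refinement, that the surviving strings at successive levels can be chosen coherently even when the branches witnessing the ``killings'' are mutually incompatible. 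Once \(A'\) is obtained, \(S(b,A')\) is infinite, \WKL\ in \(\MN\) yields a \(p \in \mathcal{C}(b,A')\), and the corresponding condition forces \(\forall n\,(\check p \res n \in T_0)\), which would complete the density argument and hence the proof.
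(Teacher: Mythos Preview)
Your approach has a genuine gap: the density claim you set out to prove is simply false. You are trying to show that below any condition there is an extension forcing some \emph{ground-model} $p$ to be a path through $T_0$, but a tree in the extension need not have any ground-model branch, even after refining the condition. Concretely, let $T_0$ be the name with $(\tau,\sigma,y) \in T_0$ iff $|\tau| \geq |\sigma|$ and $y = [\sigma \subseteq \tau]$, so that $T_0^G = \{\sigma : \sigma \subseteq \chi_G\}$ is the infinite tree whose unique branch is $\chi_G$. For a condition $(c,C,\kappa)$ to force $\forall n\,(\check{p}\res n \in T_0)$, every $\tau \in \tree{c,C}$ of length at least $n$ must extend $p\res n$; this forces $\{0,\dots,n-1\}\cap(C\setminus c)=\varnothing$ for every $n$, hence $C=c$ is finite and $(c,C,\kappa)$ is not a condition. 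Thus your dense class is empty for this $T_0$. The difficulty is not in organizing the fusion but in the target: no amount of shrinking can manufacture a ground-model object that does not exist.

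The paper avoids this by allowing the branch to depend on $G$: it builds a \emph{name} $B$ for the branch rather than a ground-model set. Writing $S$ for a name with $S(m)$ coding the $m$-th level, the paper forms, for each $n$, the closed class $\mathcal{B}_n$ of partial names $B$ such that whenever $\tau\in 2^{\leq n}$ decides $S(m)=x$ for some $m\leq n$, the name $B$ already commits (via $\tau$) to following some $\sigma\in x$. Each $\mathcal{B}_n$ is nonempty, and \WKL\ in the ground model (applied to this $\Pi^0_1$ class of names) yields $B\in\bigcap_n\mathcal{B}_n$; this $B$ has the same domain as $S$, hence is $G$-local, and $B^G$ is a branch of $S^G$. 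No fusion, no density argument, and no submeasure manipulation is needed.
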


\begin{proof}
  Since we already know that \(\MN[G] \models \RCA,\) it is enough to check that every infinite subtree of \(2^{<\infty}\) in \(\MN[G]\) has a branch. Let \(S\) be a \(G\)-local name for an infinite subtree of \(2^{<\infty}.\) Specifically, let \(S\) be a unary name such that each \(S(n)\) is forced to be (a code for) the \(n\)-th level of the tree in question. (We will tacitly identify the code with the coded level set.)

  For each \(n,\) let \(\mathcal{B}_n\) be the closed class of all partial names \(B\) such that if \(\tau \in 2^{\leq n},\) \(m \leq n,\) and \(x \subseteq 2^m\) are such that \((\tau,m,x) \in S\) then there is a \(\sigma \in x\) such that \((\tau,i,\sigma(i)) \in B\) for every \(i \leq m.\) Each \(\mathcal{B}_n\) is nonempty and so, by \WKL, the intersection \(\mathcal{B} = \bigcap_{n=0}^\infty \mathcal{B}_n\) is nonempty too. If \(B \in \mathcal{B}\) then \(B\) is a partial name with the same domain as \(S.\) Thus, \(B\) is also \(G\)-local and \(\MN[G] \models \forall n\,{B^G\res n \in S^G(n)}.\) 
\end{proof}
 
\begin{proof}[Proof of Theorem~\ref{T:PresIS2}]
  Since we already know that \(\MN[G] \models \WKL,\) it is enough to check that \(\MN[G] \models \Ind\Sigma^0_2.\) Let \(\phi(u,v)\) be a \(G\)-local \(\Pi^0_1\) formula of the forcing language. Let \(\seq{\lambda_{a,x}: a \in \fin{N},x,y \in \N}\) be as in Lemma~\ref{L:StarSubmeasure} for \(\phi(u,v).\)

  Let \((a_0,A_0,\mu_0)\) be any condition compatible with \(G\) such that \(\phi(u,v)\) is \((a_0,A_0,\mu_0)\)-local and \((a_0,A_0,\mu_0) \forces \phi(x_0,y_0)\) for some \(x_0,y_0 \in \N.\) Then, for each $x \in \N$ define \[\nu_x = \bigwedge_{a_0 \subseteq a \subseteq A_0} \lambda_{a,x} \vee |a|.\] By \Ind{\Sigma^0_2} there is a minimal \(x \leq x_0\) such that \[(\mu_0\wedge\nu_0\wedge\cdots\wedge\nu_x)(A_0) < \infty.\] 

  Let \(\mu = \mu_0\wedge\bigwedge_{z<x} \nu_z.\) Then, by Lemma~\ref{L:StarSubmeasure}, \(\mu(A_0) = \infty\) and \[(a_0,A_0,\mu) \forces \forall u < x\,\forall v\,{\lnot\phi(u,v)}.\] Also, there are an extension \((a,A,\mu) \leq (a_0,A_0,\mu)\) and a \(y \in \N\) such that \((a,A,\mu) \forces \phi(x,y).\) Therefore, \((a,A,\mu)\) forces that \(x\) is least such that \(\exists y\,\phi(x,y).\)

  Now consider the class \[\mathcal{D} = \set{(a,A,\mu) \leq (a_0,A_0,\mu_0) : \exists x,y\,(\mu \leq {\textstyle\bigwedge_{z<x} \nu_z^{a,A}} \land \widehat{\phi}(a,A;x,y))},\] where \(\nu_z^{a,A} = \bigwedge_{a \subseteq b \subseteq A} \lambda_{z,b} \vee |b|\) as above, and \(\widehat{\phi}(a,A;x,y)\) is as in Proposition~\ref{P:Pi1Forcing}. The above shows that \(\mathcal{D}\) is dense below \((a_0,A_0,\mu_0)\) and hence there is a condition \((a,A,\mu) \leq (a_0,A_0,\mu_0)\) compatible with \(G\) such that \((a,A,\mu) \in \mathcal{D}.\) Then \((a,A,\mu)\) forces that there is a minimal \(x\) such that \(\MN[G] \models \exists v\,\phi^G(x,v).\)
\end{proof}

\section{Applications}\label{S:Applications}

\subsection{Cohesive sets}

The following proposition clarifies the relation between \(F_\sigma\)-Mathias forcing and traditional Mathias forcing.

\begin{proposition}[\WKL\ + \Bnd{\Sigma^0_2}]\label{P:Traditional}
  If \(G\) is generic for \(F_\sigma\)-Mathias forcing, then \(G\) is \(\Pi^0_1\)-generic but not \(\Pi^0_2\)-generic for traditional Mathias forcing.
\end{proposition}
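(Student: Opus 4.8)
The plan is to analyze $F_\sigma$-Mathias forcing through its projection $\pi\colon(a,A,\mu)\mapsto(a,A)$ onto traditional Mathias forcing. First I would observe that $\pi$ is onto: every infinite set $A$ coded in $\MN$ carries the counting submeasure $c(x)=|x|$, which has $c(A)=\infty$, so $(a,A)=\pi(a,A,c)$. Second, $\pi$ pulls back dense classes to dense classes, \emph{provided} the members of the dense class may be taken with infinite part coded in $\MN$: if $\mathcal D$ is such a dense class of Mathias conditions then, given a condition $(a_0,A_0,\mu_0)$, density of $\mathcal D$ provides $(a,A)\in\mathcal D$ with $a_0\subseteq a\subseteq A_0$, $A\subseteq A_0$, and $A$ coded in $\MN$, whence $(a,A,\mu_0\vee c)\leq(a_0,A_0,\mu_0)$ is an $F_\sigma$-Mathias condition lying in $\pi^{-1}[\mathcal D]$, since $(\mu_0\vee c)(A)\geq|A|=\infty$. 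As $\mathcal G$ meets every dense class of $F_\sigma$-Mathias conditions definable over $\MN$, it meets such $\pi^{-1}[\mathcal D]$, and by Proposition~\ref{P:GenericReal} the associated Mathias condition is then compatible with $G$ and lies in $\mathcal D$.

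For the positive assertion I would use that the $\Pi^0_1$ part of the forcing relation is a matter of the trees $\tree{a,A}$ alone. By Proposition~\ref{P:Pi1Forcing}, whether $(a,A,\mu)\forces\psi$, for $\psi$ a $\Pi^0_1$ sentence of the forcing language, depends only on $\tree{a,A}$, i.e.\ only on the Mathias condition $(a,A)$; and the Mathias forcing relation on $\Pi^0_1$ sentences is read off the same tree. Since deleting a finite set from $A$ never destroys $\mu(A)=\infty$, the relations ``some extension forces $\psi$'' agree as well, so $(a,A,\mu)$ decides $\psi$ exactly when $(a,A)$ does. The Mathias conditions deciding $\psi$ form a dense class whose members may be taken with infinite part coded in $\MN$, because below any Mathias condition one reaches a deciding condition by finitely extending the stem and deleting finitely many points from the infinite part. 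Pulling this class back through $\pi$ and meeting it with $\mathcal G$ yields a Mathias condition compatible with $G$ that decides $\psi$; hence $G$ is $\Pi^0_1$-generic for Mathias forcing.

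For the negative assertion the pullback argument breaks down, since a $\Pi^0_2$-definable dense family of Mathias conditions can genuinely require members whose infinite part is not coded in $\MN$, whereas every condition in $\mathcal G$ carries an infinite part coded in $\MN$ together with a submeasure of infinite mass on it. My plan is to leverage exactly this gap: the submeasure component of a condition of $\mathcal G$ records a $\Pi^0_2$ commitment about the generic object (or about a suitably chosen name evaluated at it, in the spirit of Proposition~\ref{P:FunTotal}) that Mathias forcing has no mechanism either to make or to recognize. Concretely I would isolate a $\Pi^0_2$ sentence $\psi$ of the forcing language with $\MN[G]\models\psi$ such that no Mathias condition compatible with $G$ forces $\psi$ and none forces $\lnot\psi$; producing this $\psi$ and verifying both halves is the crux. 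It amounts to a quantitative form of the fact that the Daguenet topology is strictly finer than the Ellentuck topology, the extra refinement being the submeasure, which is invisible to Mathias forcing beyond the $\Pi^0_1$ level.

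The main obstacle, then, is the construction of the witnessing $\Pi^0_2$ sentence $\psi$: one needs $\psi$ to be forced by some $F_\sigma$-Mathias condition that a density argument drives into $\mathcal G$ (so $\MN[G]\models\psi$), yet simultaneously undecidable by Mathias forcing over $\MN$ — that is, for every Mathias condition $(a,A)$ with $a\subseteq G\subseteq A$, both $\psi$ and $\lnot\psi$ must have extensions that force them, with no single $(a,A)$ deciding either. Balancing these two requirements is the delicate point, and it is precisely where the submeasure apparatus of Sections~\ref{S:Submeasures} and~\ref{S:Forcing} does work that has no Mathias analogue; the rest of the proof is the bookkeeping of the preceding paragraphs.
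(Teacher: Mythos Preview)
Your treatment of the $\Pi^0_1$-generic half is correct and aligns with the paper: the point is exactly that the $\Pi^0_1$ forcing relation depends only on $\tree{a,A}$ (Proposition~\ref{P:Pi1Forcing}), so the two forcing notions agree at this level and your projection/pullback argument goes through.

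The negative half, however, has a genuine gap: you never construct the witnessing $\Pi^0_2$ sentence, and you explicitly flag this construction as ``the crux'' and ``the main obstacle.'' That is precisely the content of the proof. The paper's missing idea is \emph{domination}. For a ground-model function $F$, let $\mu_F$ be the submeasure (via Proposition~\ref{P:Submeasure}) whose finite-measure ideal is generated by the closed class of sets whose enumeration function dominates $F$. A short computation shows that the class of $F_\sigma$-Mathias conditions $(b,B,\nu)$ with $\nu \leq \mu_F$ for some $F$ is dense, so some such condition lies in $\mathcal{G}$. Writing $E$ for the name of the enumeration function of $G$, the sentence
\[
\phi_F \equiv \forall m\,\exists n\,(n \geq m \land E(n) < F(n))
\]
is then true in $\MN[G]$, since $\mu_F(G)=\infty$ forces $G$ out of the dominating class. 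But in traditional Mathias forcing, \emph{every} condition $(b,B)$ forces $\lnot\phi_F$: below any $(b,B)$ one may thin $B$ so that its enumeration dominates $F$, so no extension can force $\phi_F$. Thus a Mathias condition compatible with $G$ forces $\lnot\phi_F$ while $\MN[G]\models\phi_F$, and the forcing--truth correspondence fails at $\Pi^0_2$.

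Note also that your target is more restrictive than necessary. You ask for $\psi$ that is \emph{undecided} by every Mathias condition compatible with $G$ (``both $\psi$ and $\lnot\psi$ must have extensions that force them, with no single $(a,A)$ deciding either''). The paper's $\phi_F$ does not have this feature---every Mathias condition decides it, as $\lnot\phi_F$---yet this suffices: what fails is not that $G$ misses a deciding class, but that the decision disagrees with truth. You only need $\MN[G]\models\psi$ together with ``no Mathias condition compatible with $G$ forces $\psi$''; requiring that none force $\lnot\psi$ either is an unnecessary (and here false) extra constraint.
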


\begin{proof}
  The fact that \(G\) is \(\Pi^0_1\)-generic is an immediate consequence of Proposition~\ref{P:Pi1Forcing} since the formula \(\widehat{\phi}\) is independent of the submeasure component of the \(F_\sigma\)-Mathias condition. Thus the forcing relation for \(\Pi^0_1\) formulas of the forcing language is the same for \(F_\sigma\)-Mathias forcing as for traditional Mathias forcing.

  Given a function \(F:\N\to\N,\) let \(\mu_f\) be the submeasure such that \(\FID(\mu_f)\) is generated as in Proposition~\ref{P:Submeasure} by the closed class of all \(A \subseteq \N\) whose enumeration functions dominate \(F.\) Note that the \(\Sigma^1_1\) class \[\mathcal{F} = \set{(b,B,\nu) : \exists F\,{\nu \leq \mu_F}}\] is open dense. Indeed, given \((a,A,\mu),\) \(F(0) = 0\) and for each \(n \geq 1\) let \(F(n)\) be the first element of \(A\) such that \(\mu(A \cap \set{F(n-1),\dots,F(n)-1}) \geq n.\) Then \((\mu\wedge\mu_F)(A) = \infty\) and hence \((a,A,\mu\wedge\mu_F) \in \mathcal{F}.\)

  If \((b,B,\nu)\) is such that \(\nu \leq \mu_F\) and if \(E\) is the name for the enumeration function of the generic real, then \[(b,B,\nu) \forces \forall m\,\exists n\,(n \geq m \land E(n) < F(n))\] in \(F_\sigma\)-Mathias forcing, whereas \[(b,B) \forces \exists m\,\forall n\,(n \geq m \lthen E(n) > F(n))\] in traditional Mathias forcing.   
\end{proof}

\noindent
A similar argument shows that \(F_\sigma\)-Mathias forcing does not add a dominating real. However, \(\Pi^0_1\) genericity suffices to show that the generic real is either almost contained in or almost disjoint from any ground model subset of \(\N.\) By Theorem~\ref{T:PresIS2}, we can iterate \(F_\sigma\)-Mathias forcing over a model of \WKL\ + \Ind{\Sigma^0_2} to obtain a model of \WKL\ + \COH\ + \Ind{\Sigma^0_2}.

\begin{corollary}\emph{(Cholak--Jockusch--Slaman~\cite{CholakJockuschSlaman})}
  \textup{\WKL\ + \COH\ + \Ind{\Sigma^0_2}} is \(\Pi^1_1\)-conservative over \textup{\WKL\ + \Ind{\Sigma^0_2}}.
\end{corollary}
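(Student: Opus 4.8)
The plan is to derive this as a standard corollary of the preservation theorem for \(\WKL + \Ind{\Sigma^0_2}\) (Theorem~\ref{T:PresIS2}) together with the cohesiveness of \(F_\sigma\)-Mathias generics, using the usual transfer between \(\Pi^1_1\)-conservativity and a model-theoretic extension statement. First I would reduce the corollary to the claim that every countable model \(\MN \models \WKL + \Ind{\Sigma^0_2}\) is contained, with the same first-order part, in some \(\MN' \models \WKL + \COH + \Ind{\Sigma^0_2}.\) Granting this, suppose \(\sigma \equiv \forall X\,\theta(X)\) is \(\Pi^1_1\) with \(\theta\) arithmetic, provable in \(\WKL + \COH + \Ind{\Sigma^0_2};\) if it failed in \(\WKL + \Ind{\Sigma^0_2},\) I would pick a countable \(\MN \models \WKL + \Ind{\Sigma^0_2} + \lnot\sigma,\) fix \(A \in \MN\) with \(\MN \models \lnot\theta(A),\) and pass to the extension \(\MN'.\) Since arithmetic formulas depend only on the first-order structure and the interpretations of their set parameters, \(\MN \models \lnot\theta(A)\) forces \(\MN' \models \lnot\theta(A),\) contradicting \(\MN' \models \sigma;\) hence \(\WKL + \Ind{\Sigma^0_2} \proves \sigma.\)

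To produce \(\MN'\) I would iterate \(F_\sigma\)-Mathias forcing \(\omega\) times. Put \(\MN_0 = \MN,\) let \(\MN_{n+1} = \MN_n[G_n]\) for an \(F_\sigma\)-Mathias generic \(G_n\) over \(\MN_n\) (available since \(\MN_n\) is countable), and set \(\MN' = \bigcup_n \MN_n.\) By Theorem~\ref{T:PresIS2} every \(\MN_n\) satisfies \(\WKL + \Ind{\Sigma^0_2},\) and since each generic extension keeps the underlying set \(\N,\) the chain \(\MN_0 \subseteq \MN_1 \subseteq \cdots\) is over a fixed first-order part, so \(\MN'\) has the same first-order part as \(\MN.\) I would then verify at the limit that \(\MN' \models \WKL\) (the \(\RCA\) axioms survive the directed union, and any infinite tree of \(\MN'\) occurs in some \(\MN_n \models \WKL\) and gets a path there) and that \(\MN' \models \Ind{\Sigma^0_2}\) (each instance, with its set parameter, occurs in some \(\MN_n\) and retains its truth value because the first-order part is unchanged). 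For \(\MN' \models \COH,\) given a sequence \(\seq{R_i}_{i\in\N}\) coded in \(\MN',\) it is coded already in some \(\MN_n;\) the generic \(G_n\) is \(\Pi^0_1\)-generic for traditional Mathias forcing by Proposition~\ref{P:Traditional}, hence (as observed immediately after that proposition) almost contained in or almost disjoint from every set of \(\MN_n,\) in particular from each \(R_i,\) and since \(G_n\) is infinite and \(G_n \in \MN_{n+1} \subseteq \MN',\) it witnesses the corresponding instance of \(\COH\) in \(\MN'.\) Note that no bookkeeping over the iteration is needed: one generic is simultaneously cohesive for all sets of its ground model, and every sequence of \(\MN'\) already appears at a finite stage.

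The step I expect to demand the most care is the limit stage --- checking that \(\WKL,\) \(\Ind{\Sigma^0_2},\) and \(\COH\) all persist to the direct limit \(\bigcup_n \MN_n,\) and in particular that cohesiveness of \(G_n\) with respect to each \emph{individual} set of \(\MN_n,\) together with infinitude of \(G_n,\) really yields the full cohesiveness clause for an \emph{arbitrary} sequence. The successor-stage preservation is exactly Theorem~\ref{T:PresIS2}, and once the limit stage is settled the \(\Pi^1_1\)-transfer of the first paragraph finishes the argument.
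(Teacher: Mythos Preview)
Your proposal is correct and follows exactly the approach the paper sketches: the paper's entire argument is the sentence preceding the corollary, namely that by Theorem~\ref{T:PresIS2} one can iterate \(F_\sigma\)-Mathias forcing over a model of \WKL\ + \Ind{\Sigma^0_2} to obtain a model of \WKL\ + \COH\ + \Ind{\Sigma^0_2}, and you have simply spelled out the standard details of that iteration and the \(\Pi^1_1\)-transfer.
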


\subsection{Conservation results}

It was observed by Shore that forcing constructions give stronger conservation than one would normally expect for $\omega$-extensions. In the case of \(F_\sigma\)-Mathias forcing, this phenomenon takes the following form.

\begin{theorem}[\WKL\ + \Bnd{\Sigma^0_2}]\label{T:R12Conservativity}
  If \(G\) is generic for \(F_\sigma\)-Mathias forcing then \(\MN[G]\) is a conservative extension of \(\MN\) for \(\Pi^1_2\) statements of the form \[\forall X\,(\phi(X) \lthen \exists Y\psi(X,Y)),\] where \(\phi(X)\) is \(\Sigma^1_1\) and \(\psi(X,Y)\) is \(\Pi^0_2.\)
\end{theorem}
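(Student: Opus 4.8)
The plan is to leverage the witnessing machinery of Section~\ref{S:Witnessing} together with the truth lemmas of Section~\ref{S:Extension}, but now we cannot work in \ACA\ — only \WKL\ + \Bnd{\Sigma^0_2} is available, so the delicate point is that we only have witnessing for \(\Pi^0_2\) sentences (Theorem~\ref{T:Pi2Witnessing}), not for \(\Pi^0_3\). Fix a statement \(\forall X\,(\phi(X)\lthen\exists Y\,\psi(X,Y))\) with \(\phi\) being \(\Sigma^1_1\) and \(\psi\) being \(\Pi^0_2\), and suppose it holds in \(\MN\). Suppose toward the conclusion that \(\MN[G]\models\phi(X_0)\) for some \(X_0\in\MN[G]\); we must produce \(Y_0\in\MN[G]\) with \(\MN[G]\models\psi(X_0,Y_0)\). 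Here \(X_0 = F_0^G\) for some \(G\)-local name \(F_0\); since \(\phi\) is \(\Sigma^1_1\), write \(\phi(X)\equiv\exists Z\,\forall n\,\eta(X,Z,n)\) with \(\eta\) arithmetical (in fact one can take it \(\Pi^0_1\) after the usual normal-form manipulations), and the witness \(Z\) in \(\MN[G]\) is also of the form \(H_0^G\) for a \(G\)-local name \(H_0\).

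First I would argue that there is a condition \((a,A,\mu)\) compatible with \(G\) forcing \(\phi(F_0)\) — more precisely forcing \(\forall n\,\eta(F_0,H_0,n)\), which is a \(\Pi^0_1\) (or at worst \(\Pi^0_2\)) sentence of the forcing language, so this follows from density together with the \(\Pi^0_1\)/\(\Pi^0_2\) truth lemmas (Propositions~\ref{P:Pi1Truth} and~\ref{P:Pi2Truth}): if no condition compatible with \(G\) forced it, genericity would give a condition compatible with \(G\) forcing its negation, contradicting \(\MN[G]\models\phi(X_0)\). Now the crucial move: because \(\MN\) satisfies the statement, \(\MN\) proves — reasoning about the forcing relation internally, which is legitimate since the relevant instances of the forcing relation are arithmetically definable by Propositions~\ref{P:Pi1Forcing} and~\ref{P:Pi1Skolem} — that every condition forcing \(\phi(F_0)\) (in the sense of forcing a \(\Sigma^1_1\) statement about the name, which unwinds to the existence of a name \(H\) with \((a,A,\mu)\forces\forall n\,\eta(F_0,H,n)\)) has an extension forcing \(\exists Y\,\psi(F_0,Y)\). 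Here one exploits that \(\MN\models\WKL+\Bnd{\Sigma^0_2}\) lets us apply Proposition~\ref{P:FunTotal}-style submeasure surgery to make the witnessing name \(H\) local after passing to an extension, so the hypothesis \(\phi(X)\) is genuinely witnessed by a ground-model object relative to that extension.

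Next, having an extension \((b,B,\nu)\) compatible with \(G\) (obtained by density, using that the class of such extensions is \(\Sigma^1_1\) in \(\MN\), hence meets \(\mathcal{G}\)) with \((b,B,\nu)\forces\exists Y\,\psi(F_0,Y)\), I would apply \(\Pi^0_2\) witnessing: since \(\psi\) is \(\Pi^0_2\), the sentence \(\exists Y\,\psi(F_0,Y)\) when \(Y\) is instantiated by a name is \(\Sigma^0_3\), but after extending once more to witness the outer "\(\exists Y\)" by a local name \(F_1\) (as in Corollary~\ref{C:Sigma3Witnessing}, which only needs \RCA), we get a further extension \((c,C,\kappa)\) compatible with \(G\) and a local name \(F_1\) with \((c,C,\kappa)\Sforces\psi(F_0,F_1)\). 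Then Corollary~\ref{C:SkolemTruth} gives \(\MN[G]\models\psi(F_0^G,F_1^G)\), i.e.\ \(Y_0 = F_1^G\) works. Conversely, if \(\MN[G]\models\neg\phi(X_0)\) for all candidate \(X_0\), the implication holds vacuously, and the conservativity of \(\MN[G]\) over \(\MN\) for such statements is then immediate since any counterexample to the statement in \(\MN[G]\) would, by the above, pull back to a counterexample in \(\MN\).

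The main obstacle I anticipate is the second paragraph: carrying out the argument that "\(\MN\) satisfies the statement" implies the density (below any condition forcing \(\phi(F_0)\)) of conditions forcing \(\exists Y\,\psi(F_0,Y)\). This requires running the preservation argument \emph{internally in \(\MN\)} — treating a generic \(G'\) over a countable coded submodel and invoking the truth lemmas there — so that from \(\MN\models\phi(X')\lthen\exists Y\,\psi(X',Y)\) one extracts the needed combinatorial fact about conditions; the use of strict \(\beta\)-submodels (as in Lemma~\ref{L:StarFusion}) together with \Bnd{\Sigma^0_2} should suffice, but keeping the complexity bookkeeping straight — in particular that \(\phi\) being \(\Sigma^1_1\) does not push us past what \WKL+\Bnd{\Sigma^0_2} can handle, which is exactly where the hypothesis that the conclusion \(\psi\) is only \(\Pi^0_2\) (so that \(\Pi^0_2\) witnessing, not \(\Pi^0_3\), is all we need) becomes essential — is the delicate part.
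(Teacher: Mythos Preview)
You have the direction of conservativity backwards. ``$\MN[G]$ is a conservative extension of $\MN$'' means that any statement of the indicated form that holds in $\MN[G]$ already holds in $\MN$; you are instead trying to push the statement \emph{upward} from $\MN$ to $\MN[G]$, which is a different (and, as you correctly sense, much harder) question. The paper's proof goes the other way and is very short: assume the implication holds in $\MN[G]$, take $X\in\MN$ with $\MN\models\phi(X)$, note that $\phi$ is $\Sigma^1_1$ so $\MN[G]\models\phi(X)$ as well, and use the hypothesis to get a $G$-local name $Y$ with $\MN[G]\models\psi(X,Y^G)$. Now Proposition~\ref{P:Pi2Truth} gives a condition $(a,A,\mu)$ compatible with $G$ with $(a,A,\mu)\forces\psi(\check{X},Y)$, and Theorem~\ref{T:Pi2Witnessing} supplies an $(a,A,\mu)$-local Skolem name $W$ with $(a,A,\mu)\forces\psi_S(W;\check{X},Y)$. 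Since $\psi_S$ is essentially $\Pi^0_1$, for \emph{any} $H\in\nbhd{a,A,\mu}\cap\dom(W)\cap\dom(Y)$ (nonempty by locality) we get $\MN\models\psi_S(W^H;X,Y^H)$ and hence $\MN\models\psi(X,Y^H)$, with $Y^H\in\MN$.

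The step you flagged as the ``main obstacle''---transferring the truth of the implication in $\MN$ to a density fact about the forcing relation for an arbitrary name $F_0$---is precisely the step that is \emph{not} needed for the theorem as stated, and indeed does not obviously go through in \WKL\ + \Bnd{\Sigma^0_2}. Your $X_0=F_0^G$ need not lie in $\MN$, so the hypothesis on $\MN$ says nothing directly about it; the internal-submodel manoeuvre you sketch would at best yield a solution in a coded submodel, not in $\MN$ itself, and promoting that to a name whose evaluation at $G$ works in $\MN[G]$ requires exactly the kind of $\Sigma^0_2$-uniformization (Corollary~\ref{C:Sigma2Uniformization}) that is only available under \ACA. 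The remark following the theorem that \COH\ is sharp for $\psi\in\Pi^0_3$ confirms that the intended content is downward reflection, not upward preservation.
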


\begin{proof}
  Suppose that \(X \in \MN\) is such that \(\MN \models \phi(X)\) and \(\MN[G] \models \psi(X,Y^G)\) for some \(G\)-local name \(Y.\) Let \((a,A,\mu)\) be a condition compatible with \(G\) such that \(Y\) is \((a,A,\mu)\)-local and \((a,A,\mu) \forces \psi(\check{X},Y).\) By Theorem~\ref{T:Pi2Witnessing}, there is an \((a,A,\mu)\)-local name \(W\) such that \((a,A,\mu) \forces \psi_S(W,\check{X},Y).\) If \(H \in \nbhd{a,A,\mu} \cap \dom(W) \cap \dom(Y),\) then \(\MN \models \psi_S(W^H,X,Y^H)\) and hence \(\MN \models \psi(X,Y^H).\)  
\end{proof}

\noindent
Note that this is sharp since \COH\ has the above form where \(\psi(X,Y)\) is \(\Pi^0_3\) instead of \(\Pi^0_2.\)

Since the Kleene Normal Form Theorem holds in \ACA, the restricted form of Theorem~\ref{T:R12Conservativity} is no restriction at all.

\begin{corollary}[\ACA]\label{C:P12Conservativity}
  If \(G\) is generic for \(F_\sigma\)-Mathias forcing then \(\MN[G]\) is a conservative extension of \(\MN\) for \(\Pi^1_2\) statements.
\end{corollary}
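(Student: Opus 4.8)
The plan is to reduce an arbitrary \(\Pi^1_2\) sentence to the restricted form treated by Theorem~\ref{T:R12Conservativity}, using the Kleene Normal Form Theorem. Recall that ``conservative extension for \(\Pi^1_2\) statements'' here means: every \(\Pi^1_2\) sentence true in \(\MN[G]\) is already true in \(\MN.\) So let \(\Phi\) be a \(\Pi^1_2\) sentence; contracting tuples of set quantifiers by pairing (routine in \ACA), we may write \(\Phi\) in the form \(\forall X\,\exists Y\,\theta(X,Y)\) with \(\theta\) arithmetical.

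First I would invoke the Kleene Normal Form Theorem inside \ACA: since every arithmetical formula is \ACA-provably equivalent to a \(\Sigma^1_1\) formula in normal form, \(\theta(X,Y)\) is \ACA-equivalent to \(\exists g\,\theta_0(X,Y,g)\) where \(\theta_0\) is a \(\Pi^0_1\) formula. Contracting the two existential set quantifiers \(\exists Y\,\exists g\) into a single quantifier \(\exists Z\) over \(Z = Y\oplus g\) (which \RCA\ justifies via pairing and projection), we obtain \[\ACA \proves \Phi \liff \forall X\,\exists Z\,\psi(X,Z),\] where \(\psi(X,Z) \equiv \theta_0(X,(Z)_0,(Z)_1)\) is \(\Pi^0_1,\) and hence a fortiori \(\Pi^0_2.\) Taking \(\phi(X)\) to be the trivially true formula \(X = X,\) which is (vacuously) \(\Sigma^1_1,\) the sentence \(\forall X\,(\phi(X) \lthen \exists Z\,\psi(X,Z))\) is exactly of the shape covered by Theorem~\ref{T:R12Conservativity} and is \ACA-equivalent to \(\Phi.\)

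To conclude: the displayed equivalence, being provable in \ACA, holds in both \(\MN\) and \(\MN[G]\)---the latter satisfies \ACA\ by Theorem~\ref{T:PresACA}, and \(\MN \models \ACA\) in particular gives \(\MN \models \WKL + \Bnd{\Sigma^0_2},\) so that Theorem~\ref{T:R12Conservativity} is applicable. Thus if \(\MN[G] \models \Phi,\) then \(\MN[G]\) satisfies the rewritten sentence \(\forall X\,(\phi(X) \lthen \exists Z\,\psi(X,Z)),\) whence by Theorem~\ref{T:R12Conservativity} so does \(\MN,\) and therefore \(\MN \models \Phi.\)

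The argument is routine once Theorem~\ref{T:R12Conservativity} is in hand; the one point demanding care---and the reason the corollary is stated over \ACA---is that the normal-form transformation must be carried out inside \ACA\ rather than in the metatheory, so that the equivalence \(\Phi \liff \forall X\,(\phi(X) \lthen \exists Z\,\psi(X,Z))\) is legitimate in the (possibly nonstandard) models \(\MN\) and \(\MN[G];\) this is precisely where the preservation of \ACA\ from Theorem~\ref{T:PresACA} is used.
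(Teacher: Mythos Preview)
Your proof is correct and follows essentially the same approach as the paper, which simply remarks that ``since the Kleene Normal Form Theorem holds in \ACA, the restricted form of Theorem~\ref{T:R12Conservativity} is no restriction at all.'' You have spelled out the details the paper leaves implicit, including the appeal to Theorem~\ref{T:PresACA} to ensure the normal-form equivalence holds in \(\MN[G]\) as well as in \(\MN\).
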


\noindent
This is optimal since there are \(\Pi^1_3\) statements which are forced by iterated \(F_\sigma\)-Mathias forcing.

\subsection{Avoiding cones}

We will now show that the \(F_\sigma\)-Mathias generic can be forced to avoid cones. As an immediate consequence of Theorem~\ref{T:Cone}, we see that the \emph{needed reals}~\cite{Blass} for \(F_\sigma\)-Mathias forcing are precisely the computable reals. This is a sharp contrast with traditional Mathias forcing where the needed reals are precisely the hyperarithmetic reals. We will need the following fact, which is easily derivable using methods of Lawton (cf.~\cite[\S2]{DzhafarovJockusch}).

\begin{lemma}[\ACA]\label{L:Lawton}
  Let \(D = \seq{D_n}_{n=0}^\infty\) be a sequence of sets. For every infinite tree \(T,\) if \(D_n \nleq_T T\) for every \(n,\) then there is a branch \(B \in [T]\) such that \(D_n \nleq_T B\) and \(B' \leq_T T' \oplus D.\)
\end{lemma}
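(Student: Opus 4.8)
The plan is to run the proof of the cone-avoidance low basis theorem of Jockusch and Soare, in the uniform form used by Dzhafarov and Jockusch~\cite[\S2]{DzhafarovJockusch}, relative to suitable oracles, and to check that it formalizes in \ACA\ and yields the stated jump bound. Working inside \ACA\ (so that \(T'\), hence \(T'\oplus D\), exists), I would force with conditions \((\rho,S)\): \(\rho\) a finite binary string, \(S\) a subtree of \(T\) with a recorded \(T\)-recursive index and with \([S]\) infinite, \(\rho\in S\) with infinitely many extensions in \(S\); conditions are ordered by extension of stems and inclusion of trees, starting from \((\langle\rangle,T)\). I would build a descending sequence of conditions meeting, for each \(e\), the requirement \(\mathcal P_e\) (``decide whether \(\Phi_e^B(e)\!\downarrow\)'') and, for each pair \((e,n)\), the requirement \(\mathcal N_{e,n}\) (``\(\Phi_e^B\neq D_n\)''), and set \(B=\bigcup_s\rho_s\); then \(B\in[T]\) since the stems grow along extendible nodes and the trees nest.

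For \(\mathcal P_e\): given \((\rho,S)\), ask \(T'\) whether the \(T\)-recursive tree \(\{\sigma\in S:\sigma\supseteq\rho,\ \Phi^\sigma_{e,|\sigma|}(e)\!\uparrow\}\) is infinite; if so, pass to it, forcing \(\Phi_e^B(e)\!\uparrow\); if not, keep \(S\), which forces \(\Phi_e^B(e)\!\downarrow\) by König's Lemma (available since \ACA\ implies \WKL). The recorded answer is precisely whether \(e\notin B'\). For \(\mathcal N_{e,n}\): process inputs \(x=0,1,2,\dots\) in turn, touching no other requirement meanwhile. At step \(x\) with condition \((\rho,S)\), first ask \(T'\) whether \(\{\sigma\in S:\sigma\supseteq\rho,\ \Phi^\sigma_{e,|\sigma|}(x)\!\uparrow\}\) is infinite and pass to it if so (forcing \(\Phi_e^B(x)\!\uparrow\)); otherwise every branch of \(S\) through \(\rho\) converges on \(x\), so I can compute (recursively in \(T\)) a length \(m\) past which every extension of \(\rho\) in \(S\) has converged on \(x\), use \(T'\) to pick out the extendible nodes among the finitely many \(\sigma\in S\) of length \(m\) extending \(\rho\), and consult \(D_n\) to see whether one of them, \(\sigma^\ast\), has \(\Phi^{\sigma^\ast}_{e,m}(x)\neq D_n(x)\); if so, pass to the subtree of \(S\) through \(\sigma^\ast\) (forcing \(\Phi_e^B(x)\neq D_n(x)\)), and otherwise every branch of \(S\) through \(\rho\) takes value \(D_n(x)\) on input \(x\), and I move on to \(x+1\). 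The key feature is that each of these decisions is a \(\Pi^0_1(T)\) question together with a lookup of finitely much of \(D\): it is decided by \(T'\oplus D\). So the whole construction is recursive in \(T'\oplus D\), whence \(B\le_T T'\oplus D\) and in fact \(B'\le_T T'\oplus D\), since membership of \(e\) in \(B'\) is read directly off the recorded \(\mathcal P_e\)-answer.

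The step I expect to be the crux is showing that the \(\mathcal N_{e,n}\) search always halts --- equivalently, that \(\Phi_e^B\neq D_n\) really can be forced --- and this is the only place the hypothesis \(D_n\nleq_T T\) is used. If the search ran forever, then (the tree \(S\) and stem \(\rho\) being fixed throughout) every branch of \(S\) through \(\rho\) would satisfy \(\Phi_e^B=D_n\); so it is enough to show that if \([S]\) is a nonempty \(\Pi^0_1(T)\) class and \(\Phi_e^B=Z\) for every \(B\in[S]\) with \(Z\) total, then \(Z\le_T T\). To compute \(Z(x)\): of the two \(T\)-recursive trees \(S^v_x=\{\sigma\in S:\Phi^\sigma_{e,|\sigma|}(x)\!\downarrow\Rightarrow\Phi^\sigma_{e,|\sigma|}(x)=v\}\), \(v\in\{0,1\}\), the one with \(v=Z(x)\) has branch space \([S]\neq\varnothing\), hence is infinite, while the other has empty branch space, hence is finite by König's Lemma; so, running the \(T\)-recursive searches ``\(S^0_x\) has no node of length \(\ell\)'' and ``\(S^1_x\) has no node of length \(\ell\)'' in parallel, exactly one terminates, and \(Z(x)\) is the value \(v\) for which the search with parameter \(1-v\) terminates. (This parallel-search device is what lets one avoid \emph{pruning} \(S\), which would have cost an extra jump; it is also the reason the hypothesis needs only \(D_n\nleq_T T\) rather than \(D_n\nleq_T T'\).)

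What remains is routine: each \(\mathcal N_{e,n}\) step is a finite terminating search using only \(T'\) and \(D\); each \(\mathcal P_e\) step is a single \(T'\)-query; one extends the stem to an extendible node between steps using \(T'\); so the construction is a length-\(\omega\) recursion in \(T'\oplus D\), arithmetical in \(T'\oplus D\), hence carried out in \ACA. One then checks that \(B\in[T]\), that \(\Phi_e^B\neq D_n\) for all \(e,n\) (so \(D_n\nleq_T B\)), and that \(B'\le_T T'\oplus D\). The main obstacle in the write-up is the complexity bookkeeping --- verifying that every decision is genuinely a \(\Pi^0_1(T)\) question with only a finite \(D\)-parameter, which is exactly why \(\mathcal N_{e,n}\) is unwound into the per-input search and why the parallel-search form of the auxiliary fact is used --- so that the jump of \(B\) stays at \(T'\oplus D\) and does not climb to \((T\oplus D)'\).
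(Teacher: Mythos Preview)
Your argument is correct and is precisely the Lawton-style cone-avoiding low basis construction that the paper defers to by citing~\cite[\S2]{DzhafarovJockusch}; the paper gives no proof of its own beyond that reference. One cosmetic point: in the auxiliary claim at the end you should apply the parallel-search computation to the subtree \(S_\rho=\{\sigma\in S:\sigma\subseteq\rho\text{ or }\rho\subseteq\sigma\}\) rather than to \(S\), since you only established \(\Phi_e^B=D_n\) for branches through \(\rho\); with that adjustment everything goes through exactly as you describe.
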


\begin{theorem}[\ACA]\label{T:Cone}
  Let \(D = \seq{D_n}_{n=0}^\infty\) be a sequence of non-computable sets. There is a condition \((\varnothing,A,\mu)\) such that \((\varnothing,A,\mu) \forces \forall n\,{D_n \nleq_T G}.\)
\end{theorem}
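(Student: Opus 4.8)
The plan is to build the infinite part $A$ by a fusion-style construction that meets, for every $n$ and every potential Turing reduction $\Phi_e$, a requirement forcing $\Phi_e^G \neq D_n$. Since $G$ will end up being a branch of $U(\varnothing,A)$, and since $\nbhd{\varnothing,A,\mu}$ is a dense $G_\delta$ in $[\tree{\varnothing,A}]$, it suffices to arrange that \emph{every} branch $B \in [\tree{\varnothing,A}]$ satisfies $D_n \nleq_T B$ for all $n$. So the real target is: a single binary tree $T = \tree{\varnothing,A}$ together with a submeasure $\mu$ with $\mu(A) = \infty$, such that no branch of $T$ computes any $D_n$, where $A$ is the set of ``potentially-used'' positions of $T$.

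First I would set up the combinatorics. Work with the standard Mathias tree $\tree{\varnothing,A}$, whose branches are exactly the subsets of $A$; I want to thin $A$ so that this closed class avoids the cones above each $D_n$. The key tool is Lemma~\ref{L:Lawton}: starting from the full tree $2^{<\infty}$ (or from $\tree{\varnothing,A_0}$ for a condition already in hand), and using that each $D_n$ is non-computable (so $D_n \nleq_T \varnothing$), one extracts a branch $B$ with $D_n \nleq_T B$ for all $n$ and $B' \leq_T D := \bigoplus_n D_n$. But a single branch is not a condition; I instead want to carry this out ``with room to spare'' — i.e.\ produce a \emph{perfect} subtree of $2^{<\infty}$ all of whose branches avoid the cones. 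This is where I would invoke the forcing-with-trees version of Lawton's argument: the requirement ``$\Phi_e^X \neq D_n$'' is met on a tree $T$ either by finding an extendible node $\tau \in T$ and an input $k$ on which $\Phi_e^\tau(k){\downarrow} \neq D_n(k)$ — in which case we pass to the subtree $\set{\sigma \in T : \sigma \text{ compatible with } \tau}$ — or by noting that no branch through the relevant part of $T$ has $\Phi_e^X$ total, or that $\Phi_e^X$ is total on a subtree but then it is $T'$-computable, forcing $D_n \leq_T T'$, contrary to hypothesis once we arrange $D_n \nleq_T T'$ at each stage. Iterating over all pairs $(e,n)$ with appropriate bookkeeping (and using \ACA\ to form the jumps controlling the construction, exactly as in Lemma~\ref{L:Lawton}) yields an infinite tree $T_\infty$ whose set of nonzero positions is some infinite $A$, and such that $D_n \nleq_T B$ for every $B \in [T_\infty]$ and every $n$.

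Next I would produce the submeasure. Having $A$ infinite, set $\mu$ to be the submeasure from Proposition~\ref{P:Submeasure} (or just Definition~\ref{D:Lattice}/\ref{P:InfMeet} machinery) for which $\FID(\mu)$ is the smallest free ideal containing the single closed class $\{B : B \subseteq \N \setminus A\}$ — equivalently, $\mu(x) = 0$ if $x \cap A = \varnothing$ and we blow up measure as $|x \cap A|$ grows, e.g.\ $\mu(x) = |x \cap A|$. Then $\mu(A) = \infty$, so $(\varnothing, A, \mu)$ is a genuine $F_\sigma$-Mathias condition, and any $H \in \nbhd{\varnothing,A,\mu}$ satisfies $H \subseteq A$, hence $\chi_H$ is a branch of $\tree{\varnothing,A} = T_\infty$, hence $D_n \nleq_T H$ for all $n$. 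Finally I check this is \emph{forced}: by Proposition~\ref{P:Pi1Forcing} the statement ``$D_n \nleq_T G$'' unwinds, for each fixed $e$, to a $\Pi^0_1$ statement about $\tree{\varnothing,A}$ asserting that $\Phi_e^{\chi_H} \neq D_n$ for all branches $H$ — and this is exactly what the tree construction guarantees — while the outer $\forall n$ is handled levelwise as in Definition~\ref{D:ForcingRelation}. Hence $(\varnothing,A,\mu) \forces \forall n\,D_n \nleq_T G$.

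The main obstacle is making Lawton's lemma do perfect-tree work rather than single-branch work, i.e.\ verifying that the ``$\Phi_e^X$ total on a subtree'' alternative really does give $D_n \leq_T T'$ uniformly enough that the hypothesis $D_n$ non-computable can be propagated through infinitely many stages while keeping $D_n \nleq_T T_s'$ at each stage $s$; this is the standard but delicate bookkeeping in cone-avoidance arguments, and it is precisely the content the paper defers to ``methods of Lawton (cf.~\cite[\S2]{DzhafarovJockusch})'' — so I would cite Lemma~\ref{L:Lawton} in the strengthened perfect-tree form and spend the bulk of the write-up on the translation between that tree and the $F_\sigma$-Mathias condition $(\varnothing,A,\mu)$, which is the genuinely new part.
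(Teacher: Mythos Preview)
Your proposal has a genuine gap at its core: the identification \(T_\infty = \tree{\varnothing,A}\). Perfect-tree cone avoidance, even granting a strengthened Lawton lemma, produces a perfect subtree \(T_\infty \subseteq 2^{<\infty}\) none of whose branches compute any \(D_n\); but such a \(T_\infty\) is essentially never of the form \(\tree{\varnothing,A}\), whose branches are \emph{all} subsets of \(A\). You take \(A\) to be the set of positions where some node of \(T_\infty\) carries a \(1\), but then \([\tree{\varnothing,A}] \supsetneq [T_\infty]\), and the extra subsets of \(A\) you have just picked up are exactly the ones that can compute \(D_n\). Concretely, your \(\mu(x) = |x \cap A|\) turns \((\varnothing,A,\mu)\) into a bare Mathias condition: any infinite \(B \subseteq A\) with any \(\nu \geq \mu\) satisfying \(\nu(B)=\infty\) yields an extension \((b,B,\nu) \leq (\varnothing,A,\mu)\), and nothing in your construction rules out choosing \(B\) so that some \(\Phi_e\) is consistent with \(D_n\) on all of \(\tree{b,B}\) and \((b,B,\nu)\)-local. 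The paper explicitly notes right after the theorem that such coding conditions exist. Your final paragraph also misstates the complexity: ``\(\Phi_e^G \neq D_n\)'' is \(\Sigma^0_2\), not \(\Pi^0_1\), so Proposition~\ref{P:Pi1Forcing} does not dispose of it in the way you describe.

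The paper's proof does not attempt to make every subset of \(A\) cone-avoiding. It builds a fusion sequence \((a_s,A_s,\mu_s)\) in which the submeasure does real work. For each pair \((e,b)\) it forms the submeasure \(\kappa_{e,b}\) whose ideal is generated by the closed class \(\mathcal{C}_{e,b}\) of sets \(C\) on which \(\Phi_e\) is \emph{consistent} over \(\tree{b,C\cup b}\), and then splits into two cases. If \((\mu_s\wedge\kappa_s)(A_s)=\infty\), the submeasure is strengthened to \(\mu_{s+1}=\mu_s\wedge(\kappa_s\vee s)\); this guarantees that no future extension \((b,B,\nu)\leq(\varnothing,A,\mu)\) can have \(B\in\mathcal{C}_s\), so no extension can force \(\Phi_e^G\) total and equal to \(D_n\). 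If instead \((\mu_s\wedge\kappa_s)(A_s)<\infty\), the paper applies Lemma~\ref{L:Lawton} in its stated \emph{single-branch} form to pick one set \(C\in\mathcal{C}_s\cap\nbhd{b_s,A_s,\mu_s}\) with \(D_n\nleq_T C\) for all \(n\) and \(C'\leq_T\varnothing'\oplus D\), and sets \(A_{s+1}=C\cup a_s\); consistency of \(\Phi_e\) on \(\tree{b,C}\) then makes \(\Phi_e^G\), for any \(G\subseteq C\) on which it is total, computable from \(C\) alone, hence \(\neq D_n\). The submeasure component is therefore essential, not decorative; your choice \(\mu=|\cdot\cap A|\) discards precisely the mechanism that makes the argument go through, and no perfect-tree strengthening of Lawton's lemma repairs this.
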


\begin{proof}
  Let \(\Phi_e\) denote the \(e\)-th Turing functional, which can be viewed as partial name. For each \(e\) and \(b \in \fin{\N},\) let \(\kappa_{b,e}\) be the submeasure such that \(\FID(\kappa_{b,e})\) is generated by the \(\Pi^0_1\) class \(\mathcal{C}_{e,b}\) of all \(C \subseteq \N\) such that \[\forall x, \tau_1,\tau_2\,(\tau_1,\tau_2 \in \tree{b,C \cup b} \land \Phi_e^{\tau_1}(x){\downarrow} \land \Phi_e^{\tau_2}(x){\downarrow} \lthen \Phi_e^{\tau_1}(x) = \Phi_e^{\tau_2}(x)).\] Let \(\seq{e_s,b_s}_{s=0}^\infty\) be an enumeration of \(\N\times\fin{\N}\) with infinite repetitions, and write \(\kappa_s\) and \(\mathcal{C}_s\) for \(\kappa_{e_s,b_s}\) and \(\mathcal{C}_{e_s,b_s},\) respectively.

  We define a fusion sequence of conditions \(\seq{a_s,A_s,\mu_s}_{s=0}^\infty\) such that, for each \(s,\) \(A_s' \oplus D \equiv_T \varnothing' \oplus D\) and \(A_s \nleq_T D_n\) for every \(n.\) First, define \((a_0,A_0,\mu_0) = (\varnothing,\N,\sup).\) Then, define the condition \((a_{s+1},A_{s+1},\mu_{s+1})\) as follows.
  \begin{description}
  \item[\normalfont\emph{Case \(a_0 \nsubseteq b_s\) or \(b_s \nsubseteq a_s\)}:]
    Define \(\mu_{s+1} = \mu_s\) and \(A_{s+1} = A_s.\)
  \item[\normalfont\emph{Case \(a_0 \subseteq b_s \subseteq a_s\) and \((\mu_s\wedge\kappa_s)(A_s) = \infty\)}:]
    Define \(\mu_{s+1} = \mu_s\wedge(\kappa_s\vee s)\) and \(A_{s+1} = A_s.\)
  \item[\normalfont\emph{Case \(a_0 \subseteq b_s \subseteq a_s\) and \((\mu_s\wedge\kappa_s)(A_s) < \infty\)}:]
    By applying Lemma~\ref{L:Lawton} to the tree of Lemma~\ref{L:Submeasure:2}, pick (an index for) \(C \in \mathcal{C}_s\cap\nbhd{b_s,A_s,\mu_s}\) such that \(C' \leq_T A_s' \oplus D \equiv_T \varnothing' \oplus D,\) and \(D_n \nleq_T C\) for all \(n.\) Then define \(\mu_{s+1} = \mu_s\) and \(A_{s+1} = C \cup a_s.\) 
  \end{description}
  Finally, pick \(a_s \subseteq a_{s+1} \subseteq A_{s+1}\) such that \(\mu_{s+1}(a_{s+1}) \geq s+1.\) Note that we invariably have \(A_{s+1}' \oplus D \equiv_T \varnothing' \oplus D\) and \(D_n \nleq_T A_{s+1}\) for every \(n.\)

  Once the sequence has been constructed, let \(\mu = \bigwedge_{s=0}^\infty \mu_s\) and \(A = \bigcup_{s=1}^\infty a_s.\) The condition \((\varnothing,A,\mu)\) is as required. To see this, suppose instead that \(\Phi_e^G = D_n,\) where \(G\) is a \(F_\sigma\)-Mathias generic real compatible with \((\varnothing,A,\mu).\) It follows that \(\Phi_e\) is a \(G\)-local name. Let \((b,B,\nu) \leq (\varnothing,A,\mu)\) be a condition compatible with \(G\) such that \(\Phi_e\) is \((b,B,\nu)\)-local and that \((b,B,\nu) \forces \forall x\,{\Phi_e^G(x) = D_n(x)}.\) Let \(s\) be large enough that \(e = e_s\) and \(b = b_s \subseteq a_s.\) Since \(B \in \mathcal{C}_s,\) we must have been in the case \((\mu_s\wedge\kappa_s)(A_s) < \infty\) of the construction. Without loss of generality, we have \(B \subseteq C,\) where \(C\) is the element of \(\mathcal{C}_s\) chosen at stage \(s.\) Now since \(\Phi_e\) is \((b,B,\nu\))-local, for every \(x\) there is a \(\tau \in U(b,B) \subseteq U(b,C)\) such that \(\Phi_e^\tau(x){\downarrow}.\) By choice of \(C,\) for any \(\tau \in U(b,C)\) if \(\Phi_e^\tau(x){\downarrow}\) then \(\Phi_e^\tau(x) = D_n(x).\) This means that \(D_n \leq_T C,\) which contradicts the fact that \(D_n \nleq_T C.\)
\end{proof}

\noindent
However, observe that for every set \(C\) there is a condition \((\varnothing,A,\sup) \forces C \leq_T G,\) namely let \(A = \set{\textstyle2^n + \sum_{i=0}^{n-1} C(i)2^i : n \in \N}.\)

\begin{corollary}[\ACA]\label{C:Cone}
  Let $D = \seq{D_n}_{n=0}^\infty$ be a sequence of non-computable sets. 
  \begin{enumerate}[\upshape(i)]
  \item\label{C:cone:coh} For every sequence \(R = \seq{R_n}_{n=0}^\infty\) there is a \(R\)-cohesive set \(G\) such that \(D_n \nleq_T G\) for every \(n.\)
  \item\label{C:cone:par} \textup{(Seetapun~\cite{SeetapunSlaman}, \(k=2\); Dzhafarov--Jockusch~\cite{DzhafarovJockusch})} For every finite partition \(A_1,\dots,A_k\) of \(\N,\) one of the pieces \(A_i\) contains an infinite set \(G\) such that \(D_n \nleq_T G\) for every \(n.\)
  \item\label{C:cone:hom} \textup{(Seetapun~\cite{SeetapunSlaman})} For every computable coloring \(C:[\N]^2\to\set{1,\dots,k},\) there is an infinite \(C\)-homogeneous set \(H\) such that \(D_n \nleq_T H\) for every \(n.\)
  \end{enumerate}
\end{corollary}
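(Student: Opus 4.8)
The plan is to derive all three parts from Theorem~\ref{T:Cone}, using the condition $(\varnothing,A,\mu)$ it supplies as a starting point and then driving a generic $G$ below it to acquire the desired combinatorial feature; for part~(iii) I would additionally iterate $F_\sigma$-Mathias forcing once, which is legitimate since Theorem~\ref{T:PresACA} gives $\MN[G]\models\ACA$ and each $D_n$ remains non-computable in $\MN[G]$.

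For part~(i) I would start from the condition $(\varnothing,A,\mu)$ provided by Theorem~\ref{T:Cone} for the sequence $D$, and note that for each $n$ the class of conditions $(a,B,\nu)$ with $B\setminus a\subseteq R_n$ or $(B\setminus a)\cap R_n=\varnothing$ is dense: for any $(a,B,\nu)$, subadditivity of $\nu$ forces $\nu(B\cap R_n)=\infty$ or $\nu(B\setminus R_n)=\infty$, and one restricts to the matching piece. A generic $G$ below $(\varnothing,A,\mu)$ meeting all of these classes is then $R$-cohesive, and $D_n\nleq_T G$ for every $n$ by Theorem~\ref{T:Cone}. For part~(ii) I would apply Theorem~\ref{T:Cone} to $D$ and use that $\mu(A)=\infty$ together with $A=\bigcup_{i=1}^{k}(A\cap A_i)$ and $k$-fold subadditivity (valid because $k$ is standard) to obtain an $i$ with $\mu(A\cap A_i)=\infty$; then $(\varnothing,A\cap A_i,\mu)$ is a condition and any generic $G$ below it is infinite, contained in $A_i$, and avoids the cones $D_n$. (For $k=2$ this also follows from part~(i) applied to $R=\seq{A_1}$.)

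For part~(iii) I would first invoke part~(i) for the sequence $R$ enumerating the sets $\set{m:C(n,m)=j}$ with $n\in\N$ and $1\le j\le k$, getting a $G$ that is $R$-cohesive and satisfies $D_n\nleq_T G$ for every $n$. Working inside $\MN[G]\models\ACA$, the limit coloring $\widetilde C(n)=\lim_{m\in G}C(n,m)$ is $\Delta^0_2$ in $G$, hence exists as a set of $\MN[G]$, and $\Bnd{\Sigma^0_2}$ gives a color $j_0$ whose class $P=\set{n\in G:\widetilde C(n)=j_0}$ is infinite. I would then rerun the construction of Theorem~\ref{T:Cone} inside $\MN[G]$ for the sequence $D$, restricted to those conditions $(a,B,\nu)$ whose finite part $a$ is $C$-homogeneous of color $j_0$ and whose infinite part satisfies $B\subseteq P\cap\bigcap_{n\in a}\set{m:C(n,m)=j_0}$. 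This restriction is preserved by the moves of that proof — shrinking the infinite part and refining the submeasure plainly respect it, and when a fresh element $m\in B$ is absorbed into the finite part, the set $\set{m'\in B:C(m,m')\ne j_0}$ that must then be removed from $B$ is finite, hence $\mu$-null, so $\mu$-infinitude survives. A generic $H$ below the resulting condition is infinite, $C$-homogeneous of color $j_0$, and has $D_n\nleq_T H$ for every $n$.

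I expect the main obstacle to be part~(iii): one has to check that the proof of Theorem~\ref{T:Cone}, and in particular its application of Lemma~\ref{L:Lawton} to the tree of Lemma~\ref{L:Submeasure:2}, still runs once the partial order is cut down to the homogeneity-compatible conditions, and that the second round of forcing is carried out correctly inside $\MN[G]$ — which is precisely where Theorem~\ref{T:PresACA} is indispensable. Parts~(i) and~(ii) ought to be routine once Theorem~\ref{T:Cone} is in hand.
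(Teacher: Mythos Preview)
Your arguments for parts~(i) and~(ii) are correct and align with the paper's approach. The paper adds one observation you omit: since \(\MN\) is a \(\beta\)-submodel of \(\MN[G]\) and each instance of (i) or (ii) is a \(\Sigma^1_1\) statement, the witnessing set already exists in \(\MN\), so there is no need for the \(G\) in the conclusion to be generic.

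For part~(iii), the obstacle you flag is a genuine gap, not a technicality. When you rerun the fusion construction of Theorem~\ref{T:Cone} starting from \(A_0 = P\), the Lawton step at each stage requires \(D_n \nleq_T A_s\) (the tree to which Lemma~\ref{L:Lawton} is applied is computable from \(A_s\) together with computable data). But \(P\) is only \(G'\)-computable, and nothing rules out \(D_n \leq_T G'\) --- indeed any \(D_n \leq_T \varnothing'\) would already satisfy this --- so \(D_n \leq_T P\) is entirely possible and the construction stalls at stage~\(0\). Your verification that the homogeneity restriction is preserved by shrinking \(B\), refining \(\nu\), and absorbing elements into \(a\) is fine, but it does not touch this issue.

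The paper takes a shorter route that sidesteps the problem: rather than reopening the internals of Theorem~\ref{T:Cone}, it simply applies part~(ii), relativized to \(G\), to the limit-color partition of \(G\). The relativized hypothesis is precisely \(D_n \nleq_T G\), which part~(i) provides, and the relativized Theorem~\ref{T:Cone} starts its fusion from \(A_0 = G\), where the Lawton invariant \(D_n \nleq_T A_s \oplus G\) does hold. This yields an infinite \(H\) inside one limit-color class with \(D_n \nleq_T H \oplus G\); the homogeneous set one then thins out of \(H\) is computable from \(H \oplus G\) (since \(C\) is computable), so it too avoids every \(D_n\). No second round of forcing or passage to \(\MN[G]\) is needed.
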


\noindent
Note that there is no claim that the set \(G\) is generic. Indeed, the ground model \(\MN\) is a \(\beta\)-submodel of the generic extension \(\MN[G].\) Since every instance of~\eqref{C:cone:coh} and~\eqref{C:cone:par} is a \(\Sigma^1_1\) fact in \(\MN[G],\) it must already be true in \(\MN.\) Part~\eqref{C:cone:hom} is obtained by combining parts~\eqref{C:cone:coh}, to obtain a stable subcoloring of \(C,\) and~\eqref{C:cone:par}, to find an almost homogeneous subset for this stable subcoloring, from which one easily computes a homogeneous set for \(C.\)

\bibliographystyle{amsplain}
\bibliography{fsmathias}

\end{document}